\newtheorem{Thm}{Theorem}[section]
\newtheorem{Lem}{Lemma}[section]
\newtheorem{Rek}{Remark}[section]
\newtheorem{Def}{Definition}[section]
\numberwithin{equation}{section}
\newenvironment{proof}{\medskip\par\noindent{\bf Proof\/}:\quad}{\qquad
\raisebox{-0.5mm}{\rule{1.5mm}{1mm}}\vspace{6pt}}
\begin{document}
\title{Nonrelativistic limit of bound-state solutions for nonlinear Dirac equation on noncompact quantum graphs}
\author{
	{Guangze Gu}\\
	\small Department of Mathematics, Yunnan Normal University, Kunming, China.\\
	\small Yunnan Key Laboratory of Modern Analytical Mathematics and Applications, Kunming, China.\\
	{ Michael Ruzhansky}\\
	\small Department of Mathematics: Analysis, Logic and Discrete Mathematics, Ghent University, Belgium\\
	\small School of Mathematical Sciences, Queen Mary University of London, United Kingdom\\
		{Guoyan Wei}\\
	\small Department of Mathematics, Yunnan Normal University, Kunming, China.\\
	\small Yunnan Key Laboratory of Modern Analytical Mathematics and Applications, Kunming, China.\\
	{Zhipeng Yang}\thanks{Email:yangzhipeng326@163.com}\\
	\small Department of Mathematics, Yunnan Normal University, Kunming, China.\\
    \small Department of Mathematics: Analysis, Logic and Discrete Mathematics, Ghent University, Belgium\\
}
\date{}
\maketitle
\begin{abstract}
In this paper, we investigate the nonrelativistic limit and qualitative properties of bound-state solutions for the nonlinear Dirac equation (NLDE) defined on noncompact quantum graphs:
\[
-i c \frac{d}{d x} \sigma_1 \psi+m c^2 \sigma_3 \psi-\omega \psi=g(|\psi|) \psi, \quad \text { in } \mathcal{G}
\]
where \( g : \mathbb{R}\rightarrow\mathbb{R} \) is a continuous nonlinear function, \( c>0 \) represents the speed of light, \( m>0 \) is the particle's mass, \( \omega\in\mathbb{R} \) is related to the frequency, \( \sigma_1 \) and \( \sigma_3 \) denote the Pauli matrices, and \(\mathcal{G}\) is a noncompact quantum graph. We establish the existence of bound-state solutions to the NLDE on \(\mathcal{G}\), and prove that these solutions converge toward the corresponding bound-state solutions of a nonlinear Schr\"odinger equation (NLS) in the nonrelativistic limit (i.e., as the speed of light \( c \to \infty \)) for particles of small mass. Furthermore, we prove uniform boundedness and exponential decay properties of the NLDE solutions, uniformly in \( c \), thereby offering insight into their asymptotic behavior.
\end{abstract}

\ \ \ \ \ \ \ \ {\bf Keywords:} Nonlinear Dirac equation, Nonrelativistic limit, Noncompact quantum graphs.
\par
\ \ \ \ \ \ \ \ {\bf 2010 AMS Subject Classification:} 35R02; 35Q41; 81Q35.

\section{ Introduction and main results}
The aim of this paper is to investigate the existence of solutions and to analyze in detail the nonrelativistic limit of the nonlinear Dirac equation (NLDE) on a noncompact metric quantum graph $\mathcal{G}$:
\begin{equation}\label{1.1}
-i c \frac{d}{d x} \sigma_1 \psi+m c^2 \sigma_3 \psi-\omega \psi=g(|\psi|) \psi, \quad \text { in } \mathcal{G}
\end{equation}
where $\psi: \mathcal{G} \rightarrow \mathbb{C}^2$ is a spinor field, and $g: \mathbb{R} \rightarrow \mathbb{R}$ is a continuous nonlinearity. Here, $c>0$ denotes the speed of light, $m>0$ is the mass of the particle, and $\omega>0$ is a constant frequency parameter.
\par 
We are particularly interested in the regime where the characteristic velocities of the system are much smaller than the speed of light $c$, so that relativistic effects can be neglected.
The matrices $\sigma_1$ and $\sigma_3$ are the standard Pauli-Dirac matrices, defined by
$$
\sigma_1=\left(\begin{array}{ll}
	0 & 1 \\
	1 & 0
\end{array}\right), \quad \sigma_3=\left(\begin{array}{cc}
	1 & 0 \\
	0 & -1
\end{array}\right) .
$$
\par 
Our analysis focuses on establishing existence results for the stationary equation as well as deriving rigorous nonrelativistic limits.
More precisely, we are concerned with the nonrelativistic limit in the regime where the mass $m$ is small and both $c, \omega \rightarrow \infty$. In this limit, the stationary solution $\psi=(u, v)^T \in \mathbb{C}^2$ of \eqref{1.1} converges to the corresponding solution of a nonlinear Schr\"odinger equation:
\begin{equation}\label{1.2}
-\Delta u-\nu u=2 m g(|u|) u
\end{equation}
where $u: \mathcal{G} \rightarrow \mathbb{C}$ represents the wave function associated with positive energy states, and $\nu<0$ is a constant.
In addition to existence and convergence results, it is natural to investigate whether the solutions of the NLDE \eqref{1.1} exhibit further regularity properties with respect to the speed of light $c$, such as uniform boundedness and exponential decay estimates.

In recent decades, substantial attention has been devoted to the study of the nonlinear Dirac equation and its connection to systems of coupled nonlinear Schr\"odinger equations (NLSE). The NLDE arises as a relativistic model describing self-interacting spinor fields, with applications ranging from quantum field theory to condensed matter physics (see \cite{MR187642,MR1897689}).
Its most general form is
\begin{equation}\label{1.3}
	-i \hbar \partial_t \hat{\psi}=i c \hbar \sum_{k=1}^3 \alpha_k \partial_k \hat{\psi}-m c^2 \sigma_3 \hat{\psi}+G_{\hat{\psi}}(x, \hat{\psi}),
\end{equation}
where $\hat{\psi}$ represents the wave function of the state of an electron, $c$ denotes the speed of light, $m>0$ represents the mass of the generic particle of the system, and $\hbar$ is Planck's constant. Assuming that $G\left(x, e^{i \theta} \hat{\psi}\right)=$ $G(x, \psi)$ for all $\theta \in[0,2 \pi]$, a standing wave solution of \eqref{1.3} is a solution of the form $\hat{\psi}(t, x)=e^{\frac{i \omega t}{h}} \psi(x)$. It is clear that $\psi(t, x)$ solves \eqref{1.3} with $\hbar=1$ if and only if $\psi(x)$ solves the equation \eqref{1.1}.

In a seminal work, Esteban and Séré in \cite{MR1344729} established a variational framework for proving the existence of stationary solutions of the NLDE, laying the foundation for subsequent studies of nonlinear effects in relativistic settings. Following the work \cite{MR1344729}, a large amount of papers have been devoted to the study of existence and decay estimate of nontrivial solutions of the stationary Dirac equation in various different assumptions by variational methods. 
We refer the reader to 
\cite{MR4182313,MR4414162,MR4799303,MR2389415,MR2434900,MR4523529,MR4290374,MR2468430} and their references. 
\par 
In addition, there are many papers about the nonrelativistic limit for different systems.  The nonrelativistic limit arises when the velocity of a particle is significantly lower than the speed of light. In this limit, the influence of special relativity becomes negligible, and classical mechanics suffices to explain the particle’s motion. By considering the nonrelativistic limit, we assume that the particle’s velocity is much smaller than the speed of light. Consequently, the laws of special relativity align with Newtonian physics, and relativistic effects can be disregarded. This simplification allows for a more straightforward and intuitive analysis of the particle’s behavior compared to a full relativistic treatment.
For example, Esteban and Séré \cite{MR1869528} described a limiting process that shows how solutions of Dirac-Fock equations converge toward the corresponding solutions of Hartree-Fock equations when the speed of light tends to infinity. 
Recently, Ding et. all  \cite{MR4728777,MR4280523,MR4610809} studied the nonrelativistic limit and some properties of solutions for the stationary Dirac equation with both local and nonlocal nonlinearities. 
In particular, Borrelli, Carlone and Tentarelli \cite{MR3934110,MR4200758} studied the existence and multiplicity of the bound states to nonlinear Dirac equations on non-compact metric graphs with Kirchhoff-type conditions and they proved that these bound states converge to the bound states of the nonlinear Schrödinger equation in the nonrelativistic limit. 

Motivated by the above papers, the purpose of this paper is to study the relationship of the bound state solutions between stationary Dirac equation \eqref{1.1}  and the nonlinear Schr\"odinger equation \eqref{1.2} on concompact quantum graphs.

We consider the nonrelativistic limit for Dirac equations with more general nonlinearities. Writing $G(|\psi|):=\int_0^{|\psi|} g(s) s d s$, we assume that the nonlinearity satisfies
\begin{itemize}
	\item [$(g_1)$] $g \in C^1(0, \infty)$;
	\item [$(g_2)$] $g(s) \rightarrow 0$ as $s \rightarrow 0$;
	\item [$(g_3)$]There exist $p \in\left(2, \infty\right), C_1>0$ such that $g(s) \leq C_1\left(1+s^{p-2}\right)$;
	\item [$(g_4)$] There exists $\theta>2$ such that $0<\theta \cdot G(s) \leq g(s) s^2$, for all $s>0$;
	\item [$(g_5)$] $\hat{G}(\psi) > 0$ if $\psi\neq 0 ,$ and there are $\xi \in (0,2)$ and $R > 0$ such that $ \hat{G}(\psi) \geq c_1 |\psi|^\xi$ if $|\psi| \geq R $, where $\hat{G}(\psi)=\frac{1}{2}g(|\psi|)\psi^2-G(|\psi|).$ 
\end{itemize}

\begin{Thm}\label{Thm1.1}
Let $ \mathcal{G}$ be a noncompact quantum graph and assumptions $(g_1)-(g_5)$ hold with $2<p<\infty$. Then, for every $\omega \in(-mc^2, mc^2 )$, there exists at least one nontrivial bound-state solution $\psi$ of frequency $\omega$ of \eqref{1.1}. 
\end{Thm}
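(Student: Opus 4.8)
The plan is to recast \eqref{1.1} as a critical point problem for a strongly indefinite functional and to apply a generalized linking theorem of Kryszewski--Szulkin / Bartsch--Ding type. First I would realize the Dirac operator $\mathcal{D}_c := -ic\frac{d}{dx}\sigma_1 + mc^2\sigma_3$ as a self-adjoint operator on $L^2(\mathcal{G},\mathbb{C}^2)$ with the Kirchhoff-type vertex conditions fixed earlier; its spectrum is $(-\infty,-mc^2]\cup[mc^2,+\infty)$, so for $\omega\in(-mc^2,mc^2)$ the operator $\mathcal{D}_c-\omega$ is invertible with $0$ in its spectral gap. I would then work on the form domain $E:=D(|\mathcal{D}_c-\omega|^{1/2})=H^{1/2}(\mathcal{G},\mathbb{C}^2)$, and use the spectral projections of $\mathcal{D}_c-\omega$ to split $E=E^+\oplus E^-$ (both infinite-dimensional), with $\pm(\mathcal{D}_c-\omega)$ positive definite on $E^\pm$. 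Writing $\|\psi\|^2=\||\mathcal{D}_c-\omega|^{1/2}\psi\|_{L^2}^2$ and $\psi=\psi^++\psi^-$, the bound states of \eqref{1.1} are exactly the critical points of
\[
\mathcal{L}(\psi)=\tfrac12\|\psi^+\|^2-\tfrac12\|\psi^-\|^2-\int_{\mathcal{G}}G(|\psi|)\,dx.
\]
By $(g_2)$--$(g_3)$ and the one-dimensional embeddings $H^{1/2}(\mathcal{G})\hookrightarrow L^q(\mathcal{G})$ for all $q\in[2,\infty)$, the functional is well defined and of class $C^1$.

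Next I would check the linking geometry. From $(g_2)$--$(g_3)$ one gets $G(|\psi|)=o(|\psi|^2)$ as $\psi\to0$ together with a subcritical bound, giving constants $\rho,\alpha>0$ with $\mathcal{L}\ge\alpha$ on $\{\psi\in E^+:\|\psi\|=\rho\}$. Since $(g_4)$ forces the superquadratic lower bound $G(s)\gtrsim s^\theta$ with $\theta>2$, fixing $e\in E^+\setminus\{0\}$ I would show $\mathcal{L}\le0$ on the boundary of the set $Q=\{w+se:\ w\in E^-,\ s\ge0,\ \|w+se\|\le R\}$ for $R$ large (on the face $s=0$ this follows from $G\ge0$, guaranteed by $(g_4)$, and on the outer cap from superquadraticity). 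The Kryszewski--Szulkin linking theorem then yields a Cerami sequence $(\psi_n)\subset E$ at a level $\kappa\ge\alpha>0$.

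The two analytic steps are boundedness and compactness of $(\psi_n)$. Boundedness is where $(g_5)$ enters decisively: combining $\mathcal{L}(\psi_n)\to\kappa$ and $\langle\mathcal{L}'(\psi_n),\psi_n\rangle\to0$ gives control of $\int_{\mathcal{G}}\hat G(\psi_n)\,dx$, and the coercive lower bound $\hat G(\psi)\ge c_1|\psi|^\xi$ for $|\psi|\ge R$ together with the Ambrosetti--Rabinowitz relation from $(g_4)$ prevents $\|\psi_n\|\to\infty$; hence $(\psi_n)$ is bounded. Compactness is the main obstacle, because on a noncompact metric graph the embedding $E\hookrightarrow L^q$ is not compact and a bounded Cerami sequence may either vanish or leak mass along the unbounded edges. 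I would rule out vanishing by a Lions-type concentration lemma adapted to graphs: if $\sup_{y\in\mathcal{G}}\int_{B(y,1)}|\psi_n|^2\,dx\to0$ then $\psi_n\to0$ in $L^q$ for every $q>2$, which together with $\langle\mathcal{L}'(\psi_n),\psi_n^{\pm}\rangle\to0$ forces $\kappa=0$, a contradiction.

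Therefore non-vanishing holds, and after possibly translating along a noncompact ray (using the asymptotic half-line structure of $\mathcal{G}$) the shifted sequence converges weakly to some $\psi\neq0$; the weak sequential continuity of $\mathcal{L}'$ then shows $\mathcal{L}'(\psi)=0$, so $\psi$ is the desired nontrivial bound state of frequency $\omega$. The delicate point throughout is precisely the recovery of compactness on the noncompact graph, i.e. excluding vanishing and organizing the translations so that the weak limit survives as a genuine solution; the strong indefiniteness itself is handled cleanly by the linking framework once the gap condition $\omega\in(-mc^2,mc^2)$ secures the splitting $E=E^+\oplus E^-$.
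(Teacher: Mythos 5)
Your proposal follows essentially the same route as the paper: a strongly indefinite variational formulation using the spectral splitting induced by the gap $(-mc^2,mc^2)$, a generalized linking theorem (the paper invokes Theorem 5.1 of Bartsch--Ding) producing a Cerami sequence at a positive level, boundedness of that sequence via $(g_4)$--$(g_5)$ exactly as in the paper's Lemma 3.4, and recovery of a nontrivial critical point through concentration-compactness and translation invariance along the unbounded edges. The only cosmetic difference is that you split with respect to $\mathcal{D}_c-\omega$ while the paper splits with respect to $\mathcal{D}$ and keeps the $-\frac{\omega}{2}\int_{\mathcal{G}}|\psi|^2\,dx$ term explicit; this does not change the argument.
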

\begin{Thm}\label{Thm1.2}
Let $ \mathcal{G}$ be a noncompact quantum graph and assumptions $(g_1)-(g_5)$ hold with $2<p<6$. 
Assume that $\left\{c_n\right\},\left\{\omega_n\right\}$ be two real sequences such that
\begin{equation}\label{1.4}
0<c_n, \omega_n \rightarrow+\infty,
\end{equation}
\begin{equation}\label{1.5}
0<\omega_n<m c_n^2 ,
\end{equation}
\begin{equation}\label{1.6}
\omega_n-m c_n^2 \rightarrow \frac{\nu}{2 m}
\end{equation}
as $n \rightarrow \infty$. If $\psi_n=\left(u_n, v_n\right)^T$ is a sequence of solutions for \eqref{1.1} with frequency $\omega_n$ at speed of light $c_n$, there exists a mass $m_0$, such that for $m \leq m_0$, up to a subsequence,
$$
u_n \rightarrow u \quad \text { and } \quad v_n \rightarrow 0 \quad \text { in } H^1\left(\mathcal{G}, \mathbb{C}^2\right),
$$
as $n \rightarrow \infty$, where $u: \mathcal{G} \rightarrow \mathbb{C}$ is a solution for the NLSE \eqref{1.2} with frequency $\nu<0$. Moreover, 
\begin{itemize}
	\item [$(i)$] the sequence $  \{  \psi_n \}$ is bounded in $ L^{\infty} (\mathcal{G}, \mathbb{C}^2)$ uniformly in $n \in \mathbb{N}$;
	\item [$(ii)$] there exists $C, \tilde{C}>0$ such that
	$$
	\left|\psi_n(x)\right| \leq C e^{-\tilde{C}|x|}, \text { for all } x \in \mathcal{G}
	$$
	uniformly in $n \in \mathbb{N}$.
\end{itemize}
\end{Thm}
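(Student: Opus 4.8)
The plan is to exploit the algebraic structure of the Dirac system written in components, trading the first-order, strongly indefinite problem for an approximate second-order (Schrödinger-type) problem for the upper component $u_n$, while showing the lower component $v_n$ is slaved to $u_n'$ and of order $1/c_n$. Writing $\psi_n=(u_n,v_n)^T$ and inserting $\sigma_1,\sigma_3$, equation \eqref{1.1} at speed $c_n$ and frequency $\omega_n$ becomes the system
\begin{align*}
-ic_n v_n' + (mc_n^2-\omega_n)\,u_n &= g(|\psi_n|)\,u_n,\\
-ic_n u_n' - (mc_n^2+\omega_n)\,v_n &= g(|\psi_n|)\,v_n.
\end{align*}
The second line gives $v_n=-ic_n u_n'/(mc_n^2+\omega_n+g(|\psi_n|))$, so that once $u_n'$ is bounded in $L^2$ the normalisations \eqref{1.4}--\eqref{1.6} force $\|v_n\|_{L^2}=O(1/c_n)$, and (after differentiating and using an $H^2$ bound on $u_n$) also $\|v_n'\|_{L^2}=O(1/c_n)$; hence $v_n\to0$ in $H^1$. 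Substituting this expression into the first line and using $c_n^2/(mc_n^2+\omega_n+g)\to 1/2m$ and $mc_n^2-\omega_n\to-\nu/2m$ yields, formally, $-\tfrac{1}{2m}u''-\tfrac{\nu}{2m}u=g(|u|)u$, i.e. the NLSE \eqref{1.2}. The whole proof consists in making this reduction rigorous, uniformly in $n$.

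The central difficulty, and the step where the small-mass hypothesis $m\le m_0$ is spent, is to bound $\|u_n\|_{H^1}$ and $\|v_n\|_{H^1}$ uniformly in $n$. Since the $\psi_n$ are the bound states furnished by Theorem \ref{Thm1.1}, I would first check that their minimax levels stay bounded as $n\to\infty$ by inserting a fixed test spinor built from a solution of \eqref{1.2}. Because $\psi_n$ is a critical point of the action functional $\mathcal{L}_{c_n}$ associated with \eqref{1.1}, the identity $\mathcal{L}_{c_n}(\psi_n)-\tfrac12\langle \mathcal{L}_{c_n}'(\psi_n),\psi_n\rangle=\int_{\mathcal{G}}\hat G(\psi_n)\,dx$ together with $(g_4)$ bounds $\int_{\mathcal{G}}\hat G(\psi_n)$, and with $(g_5)$ this controls $\psi_n$ in $L^\xi$ away from the origin. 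Pairing the two lines of the system with $\bar u_n$ and $\bar v_n$, integrating over $\mathcal{G}$ (the vertex terms cancelling by the Kirchhoff conditions) and taking real parts produces the identity $(mc_n^2-\omega_n)\|u_n\|_2^2+(mc_n^2+\omega_n)\|v_n\|_2^2=\int_{\mathcal{G}}g(|\psi_n|)(|u_n|^2-|v_n|^2)\,dx$, in which the diverging coefficient $mc_n^2+\omega_n$ reinforces the control of $\|v_n\|_2$, while $(g_2)$--$(g_3)$, the subcritical range $2<p<6$ and a Gagliardo--Nirenberg inequality let me absorb the right-hand side into the gradient terms precisely when $m$ is small. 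I expect this uniform bound to be the main obstacle: the Dirac quadratic form is indefinite and $\mathcal{G}$ is noncompact, so neither coercivity nor compactness is available for free.

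Granting the uniform $H^1$ bound, assertion $(i)$ is immediate from the one-dimensional embedding $H^1(\mathcal{G},\mathbb{C}^2)\hookrightarrow L^\infty(\mathcal{G},\mathbb{C}^2)$, whose constant does not depend on $n$. For the exponential decay $(ii)$ I would work on each unbounded edge, a half-line: by $(g_2)$ and the uniform $L^\infty$ bound there is $R>0$, independent of $n$, beyond which $g(|\psi_n|)$ is arbitrarily small, so the system is a small perturbation of the constant-coefficient linear Dirac system. The decaying solutions of the latter behave like $e^{-\beta_n x}$ with $\beta_n=\sqrt{m^2c_n^4-\omega_n^2}/c_n=\sqrt{(mc_n^2-\omega_n)(mc_n^2+\omega_n)}/c_n\to\sqrt{-\nu}>0$, so the decay rate is bounded below uniformly in $n$. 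A Gronwall/comparison argument for $|\psi_n|$ on $[R,\infty)$ then gives $|\psi_n(x)|\le C e^{-\tilde C|x|}$ with $C,\tilde C$ independent of $n$, which is $(ii)$.

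Finally, the uniform decay $(ii)$ supplies tightness of $\{|\psi_n|^2\}$, exactly what is otherwise missing on the noncompact graph: combined with the uniform $H^1$ bound and local compactness of $H^1\hookrightarrow L^p_{\mathrm{loc}}$, it upgrades the weak limit $u_n\rightharpoonup u$ to strong convergence in $L^p(\mathcal{G})$ for $2\le p<6$, and then, testing the reduced equation against $u_n-u$, to strong convergence in $H^1$. To pass to the limit I would use the weak formulation: testing the first line against $\varphi$, integrating by parts to move the derivative off $v_n'$ onto $\varphi$ and substituting $v_n$ turns the transport contribution into $\int c_n^2 u_n'\,\bar\varphi'/(mc_n^2+\omega_n+g(|\psi_n|))\to\tfrac{1}{2m}\int u'\bar\varphi'$, avoiding any second derivative; the nonlinear term converges by dominated convergence using the uniform $L^\infty$ bound and $(g_2)$--$(g_3)$. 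This identifies $u$ as a solution of the NLSE \eqref{1.2} with $\nu<0$ and completes the proof.
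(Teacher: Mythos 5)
Your overall strategy (componentwise reduction, slaving $v_n$ to $u_n'$ so that $\|v_n\|_{H^1}=O(1/c_n)$, identification of the limit NLSE by testing the reduced equation, and exponential decay via comparison on the half-lines) matches the paper's, but two steps have genuine gaps.

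First, the uniform $H^1$ bound. The integral identity you derive by pairing the two component equations with $\bar u_n$ and $\bar v_n$, namely $(mc_n^2-\omega_n)\|u_n\|_2^2+(mc_n^2+\omega_n)\|v_n\|_2^2=\int_{\mathcal G}g(|\psi_n|)(|u_n|^2-|v_n|^2)\,dx$, is correct, but it contains no gradient terms, so your plan to ``absorb the right-hand side into the gradient terms'' does not parse as stated: you would first have to use the slaving relation to convert $(mc_n^2+\omega_n)\|v_n\|_2^2$ into $\sim\frac{1}{2m}\|u_n'\|_2^2$, and even then the Gagliardo--Nirenberg absorption of $\int g(|\psi_n|)|u_n|^2$ requires an a priori $L^2$ bound on $\psi_n$ that this identity does not provide. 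The paper obtains that $L^2$ (in fact $L^q$) bound separately from the linking-level bound $\Phi_n(\psi_n)\le\max_{\hat Y}\Phi_n\le C$ by testing $d\Phi_n(\psi_n)$ against $\psi_n^+-\psi_n^-$ (Lemma \ref{Lem4.1}), and then gets the gradient bound from the exact identity $\|\mathcal D_n\psi_n\|_{L^2}^2=m^2c_n^4\|\psi_n\|_2^2+c_n^2\|\psi_n'\|_2^2$ combined with Gagliardo--Nirenberg, which is precisely where $p<6$ enters (Lemma \ref{Lem4.2}); no smallness of $m$ is needed there. The small-mass hypothesis is instead spent at the very end, when testing $\mathcal A_n(u_n)-\mathcal B(u)$ against $u_n-u$: the term $2m\int g(|u_n|)u_n(\bar u_n-\bar u)$ is absorbed into $C_1\|u_n-u\|_{H^1}^2$ for $m\le m_0$, which is how the paper gets strong $H^1$ convergence \emph{without} any tightness.

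Second, your route to tightness is circular. You claim that $(g_2)$ together with the uniform $L^\infty$ bound gives an $R$, independent of $n$, beyond which $g(|\psi_n|)$ is small; this is false (a sequence of bumps translated to infinity is bounded in $L^\infty$ and $H^1$ but does not vanish at infinity uniformly). Uniform vanishing of $|\psi_n(x)|$ as $|x|\to\infty$ (the paper's Lemma \ref{Lem4.8}) is deduced \emph{from} the strong $H^1$ convergence $\psi_n\to\psi$, whereas you propose to use the resulting exponential decay to \emph{produce} the tightness that upgrades weak to strong convergence. You must break this loop, e.g.\ as the paper does, by proving strong convergence first via the small-mass absorption. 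Your observation that part $(i)$ follows immediately from the one-dimensional embedding $H^1(\mathcal G,\mathbb C^2)\hookrightarrow L^\infty(\mathcal G,\mathbb C^2)$ once the uniform $H^1$ bound is known is correct and in fact simpler than the paper's $W^{1,r}$ bootstrap, and your computation of the limiting decay rate $\sqrt{m^2c_n^4-\omega_n^2}/c_n\to\sqrt{-\nu}$ agrees with the paper's Kato-inequality/maximum-principle argument.
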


This paper is organized as follows. In the next section, we present some knowledge about metric graphs. In Section 3, we get the existence of solutions for NLDE \eqref{1.1} on  noncompact quantum graphs. 
The proof of uniform boundedness of the solutions for NLDE is obtained in Section 4.
In Section 5, we discuss the asymptotically case and complete all the proof of Theorem \ref{Thm1.2}.
\par 
\textbf{Notation.}~In this paper we make use of the following notations.
\begin{itemize}
	\item[$\bullet$] For $q \in[1,+\infty), q^{\prime}$ denotes the conjugate exponent of $q$, that is, $q^{\prime}=\frac{q}{q-1}$.
	\item[$\bullet$] The usual norm of the Lebesgue spaces $L^{t}(Q)$ for $t \in[1, \infty)$, will be denoted by $|.|{ }_{L^t}$.
	\item[$\bullet$]  $C$ and $C_{i}$ denote (possibly different) any positive constants, whose values are not relevant.
	\item[$\bullet$] If $A \subset G$ is a measurable set, we denote by $|A|$ its Lebesgue measure.
\end{itemize}

\section{ The functional-analytic setting }
In this section we will introduce the metric graph $\mathcal{G}$ and the function space that will work with and some properties that are crucial in our approach, a complete discussion of the definition and the features of metric graphs can be found in \cite{MR3385179,MR3013208,MR2042548} and the references therein.

\subsection{ Metric graphs and functional setting}
Throughout, a metric graph $\mathcal{G}=(V, E)$ is a connected multigraph (i.e., multiple edges and self-loops are allowed) with a finite number of edges and vertices. We indicate the set of edges of the graph with $E=\{e_j\}$ and the set of vertices of the graph with $V= \{ \mathrm{v}_k\}$.
Each edge is a finite or half-infinite segment of line and the edges are glued together at their endpoints (the vertices of $\mathcal{G}$ ) according to the topology of the graph.

Unbounded edges are identified with $\mathbb{R}^{+}=[0,+\infty)$ and are called half-lines, while bounded edges are identified with closed and bounded intervals $I_e=\left[0, \ell_e\right]$, $\ell_e>0$. Each edge (bounded or unbounded) is endowed with a coordinate $x_e$, chosen in the corresponding interval, which has an arbitrary orientation if the interval is bounded, whereas it presents the natural orientation in case of a half-line.

As a consequence, the graph $\mathcal{G}$ is a locally compact metric space, the metric given by the shortest distance along the edges. Clearly, since we assume a finite number of edges and vertices, $\mathcal{G}$ is compact if and only if it does not contain any half-line. A further important notion, introduced in \cite{MR3494248,MR3456809}, is the following.
\begin{Def}\label{Def2.1}
If $\mathcal{G}$ is a metric graph, we define its compact core $\mathcal{K}$ as the metric subgraph of $\mathcal{G}$ consisting of all its bounded edges. In addition, we denote by $\ell$ the measure of $\mathcal{K}$, namely
$$
\ell=\sum_{e \in \mathcal{K}} \ell_e.
$$
\end{Def}
\par 
A function $u: \mathcal{G} \rightarrow \mathbb{C}$ can be regarded as a family of functions $(u_e)$, where $u_e: I_e \rightarrow \mathbb{C}$ is the restriction of $u$ to the edge (represented by) $I_e$. The usual $L^p$ spaces can be defined in the natural way, with norm
$$
\|u\|_{L^p(\mathcal{G} )}^p:=\sum_{e \in E}\left\|u_e\right\|_{L^p\left(I_e\right)}^{p}, \quad \text { if } p \in[1, \infty),
$$
and
$$
\quad\|u\|_{L^{\infty}(\mathcal{G} )}:=\max _{e \in E}\left\|u_e\right\|_{L^{\infty}\left(I_e\right)}.
$$
$H^1(\mathcal{G})$ is the space of functions $u=\left(u_e\right)$ such that $u_e \in H^1\left(I_e\right)$ for every edge $e \in E$, with norm
$$
\|u\|_{H^1(\mathcal{G} )}^2=\left\|u^{\prime}\right\|_{L^2(\mathcal{G})}^2+\|u\|_{L^2(\mathcal{G})}^2.
$$
Consistently, a spinor $\psi=\left(u, v\right)^T: \mathcal{G} \rightarrow \mathbb{C}^2$ is a family of 2-spinors
$$
\psi_e=\binom{u_e}{v_e}: I_e \rightarrow \mathbb{C}^2,\quad \forall e \in E,
$$
and thus
$$
L^p\left(\mathcal{G}, \mathbb{C}^2\right):=\bigoplus_{e \in E} L^p\left(I_e\right) \otimes \mathbb{C}^2,
$$
endowed with the norm
\begin{equation*}
\|\psi_e\|_{L^p(\mathcal{G},\mathbb{C}^2)}^p:=\sum_{e \in \mathrm{E}}\left\|\psi_e\right\|_{L^p\left(I_e\right)}^{p}, \quad \text { if } p \in[1, \infty),
\end{equation*}
and
\begin{equation*}
\|\psi\|_{L^{\infty}(\mathcal{G} ,\mathbb{C}^2)}:=\max _{e \in \mathrm{E}}\left\|\psi_e\right\|_{L^{\infty}\left(I_e\right)}.
\end{equation*}
Moreover,
$$
H^1\left(\mathcal{G}, \mathbb{C}^2\right):=\bigoplus_{e \in \mathrm{E}} H^1\left(I_e\right) \otimes \mathbb{C}^2,
$$
endowed with the norm
$$
\|\psi\|_{H^1\left(\mathcal{G}, \mathrm{C}^2\right)}^2:=\sum_{e \in \mathrm{E}}\left\|\psi_e\right\|_{H^1\left(I_e\right)}^2.
$$
Equivalently, one can say that $L^p\left(\mathcal{G}, \mathbb{C}^2\right)$ is the space of the spinors such that $u, v \in L^p(\mathcal{G})$, with
$$
\begin{aligned}
& \|\psi\|_{L^p\left(\mathcal{G}, \mathcal{C}^2\right)}^p:=\left\|u\right\|_{L^p(\mathcal{G})}^p+\left\|v\right\|_{L ^ p(\mathcal{G})}^p \quad \text { if } p \in[1, \infty), \\
& \|\psi\|_{L^{\infty}\left(\mathcal{G}, \mathrm{C}^2\right)}:=\max \left\{\left\|u\right\|_{L^{\infty}(\mathcal{G})},\left\|v\right\|_{L^{\infty}(\mathcal{G})}\right\},
\end{aligned}
$$
and that $H^1\left(\mathcal{G}, \mathbb{C}^2\right)$ is the space of the spinors such that $u, v \in H^1(\mathcal{G})$, with
$$
\|\psi\|_{H^1\left(\mathcal{G}, \mathcal{C}^2\right)}^2:=\left\|u\right\|_{H^1(\mathcal{G})}^2+\left\|v\right\|_{H^1(\mathcal{G})}^2.
$$
\subsection{ The Dirac operator with Kirchhoff-type conditions}
Let
\begin{equation}\label{2.1}
\mathcal{D}:=-i c\frac{d}{dx}\sigma_1 +m c^2\sigma_3
\end{equation}
denote the Dirac operator. Then \eqref{1.1} is equivalent to the following equation
\begin{equation}\label{2.2}
\mathcal{D}\psi -\omega\psi=g(|\psi|)\psi ,  \text { in } \mathcal{G}.
\end{equation}
The expression given by \eqref{2.1} on a metric graph is purely formal, since it does not clarify what happens at the vertices of the graph, given that the derivative $\frac{d}{d x}$ is well defined just in the interior of the edges.
As for the Laplacian in the Schr\"odinger case, the way to give a rigorous meaning to \eqref{2.1} is to find suitable self-adjoint realizations of the operator. For more details on self-adjoint extensions of the Dirac operator on metric graphs we refer the reader to \cite{MR1050469,MR2459887} . 
\begin{Def}\label{Def2.2}
Let $\mathcal{G}$ be a metric graph and let $m, c>0$. We call the Dirac operator with Kirchhoff-type vertex conditions the operator $\mathcal{D}: L^2\left(\mathcal{G}, \mathbb{C}^2\right) \rightarrow L^2\left(\mathcal{G}, \mathbb{C}^2\right)$ with action
\begin{equation}\label{2.3}
\mathcal{D}_{|I_e} \psi=\mathcal{D}_e \psi_e:=-i c \sigma_1 \psi_e^{\prime}+m c^2 \sigma_3\psi_e ,\quad \forall e \in E,
\end{equation}
and domain
\begin{equation}\label{2.4}
dom(\mathcal{D}):=\left\{\psi \in H^1\left(\mathcal{G}, \mathbb{C}^2\right): \psi \text { satisfies }\eqref{2.5} \text { and }\eqref{2.6}\right\},
\end{equation}
where
\begin{equation}\label{2.5}
u_e(\mathrm{v})=u_f(\mathrm{v}), \quad \forall e, f \succ \mathrm{v} ,  \forall \mathrm{v} \in \mathcal{G} ,
\end{equation}
\begin{equation}\label{2.6}
\sum_{e\succ\mathrm{v}} v_e(\mathrm{v})_{ \pm}=0 , \quad \forall \mathrm{v} \in \mathcal{G},
\end{equation}
$"e \succ \mathrm{v}"$ meaning that the edge $e$ is incident at the vertex $\mathrm{v}$ and $v_e(\mathrm{v})_{ \pm}$standing for $v_e(0)$ or $-v_e\left(\ell_e\right)$ according to whether $x_e$ is equal to 0 or $\ell_e$ at $\mathrm{v} $.
\end{Def}
\begin{Rek}\label{Rek2.1}
Note that the operator $\mathcal{D}$ actually depends of the parameters $m, c$, which represent the mass of the generic particle and the speed of light. For the sake of simplicity we omit this dependence unless it be necessary to avoid misunderstanding.
\end{Rek}
\par 
The basic properties of the operator $\mathcal{D}$  with the above conditions are summarized in the following.
\begin{Lem}\cite{MR3934110}\label{Lem2.1}
The Dirac operator $\mathcal{D}$ introduced by Definition \ref{Def2.2} is self-adjoint on $L^2\left(\mathcal{G}, \mathbb{C}^2\right)$. In addition, its spectrum is
\begin{equation}\label{2.7}
\sigma(\mathcal{D})=\left(-\infty,-m c^2\right] \cup\left[m c^2,+\infty\right).
\end{equation}
\end{Lem}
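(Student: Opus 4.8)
The plan is to separate the statement into the self-adjointness assertion and the spectral identity, handling them with different tools. For self-adjointness I would first establish symmetry by integrating by parts edge-by-edge: writing $\psi=(u,v)^T$ and $\phi=(w,z)^T$ in $H^1(\mathcal{G},\mathbb{C}^2)$, the mass term $mc^2\sigma_3$ is a bounded self-adjoint multiplication and contributes nothing to the boundary form, while a direct computation using $\sigma_1\psi'=(v',u')^T$ gives
$$
\langle\mathcal{D}\psi,\phi\rangle-\langle\psi,\mathcal{D}\phi\rangle=-ic\sum_{e\in E}\Big([v_e\overline{w_e}]+[u_e\overline{z_e}]\Big)_{\partial I_e},
$$
where $[h]_{\partial I_e}=h(\ell_e)-h(0)$. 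Regrouping this sum over the vertices, the continuity condition \eqref{2.5} on the first component lets one factor out the common value and the balance condition \eqref{2.6} on the second component then forces each vertex contribution to vanish (and symmetrically with the roles of the two components reversed); hence $\mathcal{D}$ is symmetric on $\mathrm{dom}(\mathcal{D})$.

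To upgrade symmetry to self-adjointness I would show $\mathrm{dom}(\mathcal{D}^*)\subseteq\mathrm{dom}(\mathcal{D})$. Viewing the endpoint values of $(u,v)$ at a vertex $\mathrm{v}$ of degree $d(\mathrm{v})$ as a $2d(\mathrm{v})$-dimensional boundary space, conditions \eqref{2.5}--\eqref{2.6} impose $(d(\mathrm{v})-1)+1=d(\mathrm{v})$ linear constraints, i.e. exactly half; the display above identifies the relevant boundary symplectic form, and one checks that the subspace cut out by \eqref{2.5}--\eqref{2.6} is Lagrangian for it, so the conditions define a self-adjoint extension. Alternatively, since $mc^2\sigma_3$ is bounded and self-adjoint, Kato--Rellich reduces the whole question to the massless operator $-ic\sigma_1\,d/dx$ with the same (mass-independent) vertex conditions, whose adjoint domain can be computed directly.

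For the spectrum the crucial input is the algebraic identity from the Pauli relations $\sigma_1^2=\sigma_3^2=I$ and $\sigma_1\sigma_3+\sigma_3\sigma_1=0$, giving formally $\mathcal{D}^2=-c^2\,d^2/dx^2+m^2c^4$. I would then identify $\mathcal{D}^2$, on the domain $\{\psi\in\mathrm{dom}(\mathcal{D}):\mathcal{D}\psi\in\mathrm{dom}(\mathcal{D})\}$, with $-c^2\Delta_{\mathcal{K}}+m^2c^4$, where $\Delta_{\mathcal{K}}$ is the nonnegative Kirchhoff Laplacian acting componentwise; this yields the operator inequality $\mathcal{D}^2\ge m^2c^4$. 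Consequently every $\lambda\in\sigma(\mathcal{D})$ obeys $\lambda^2\ge m^2c^4$, so the open gap $(-mc^2,mc^2)$ is spectrum-free (in particular eigenvalue-free), giving $\sigma(\mathcal{D})\subseteq(-\infty,-mc^2]\cup[mc^2,+\infty)$. The reverse inclusion I would get from Weyl sequences: because $\mathcal{G}$ is noncompact it contains a half-line, on which $\mathcal{D}_e\psi_e=\lambda\psi_e$ has explicit oscillatory solutions built from $e^{\pm ikx}$ with $c^2k^2+m^2c^4=\lambda^2$ whenever $|\lambda|\ge mc^2$; cutting these off on supports receding along the half-line produces normalized $\psi_n$ with $\|(\mathcal{D}-\lambda)\psi_n\|\to0$ and $\psi_n\rightharpoonup0$, never feeling the vertex conditions, so that all of $(-\infty,-mc^2]\cup[mc^2,+\infty)$ lies in $\sigma_{\mathrm{ess}}(\mathcal{D})$.

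I expect the main obstacle to be the domain bookkeeping in the spectral step: verifying that squaring the Dirac operator equipped with \eqref{2.5}--\eqref{2.6} reproduces \emph{precisely} the Kirchhoff Laplacian on each spinor component (and not some other self-adjoint realization of $-d^2/dx^2$), since this is exactly what legitimizes the clean bound $\mathcal{D}^2\ge m^2c^4$ and hence the spectral gap. The symmetry/self-adjointness argument and the Weyl-sequence construction are comparatively routine once the boundary form and the half-line normal forms are at hand. This follows the strategy of Borrelli--Carlone--Tentarelli \cite{MR3934110}.
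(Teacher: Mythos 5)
The paper offers no proof of Lemma \ref{Lem2.1}: it is imported verbatim from \cite{MR3934110}, so there is nothing internal to compare against, and your sketch is a correct reconstruction of the standard argument used there (symmetry via the vertex boundary form, self-adjointness via the half-dimensional/Lagrangian count, the spectral gap via $\mathcal{D}^2\ge m^2c^4$, and the reverse inclusion via half-line Weyl sequences with $c^2k^2+m^2c^4=\lambda^2$). The one obstacle you flag is in fact avoidable: the gap only requires $\|\mathcal{D}\psi\|^2=c^2\|\psi'\|^2+m^2c^4\|\psi\|^2$ for $\psi\in\mathrm{dom}(\mathcal{D})$, where the cross term $2mc^3\,\mathrm{Re}\int\langle -i\sigma_1\psi',\sigma_3\psi\rangle\,dx$ reduces to vertex boundary terms that vanish by exactly the same use of \eqref{2.5}--\eqref{2.6} as in your symmetry step, so no identification of $\mathcal{D}^2$ with the Kirchhoff Laplacian is needed.
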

\subsection{The associated quadratic form}
Define the space
\begin{equation}\label{2.8}
Y:=\left[L^2\left(\mathcal{G}, \mathbb{C}^2\right), dom(\mathcal{D})\right]_{\frac{1}{2}},
\end{equation}
namely, the interpolated space of order $\frac{1}{2}$ between $L^2$ and the domain of the Dirac operator. First, we note that $Y$ is a closed subspace of
$$
H^{\frac{1}{2}}\left(\mathcal{G}, \mathbb{C}^2\right):=\bigoplus_{e\in E} H^{\frac{1}{2}}\left(I_e\right) \otimes \mathbb{C}^2,
$$
with respect to the norm induced by $H^{\frac{1}{2}}\left(\mathcal{G}, \mathbb{C}^2\right)$.
 Indeed, $dom(\mathcal{D})$ is clearly a closed subspace of $H^1\left(\mathcal{G}, \mathbb{C}^2\right)$ and there results (arguing edge by edge) that
$$
H^{\frac{1}{2}}\left(\mathcal{G}, \mathbb{C}^2\right)=\left[L^2\left(\mathcal{G}, \mathbb{C}^2\right), H^1\left(\mathcal{G}, \mathbb{C}^2\right)\right]_{\frac{1}{2}},
$$
so that the closedness of $Y$ follows by the very definition of interpolation spaces. As a consequence, by Sobolev   embeddings there results that
\begin{equation}\label{2.9}
Y \hookrightarrow L^p\left(\mathcal{G}, \mathbb{C}^2\right), \quad \forall p \in[2, \infty),
\end{equation}
and that, in addition, the embedding in $L^p\left(\mathcal{K}, \mathbb{C}^2\right)$ is compact, due to the compact ness of $\mathcal{K}$.

On the other hand, there holds (see \cite{MR3934110})
\begin{equation}\label{2.10}
dom\left(Q_{\mathcal{D}}\right)=Y,
\end{equation}
and hence the form domain inherits all the properties pointed out before, which are in fact crucial in the rest of the paper.

Finally, for the sake of simplicity, we denote throughout the form domain by $Y$, in view of \eqref{2.10}, and
$$
Q_{\mathcal{D}}(\psi)=\frac{1}{2} \int_{\mathcal{G}}\langle\psi, \mathcal{D} \psi\rangle d x
 \quad \text { and } \quad 
 Q_{\mathcal{D}}(\psi, \varphi)=\frac{1}{2} \int_{\mathcal{G}}\langle\psi, \mathcal{D} \varphi\rangle d x,
$$
with $\langle\cdot, \cdot\rangle$ denoting the euclidean sesquilinear product of $\mathbb{C}^2$, since this does not give rise to misunderstanding. In particular, as soon as $\psi$ and/or $\varphi$ are smooth enough the previous expressions gain an actual meaning as Lebesgue integrals.

\subsection{The preliminary results}
We first define $\Phi: Y \rightarrow \mathbb{R}$ by
\begin{equation}\label{2.11}
	\begin{aligned}
		\Phi(\psi)= &  \frac{1}{2} \int_{\mathcal{G}}\left\langle\psi, {\mathcal{D}} \psi\right\rangle d x-\frac{\omega}{2} \int_{\mathcal{G}}|\psi|^2 d x- \int_{\mathcal{G}}G(| \psi|) d x \\
		=&   \frac{1}{2} \int_{\mathcal{G}}\left\langle\psi,( {\mathcal{D}} -\omega)\psi\right\rangle d x- \int_{\mathcal{G}}G(| \psi|) d x.
	\end{aligned}
\end{equation}
Recall that (as $c=1$ ) the spectrum of $\mathcal{D}$ is given by
\begin{equation}\label{2.12}
	\sigma(\mathcal{D})=(-\infty,-m] \cup[m,+\infty).
\end{equation}
The first point is to prove that the solutions coincide with the critical points of the $\mathbb{C}^2$ action functional (see \cite{MR3934110}).
\begin{Lem}\label{Lem2.2}
	A spinor $\psi$ is a solution of frequency $\omega$ of \eqref{2.2} if and only if it is a critical point of $\Phi$.
\end{Lem}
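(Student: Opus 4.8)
The plan is to identify the Euler--Lagrange equation $\Phi'(\psi)=0$ with the weak (form-level) version of \eqref{2.2}, and then to upgrade that weak identity to the genuine operator equation on $\mathrm{dom}(\mathcal{D})$. First I would verify that $\Phi\in C^1(Y,\mathbb{R})$. The quadratic part $\psi\mapsto\frac12\int_{\mathcal{G}}\langle\psi,(\mathcal{D}-\omega)\psi\rangle\,dx = Q_{\mathcal{D}}(\psi)-\frac{\omega}{2}\|\psi\|_{L^2}^2$ is a continuous quadratic form on $Y=\mathrm{dom}(Q_{\mathcal{D}})$ by \eqref{2.10}, hence smooth, and its derivative in a direction $\varphi$ is $\mathrm{Re}\int_{\mathcal{G}}\langle(\mathcal{D}-\omega)\psi,\varphi\rangle\,dx$ (read through the form $Q_{\mathcal{D}}$). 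For the nonlinear term, assumptions $(g_1)$--$(g_3)$ combined with the embeddings \eqref{2.9} show that $\psi\mapsto\int_{\mathcal{G}}G(|\psi|)\,dx$ is of class $C^1$ on $Y$; using $\frac{d}{ds}G(s)=g(s)s$ and $\frac{d}{dt}|\psi+t\varphi|\big|_{t=0}=|\psi|^{-1}\mathrm{Re}\langle\psi,\varphi\rangle$, its derivative has density $g(|\psi|)\,\mathrm{Re}\langle\psi,\varphi\rangle$, where $(g_2)$ guarantees that $g(|\psi|)\psi$ extends continuously through $\psi=0$, removing the apparent singularity at the origin.

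Combining the two contributions gives
$$
\Phi'(\psi)[\varphi]=\mathrm{Re}\int_{\mathcal{G}}\big\langle(\mathcal{D}-\omega)\psi-g(|\psi|)\psi,\,\varphi\big\rangle\,dx,\qquad\forall\,\varphi\in Y.
$$
Since $Y$ is a complex space while $\Phi$ is real-valued, I would exploit phase invariance: testing with both $\varphi$ and $i\varphi$ recovers the full complex pairing, so $\Phi'(\psi)=0$ is equivalent to $\int_{\mathcal{G}}\langle(\mathcal{D}-\omega)\psi-g(|\psi|)\psi,\varphi\rangle\,dx=0$ for every $\varphi\in Y$. This is precisely the weak formulation of \eqref{2.2} at the level of the form domain.

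The last and most delicate step is the passage from this weak identity to the operator equation. Setting $f:=\omega\psi+g(|\psi|)\psi$, the growth bound $(g_3)$ together with \eqref{2.9} shows $f\in L^2(\mathcal{G},\mathbb{C}^2)$, and the identity reads $Q_{\mathcal{D}}(\psi,\varphi)=\int_{\mathcal{G}}\langle f,\varphi\rangle\,dx$ for all $\varphi$ in the form domain $Y$. Because $\mathcal{D}$ is self-adjoint by Lemma \ref{Lem2.1} and $Y=\mathrm{dom}(Q_{\mathcal{D}})$ by \eqref{2.10}, the representation theorem for the quadratic form of a self-adjoint operator yields $\psi\in\mathrm{dom}(\mathcal{D})$ with $\mathcal{D}\psi=f$, that is $\mathcal{D}\psi-\omega\psi=g(|\psi|)\psi$; in particular $\psi$ automatically satisfies the Kirchhoff-type vertex conditions \eqref{2.5}--\eqref{2.6} built into $\mathrm{dom}(\mathcal{D})$. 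The converse is immediate: any solution of \eqref{2.2} lies in $\mathrm{dom}(\mathcal{D})\subset Y$, and pairing the equation with test spinors reproduces $\Phi'(\psi)=0$. I expect the main obstacle to be exactly this form-to-operator upgrade, since it is where the abstract self-adjointness and the precise description of the form domain must be invoked to recover simultaneously the differential equation in the interior of the edges and the vertex conditions; by contrast, the $C^1$ regularity and the computation of $\Phi'$ are routine once the growth hypotheses are married to the Sobolev embeddings \eqref{2.9}.
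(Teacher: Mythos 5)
Your proposal is correct, but it follows a genuinely different route from the paper. The paper's proof of the hard direction (critical point $\Rightarrow$ solution) is entirely concrete: it tests $d\Phi(\psi)=0$ against spinors of the form $(\varphi^1,0)^T$ and $(0,\varphi^2)^T$ with $\varphi^i\in C_0^\infty(I_e)$ to recover the differential system edge by edge, and then against test spinors with prescribed boundary values at each vertex to extract, via integration by parts, the continuity condition \eqref{2.5} and the Kirchhoff-type condition \eqref{2.6} separately. You instead pass through the abstract machinery: from the weak identity $Q_{\mathcal{D}}(\psi,\varphi)=\int_{\mathcal{G}}\langle \omega\psi+g(|\psi|)\psi,\varphi\rangle\,dx$ for all $\varphi\in Y$, restricting to $\varphi\in\mathrm{dom}(\mathcal{D})$ and using self-adjointness gives $\psi\in\mathrm{dom}(\mathcal{D}^*)=\mathrm{dom}(\mathcal{D})$ with $\mathcal{D}\psi=\omega\psi+g(|\psi|)\psi$, so the vertex conditions come for free because they are encoded in $\mathrm{dom}(\mathcal{D})$. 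This is shorter and arguably cleaner, and it also makes explicit the $C^1$ regularity of $\Phi$ and the real-versus-complex issue (testing with $\varphi$ and $i\varphi$), both of which the paper glosses over. What the paper's computation buys in exchange is an explicit verification of \emph{which} vertex conditions the critical points satisfy, which does not rely on the identification $Y=\mathrm{dom}(Q_{\mathcal{D}})$ and the operator-theoretic input imported from \cite{MR3934110}. One small caveat on your wording: since $\mathcal{D}$ is not semibounded, Kato's representation theorem in its usual form does not apply; the correct (and simpler) justification of your form-to-operator upgrade is just the definition of the adjoint, namely that $\int\langle\psi,\mathcal{D}\varphi\rangle\,dx=\int\langle f,\varphi\rangle\,dx$ for all $\varphi\in\mathrm{dom}(\mathcal{D})$ with $f\in L^2$ forces $\psi\in\mathrm{dom}(\mathcal{D}^*)=\mathrm{dom}(\mathcal{D})$ and $\mathcal{D}\psi=f$. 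With that substitution your argument is complete.
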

\begin{proof}
First, we prove a solution of \eqref{2.2} is a critical point of $\Phi$. For every $\varphi \in Y =dom(Q_\mathcal{D})$,
assume $\psi$ is a solution of \eqref{2.2}, then
\begin{equation*}
	\int_{I_e}  \langle(\mathcal{D} -\omega)\psi_e , \varphi_e \rangle dx 
	- \int_{I_e}g(|\psi_e |)\langle  \psi_e ,\varphi_e\rangle d x=0 , \quad \forall e \in E.
\end{equation*}
Since $ \mathcal{D} -\omega$ is a self-adjoint operator, we have
\begin{equation*}
	\int_{I_e}  \langle \psi_e ,(\mathcal{D} -\omega)\varphi_e \rangle dx 
	- \int_{I_e}g(|\psi_e |)\langle\psi _e , \varphi_e \rangle d x=0 , \quad \forall e \in E.
\end{equation*}
Thus, $\psi_e$ is a critical point of $\Phi$. By the arbitrariness of $e$, $\psi $ is a critical point of $\Phi$.
\par 
Next, we prove the converse. Assume that $\psi$ is a critical point of $\Phi$, namely,
\begin{equation}\label{2.13}
	\langle d \Phi(\psi) , \varphi\rangle=\int_{\mathcal{G}}\langle\psi,(\mathcal{D}-\omega) \varphi\rangle d x-\int_{\mathcal{G}}g(|\psi|)\langle\psi, \varphi\rangle d x=0,\quad  \forall \varphi \in Y.
\end{equation}
Now, for any fixed edge $e \in E$, if one chooses
\begin{equation}\label{2.14}
	\varphi=\binom{\varphi^1}{0} \quad \text { with } \quad 0 \neq \varphi^1 \in C_0^{\infty}\left(I_e\right),
\end{equation}
(namely, $\varphi^1$ possesses the sole component $\varphi_e^1$, which is a test function of $I_e$ ). 
By \eqref{2.13}, for every $ e \in E$, $ \varphi_e=\binom{\varphi_e^1}{0}$,
we have
\begin{align*}
	\int_{I_e}\langle \psi_e,(\mathcal{D}-\omega)\varphi_e   \rangle d x_e
	=&\int_{I_e}(m-\omega)u_e\bar{\varphi}_e^1 +i v_e(\bar{\varphi}_e^1)^{\prime} d x_e\\
	= & \int_{I_e}g(| \psi_e|) \langle  \psi_e ,\varphi_e \rangle d x_e\\
	= & \int_{I_e}g(| \psi_e|)u_e \bar{\varphi}_e^1  d x_e ,
\end{align*}
that is
\begin{equation*}
	\int_{I_e}-i v_e(\bar{\varphi_e^1})^{\prime} dx_e=\int_{I_e}\underbrace{
		\left [ (m-\omega)u_e-g(|\psi_e|)u_e        \right]}_{\in L^2\left(I_e\right)} \bar{\varphi}_e^1dx_e.
\end{equation*}
Similarly, if one choose 
$$
\varphi=\binom{0}{\varphi^2} \quad \text { with } \quad 0 \neq \varphi^2 \in C_0^{\infty}\left(I_e\right),
$$
then, for every $ e \in E$, $ \varphi_e=\binom{0}{\varphi_e^2} $, we obtain 
\begin{align*}
	\int_{I_e}\langle \psi_e,(\mathcal{D}-\omega)\varphi_e   \rangle d x_e
	=&\int_{I_e}-i u_e(\bar{\varphi}_e^2)^{\prime}+(m+\omega)v_e\bar{\varphi}_e^2  d x_e\\
	= & \int_{I_e}g(| \psi_e|) \langle  \psi_e ,\varphi_e \rangle d x_e\\
	= & \int_{I_e}-g(| \psi_e|)v_e \bar{\varphi}_e^2  d x_e ,
\end{align*}
then
$$
\int_{I_e}i u_e(\bar{\varphi_e^2})^{\prime} dx_e=\int_{I_e}
\left [ (m+\omega)v_e +g(|\psi_e|)v_e        \right] \bar{\varphi_e^2}dx_e.
$$
so that $u_e,v_e \in H^1\left(I_e\right)$ and an integration by parts yields \eqref{2.2}. 
\par 
It is then left to prove that $\psi$ fulfills \eqref{2.5} and \eqref{2.6}. First, fix a vertex $\mathrm{v}$ of $V$ and choose
$$
\operatorname{dom}(\mathcal{D}) \ni \varphi=\binom{\varphi^1}{0} \quad \text { with } \quad \varphi^1(\mathrm{v})=1, \quad \varphi\left(\mathrm{v}^{\prime}\right)=0 ,\quad \forall \mathrm{v}^{\prime} \in V, \quad\mathrm{v}^{\prime} \neq \mathrm{v},
$$
by the definition of $\operatorname{dom}(\mathcal{D})$, we have
\begin{equation*}
	\varphi_e^1(\mathrm{v})= \varphi_f^1(\mathrm{v}) =1,\quad \forall e ,f \succ \mathrm{v}.
\end{equation*}
Restrict \eqref{3.3} to each edge $e$, we get
\begin{equation*}
	\int_{\cup_{e \in E}}\langle\psi_e,(\mathcal{D}-\omega)\varphi_e    \rangle dx_e
	=  \int_{\cup_{e \in E}}g(|\psi_e|)\langle\psi_e,\varphi_e \rangle dx_e.
\end{equation*}
Substituting into $ \mathcal{D}$ and integrating ,we obtain
\begin{equation*}
	\int_{\cup_{e \in E}}\left[ (m-\omega)u_e\bar{\varphi}_e^1 +i v_e(\bar{\varphi}_e^1)^{\prime}   \right]dx_e
	= \int_{\cup_{e \in E}}g(|\psi_e|)u_e\bar{\varphi}_e^1dx_e.
\end{equation*}
Combining with $ \varphi_e^1(\mathrm{v})= \varphi_f^1(\mathrm{v}) =1, \varphi(\mathrm{v}^{\prime})=0, \forall e, f \succ \mathrm{v},\mathrm{v} \neq \mathrm{v}^{\prime}$, one has
\begin{align*}
	& \int_{\cup_{e \in E}}\left[ (m-\omega)u_e\bar{\varphi}_e^1 +i v_e(\bar{\varphi}_e^1)^{\prime}   \right]dx_e \\
	&=\int_{\cup_{e \succ \mathrm{v} }}\left[ (m-\omega)u_e\bar{\varphi}_e^1 +i v_e(\bar{\varphi}_e^1)^{\prime}   \right]dx_e
	+\int_{\cup_{e \succ \mathrm{v}^{\prime}}}\left[ (m-\omega)u_e\bar{\varphi}_e^1 +i v_e(\bar{\varphi}_e^1)^{\prime}   \right]dx_e \\
	& =\int_{\cup_{e \succ \mathrm{v} }}\left[ (m-\omega)u_e\bar{\varphi}_e^1 +i v_e(\bar{\varphi}_e^1)^{\prime}   \right]dx_e.
\end{align*}
Similarly,
\begin{equation*}
	\int_{\cup_{e \in E}}g(|\psi_e|)u_e\bar{\varphi}_e^1dx_e
	=\int_{\cup_{e \succ \mathrm{v} }}g(|\psi_e|)u_e\bar{\varphi}_e^1dx_e.
\end{equation*}
Thus, we have 
\begin{equation*}
	\int_{\cup_{e \succ \mathrm{v} }}i v_e(\bar{\varphi}_e^1)^{\prime}dx_e
	=\int_{\cup_{e \succ \mathrm{v} }}  (\omega-m)u_e\bar{\varphi}_e^1 dx_e+\int_{\cup_{e \succ \mathrm{v} }}g(|\psi_e|)u_e\bar{\varphi}_e^1dx_e.
\end{equation*}
Multiply  both sides of the first row of  \eqref{2.2} multiplied by $\bar{\varphi}_e^1$ and integrate, we get
\begin{equation*}
	\int_{\cup_{e \succ \mathrm{v} }}-i (v_e)^{\prime}\bar{\varphi}_e^1+(m-\omega_n)u_e\bar{\varphi}_e^1 dx_e
	=\int_{\cup_{e \succ \mathrm{v} }}g(|\psi_e|)u_e\bar{\varphi}_e^1dx_e.
\end{equation*}
Combining the above formula ,there is
\begin{align*}
	&\int_{\cup_{e \succ \mathrm{v} }}-i (v_e)^{\prime}\bar{\varphi}_e^1dx_e\\
	= &\int_{\cup_{e \succ \mathrm{v} }}  i v_e(\bar{\varphi}_e^1)^{\prime}dx_e \\
	= &i v_e\bar{\varphi}_e^1|_{\sum_{e\succ \mathrm{v}}} - \int_{\cup_{e \succ \mathrm{v} }}i (v_e)^{\prime}\bar{\varphi}_e^1dx_e, 
\end{align*}
that is
$
i v_e\bar{\varphi}_e^1|_{\sum_{e\succ \mathrm{v}}}=0
$.
Since $ \varphi^1(\mathrm{v})=1$, we obtain
$$
\sum_{e \succ \mathrm{v}}  v_e(\mathrm{v})_{ \pm}=0,
$$
one has $v$ satisfies \eqref{2.6}.
\par 
On the other hand, let $\mathrm{v}$ be a vertex of $V$ with degree greater than or equal to $2$ (for vertices of degree 1, \eqref{2.5} is satisfied for free). Moreover, let
$$
dom(\mathcal{D}) \ni \varphi=\binom{0}{\varphi^2} \quad \text {with} \quad\varphi_{e_1}^2(\mathrm{v})_{ \pm}=-\varphi_{e_2}^2(\mathrm{v})_{ \pm}, \quad \varphi_e^2(\mathrm{v})=0 ,\quad \forall e \neq e_1, e_2,
$$
where $e_1$ and $e_2$ are two edges incident at $\mathrm{v}$, and $\varphi_e^2 \equiv 0$ on each edge not incident at $\mathrm{v}$. Substituting $\mathcal{D}$ into \eqref{3.3}, we have
\begin{equation*}
	\int_{\mathcal{G} }iu_e(\bar{\varphi}_e^2)^{\prime}dx_e
	=  \int_{\mathcal{G} }(m+\omega)v_e\bar{\varphi}_e^2dx_e +\int_{\mathcal{G} }g(|\psi_e|)v_e\bar{\varphi}_e^2.
\end{equation*}
Multiply both sides of the second row of \eqref{2.2} multiplied by $ \bar{\varphi}_e^2$ and integrate, we get
\begin{equation*}
	\int_{\mathcal{G} }-i(u_e)^{\prime}\bar{\varphi}_e^2dx_e
	- \int_{\mathcal{G} }(m+\omega)v_e\bar{\varphi}_e^2dx_e 
	=\int_{\mathcal{G} }g(|\psi_e|)v_e\bar{\varphi}_e^2.
\end{equation*}
Combining the above formula, there is
\begin{align*}
	\int_{\mathcal{G} }iu_e(\bar{\varphi}_e^2)^{\prime}dx_e
	= & \int_{\mathcal{G} }-i(u_e)^{\prime}\bar{\varphi}_e^2dx_e \\
	= & -i u_e \bar{\varphi}_e^2|_{\mathcal{G}} + \int_{\mathcal{G} }iu_e(\bar{\varphi}_e^2)^{\prime}dx_e,
\end{align*}
there results
\begin{equation*}
	\sum_{e \in E}i u_e \bar{\varphi}_e^2=0.
\end{equation*}
Let $ E=E_1\cup E_2$, $ E_1=  \{ e\in E : e\succ \mathrm{v} \}$ and $ E_2=  \{ e\in E : e\succ \mathrm{v}^{\prime} \}$, $\mathrm{v}\neq \mathrm{v}^{\prime}$, then
\begin{equation*}
	\sum_{e \in E}i u_e \bar{\varphi}_e^2
	=\sum_{e\in E_1}i u_e \bar{\varphi}_e^2
	+\sum_{e\in E_2}i u_e \bar{\varphi}_e^2
	=\sum_{e\in E_1}i u_e \bar{\varphi}_e^2=0.
\end{equation*}
Thus, we have 
$$
u_{e_1}(\mathrm{v})\bar{\varphi}_{e_1}^2(\mathrm{v})_{ \pm}
+u_{e_2}(\mathrm{v})\bar{\varphi}_{e_2}^2(\mathrm{v})_{ \pm}=0.
$$
Since $\varphi_{e_1}^2(\mathrm{v})_{ \pm}=-\varphi_{e_2}^2(\mathrm{v})_{ \pm} $, we get
\begin{align*}
	u_{e_1}(\mathrm{v})\bar{\varphi}_{e_1}^2(\mathrm{v})_{ \pm}
	= -u_{e_2}(\mathrm{v})\bar{\varphi}_{e_2}^2(\mathrm{v})_{ \pm}
	= u_{e_2}(\mathrm{v})\bar{\varphi}_{e_1}^2(\mathrm{v})_{ \pm},
\end{align*}
thus
$$
u_{e_1}(\mathrm{v})=u_{e_2}(\mathrm{v}).
$$
Then, repeating the procedure for any pair of edges incident at $\mathrm{v}$ one gets \eqref{2.5}.
Finally, iterating the same arguments on all the vertices we conclude the proof.
\end{proof}
\par 
Recall that according to \eqref{3.2} we can decompose the form domain $Y$ as the orthogonal sum of the positive and negative spectral subspaces for the operator $\mathcal{D}$, i.e.,
$$
Y=Y^{+} \oplus Y^{-}.
$$
As a consequence, every $\psi \in Y$ can be written as $\psi=P^{+} \psi+P^{-} \psi=: \psi^{+}+\psi^{-}$, where $P^{ \pm}$are the orthogonal projectors onto $Y^{ \pm}$. In addition one can find an equivalent  norm for $Y$, i.e.,
\begin{equation}\label{2.15}
	\|\psi\|:=\|\sqrt{|\mathcal{D}|} \psi\|_{L^2} ,\quad \forall \psi \in Y.
\end{equation}
Combined with the above conclusion, we have the following lemma:
\begin{Lem}\label{Lem2.3}
	$mc^2\|\psi\|_2^2 \leq  \|\psi\|^2 $.
\end{Lem}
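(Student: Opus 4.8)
The plan is to read off the estimate directly from the spectral description of $\mathcal{D}$ provided in Lemma \ref{Lem2.1}, combined with the functional calculus for self-adjoint operators. The starting observation is that, by \eqref{2.7}, the spectrum $\sigma(\mathcal{D})=(-\infty,-mc^2]\cup[mc^2,+\infty)$ avoids the open interval $(-mc^2,mc^2)$; in particular every $\lambda\in\sigma(\mathcal{D})$ satisfies $|\lambda|\geq mc^2$. Consequently the nonnegative self-adjoint operator $|\mathcal{D}|$ has spectrum contained in $[mc^2,+\infty)$, which is the quantitative form of the spectral gap I would exploit.

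Next I would unfold the definition \eqref{2.15} of the norm. Since $\sqrt{|\mathcal{D}|}$ is self-adjoint and $\psi\in Y=\mathrm{dom}(Q_{\mathcal{D}})=\mathrm{dom}(\sqrt{|\mathcal{D}|})$ by \eqref{2.10}, the quantity $\|\psi\|^2=\|\sqrt{|\mathcal{D}|}\,\psi\|_{L^2}^2$ is well defined and equals $\langle|\mathcal{D}|\psi,\psi\rangle_{L^2}$. Writing $\{E_\lambda\}$ for the spectral resolution of $\mathcal{D}$ and $\mu_\psi(\cdot):=\langle E_{(\cdot)}\psi,\psi\rangle_{L^2}$ for the associated finite positive spectral measure, the functional calculus gives
\begin{equation*}
\|\psi\|^2=\int_{\sigma(\mathcal{D})}|\lambda|\,d\mu_\psi(\lambda).
\end{equation*}

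I would then insert the pointwise lower bound $|\lambda|\geq mc^2$, valid on all of $\sigma(\mathcal{D})$, to obtain
\begin{equation*}
\|\psi\|^2=\int_{\sigma(\mathcal{D})}|\lambda|\,d\mu_\psi(\lambda)\geq mc^2\int_{\sigma(\mathcal{D})}d\mu_\psi(\lambda)=mc^2\,\|\psi\|_{2}^2,
\end{equation*}
where the final equality uses $\int d\mu_\psi=\langle\psi,\psi\rangle_{L^2}=\|\psi\|_2^2$. This is precisely the claimed inequality $mc^2\|\psi\|_2^2\leq\|\psi\|^2$.

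I do not anticipate a genuine obstacle: the only point requiring a little care is ensuring that $\psi$ lies in $\mathrm{dom}(\sqrt{|\mathcal{D}|})$ so that every quantity above is finite, and this is guaranteed by \eqref{2.10} since the statement is formulated for $\psi\in Y$. Everything else is a direct reading, through the spectral theorem, of the inclusion $\sigma(|\mathcal{D}|)\subseteq[mc^2,+\infty)$ forced by the spectral gap in \eqref{2.7}.
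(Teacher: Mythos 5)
Your proof is correct and follows essentially the same route as the paper: both arguments reduce the claim to the operator inequality $|\mathcal{D}|\geq mc^2$ forced by the spectral gap in \eqref{2.7}, and then evaluate $\|\psi\|^2=\|\sqrt{|\mathcal{D}|}\,\psi\|_{L^2}^2=\langle|\mathcal{D}|\psi,\psi\rangle_{L^2}$. Your version merely makes explicit, via the spectral measure, the step that the paper writes informally as $\int_{\mathcal{G}}|\mathcal{D}|\,|\psi|^2\,dx\geq mc^2\|\psi\|_2^2$.
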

\begin{proof}
	By \eqref{2.7} and \eqref{2.15} ,
	\begin{align*}
		\|\psi\|^2
		= & (\sqrt{|\mathcal{D}|} \psi,\sqrt{|\mathcal{D}|} \psi)_{L^2} \\
		= & \int_{\mathcal{G}}|\mathcal{D}||\psi|^2 dx \\
		\geq & mc^2\|\psi\|_2^2.
	\end{align*}
	Thus , we have
	\begin{equation}\label{2.16}
		mc^2\|\psi\|_2^2 \leq  \|\psi\|^2 , \quad \forall \psi \in Y.
	\end{equation}
\end{proof}
\par 
Furthermore, similar to Proposition 2.1 in \cite{MR3023429}, this decomposition of $Y$ also induces a natural decomposition of $L^q\left(\mathcal{G}, \mathrm{C}^2\right)$, hence there is $c_q>0$ such that
\begin{equation}\label{2.17}
	c_q\left\|\psi^{ \pm}\right\|_q^q \leq\|\psi\|_q^q ,\text { for all } \psi \in Y.
\end{equation}
Using the spectral theorem, the action functional \eqref{2.11} can be rewritten as follows:
\begin{equation}\label{2.18}
	\Phi(\psi)=\frac{1}{2}\left(\left\|\psi^{+}\right\|^2-\left\|\psi^{-}\right\|^2\right)-\frac{\omega}{2} \int_{\mathcal{G}}|\psi|^2dx- \int_{\mathcal{G}}G(|\psi|) d x,
\end{equation}
which is the best form in order to prove that $\Phi$ has in fact a linking geometry (see \cite{MR2431434} section II.8).
\par 
Let $V$ be the space of the spinors
$$
\eta=\binom{\eta^1}{0} \text { where } \eta^1 \in \mathcal{C}_0^{\infty}(\mathcal{G} ,\mathbb{C}^2).
$$
which is clearly a subset of $Y$. Moreover, a simple computation shows that
\begin{equation}\label{2.19}
	\begin{aligned}
		\int_{\mathcal{G} }\left\langle\eta, \mathcal{D} \eta\right\rangle d x 
		& =\int_{\mathcal{G} } 
		\left\langle
		\binom{\eta^1}{0},-i c\frac{d}{dx}\sigma_1 \binom{\eta^1}{0} +m c^2 \sigma_3\binom{\eta^1}{0}
		\right\rangle d x \\
		& =\int_{\mathcal{G}} \left\langle \binom{\eta^1}{0},\binom{mc^2\eta^1}{0}\right\rangle dx  \\
		& =mc^2\int_{\mathcal{G}}|\eta^1| ^2 dx.
	\end{aligned}
\end{equation}

\begin{Lem}\label{Lem2.4}
	There exists $ C > 0$ such that 
	\begin{equation*}
		G(|\psi|) \geq  C |\psi|^{\theta} -C |\psi|^2.
	\end{equation*} 
\end{Lem}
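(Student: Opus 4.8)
The plan is to reduce the claimed inequality to a differential inequality for the primitive $G$ and then exploit the resulting super-$\theta$ growth. Writing $t=|\psi|$ and regarding $G(t)=\int_0^t g(s)s\,ds$ as a function of the single scalar variable $t\geq 0$, the fundamental theorem of calculus gives $G'(t)=g(t)t$, so that hypothesis $(g_4)$ rewrites as the autonomous differential inequality $tG'(t)\geq \theta G(t)$ for every $t>0$, together with the strict positivity $G(t)>0$ (both of which are contained in $0<\theta G(s)\leq g(s)s^2$).

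Next I would observe that this differential inequality forces $t\mapsto G(t)/t^\theta$ to be nondecreasing on $(0,\infty)$. Indeed, since $G(t)>0$ one may compute $\frac{d}{dt}\log\bigl(G(t)/t^\theta\bigr)=\frac{G'(t)}{G(t)}-\frac{\theta}{t}=\frac{tG'(t)-\theta G(t)}{tG(t)}\geq 0$. Evaluating this monotone quotient against the reference point $t=1$ then yields the pointwise lower bound $G(t)\geq G(1)\,t^\theta$ for all $t\geq 1$, where $G(1)>0$.

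Finally I would patch the two regimes and set $C:=G(1)$. For $t\geq 1$ the monotonicity bound gives $G(t)\geq C t^\theta\geq C t^\theta - C t^2$. For $0<t\leq 1$, since $\theta>2$ we have $t^\theta\leq t^2$, so that $C t^\theta - C t^2\leq 0< G(t)$; the value $t=0$ is trivial since $G(0)=0$. Combining the two ranges produces $G(|\psi|)\geq C|\psi|^\theta - C|\psi|^2$ for every $\psi$, as claimed.

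I expect the only genuine point to watch is the derivation and use of the monotonicity of $G(t)/t^\theta$: this is precisely the Ambrosetti–Rabinowitz mechanism by which the superquadraticity hypothesis $(g_4)$ upgrades to a power-type lower bound, and the strict positivity of $G$ (also furnished by $(g_4)$) is what makes the logarithmic derivative legitimate. Once that monotonicity is in place, the splitting into $0<t\leq 1$ and $t\geq 1$ is routine, the subtracted term $-C|\psi|^2$ being exactly what absorbs the small-$t$ region where the pure power bound $C|\psi|^\theta$ would otherwise be too large.
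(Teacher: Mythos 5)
Your proof is correct and follows essentially the same route as the paper: integrating the differential inequality from $(g_4)$ (equivalently, the monotonicity of $G(t)/t^\theta$) to get the power lower bound for $t\geq 1$, and then using positivity of $G$ together with $t^\theta\leq t^2$ to absorb the region $t\leq 1$. If anything, your version is slightly more careful than the paper's, which states the bound $G(|\psi|)\geq C_1|\psi|^\theta$ without restricting to $|\psi|\geq 1$ and writes the integration step somewhat loosely.
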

\begin{proof}
	By $(g_4)$, we can get
	$$ 
	\frac{g(|\psi|)|\psi|}{G(|\psi|)}\geq \frac{\theta}{|\psi|}.
	$$  
	Integrating over $\mathcal{G}$, we obtain
	$$ \int_{\mathcal{G}} \frac{1}{G(|\psi|)} d G(|\psi|) \geq  \int_{\mathcal{G}} \frac{\theta}{|\psi|} d|\psi|,
	$$
	that is
	\begin{equation}\label{2.20}
		G(|\psi|)  \geq C_1 |\psi|^{\theta}.
	\end{equation}
	where $\theta>2$. In addition, for $|\psi|\geq 1 $ we have
	\begin{equation*}
		G(|\psi|)  \geq C_1 |\psi|^{\theta} \geq C_1 |\psi|^{\theta} -C |\psi|^{2}.
	\end{equation*}
	When $|\psi|\leq 1 $, we have
	\begin{align*}
		|G(|\psi|)| 
		=  &  \left| \int_0^{|\psi|} g(s)s d s    \right|  
		\leq    \int_0^{|\psi|} \left|  C_2 ( s+s^{p-1})  \right|d s \\
		= &  C_2 \left(  \frac{|\psi|^2}{2} +\frac{1}{p}|\psi|^p     \right)
		\leq   C_2  \left(  \frac{|\psi|^2}{2} +\frac{1}{p}|\psi|^2    \right)\\
		\leq & C_3|\psi|^2.
	\end{align*} 
	Set $ C = max\{ {C_1 ,C_3}\}$, we obtain 
	\begin{equation}\label{2.21}
		G(|\psi|) \geq  C |\psi|^{\theta} -C |\psi|^2.
	\end{equation}
\end{proof}

Finally, we recall the following Gagliardo-Nirenberg inequalities \cite{MR4438617}, which will be used later.
\begin{Lem}\label{Lem2.5}
	Let $\mathcal{G}$ be connected and non-compact with finitely many edges. For every $2 \leq q \leq \infty$
	there exists a constant $C_q >0$ that depends on $q$ such that
	\begin{equation}\label{2.22}
		\| \psi\|_{L_q} \leq C_q \| \psi\|_{L^2} ^{\frac{1}{2}+\frac{1}{q}}\| \psi ^\prime\|_{L^2} ^{\frac{1}{2}-\frac{1}{q}}.
	\end{equation}
\end{Lem}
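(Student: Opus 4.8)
The plan is to derive the inequality from its critical endpoint $q=\infty$ by Hölder interpolation, so that the only graph-specific work is concentrated in the $L^\infty$-bound; all the branching difficulty lives there. Before that I would reduce the vector statement to a scalar one. Setting $w:=|\psi|$, one has $w^2=|u|^2+|v|^2$ and, on the interior of each edge, $w\,w'=\mathrm{Re}(\bar u u'+\bar v v')$, whence $|w'|\le|\psi'|$ a.e. (with $w'=0$ a.e. on $\{w=0\}$). Since $\|w\|_{L^q(\mathcal{G})}=\|\psi\|_{L^q(\mathcal{G})}$ and $\|w\|_{L^2(\mathcal{G})}=\|\psi\|_{L^2(\mathcal{G})}$, it suffices to prove the scalar inequality for the nonnegative function $w$, which inherits continuity across the vertices from $\psi\in H^1(\mathcal{G})$, and to control $\|w'\|_{L^2}\le\|\psi'\|_{L^2}$ at the end.

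The first and essential step is the endpoint estimate $\|w\|_{L^\infty(\mathcal{G})}^2\le 2\,\|w\|_{L^2(\mathcal{G})}\|w'\|_{L^2(\mathcal{G})}$. Fix $x_0\in\mathcal{G}$. Because $\mathcal{G}$ is connected and non-compact with finitely many edges, it contains at least one half-line, and one can choose a simple path $\gamma$ issuing from $x_0$ and running to infinity along such a half-line, traversing each edge at most once. Parametrizing $\gamma$ by arclength $s\in[0,\infty)$, the composition $s\mapsto w(\gamma(s))$ is absolutely continuous (here the continuity of $w$ at the vertices is used) and tends to $0$ as $s\to\infty$ (since $w$ restricted to the terminal half-line lies in $H^1$ of the half-line). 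The fundamental theorem of calculus then gives $w(x_0)^2=-\int_0^\infty\frac{d}{ds}w(\gamma(s))^2\,ds\le 2\int_\gamma w\,|w'|\,ds$, and Cauchy–Schwarz together with the fact that $\gamma\subset\mathcal{G}$ yields $w(x_0)^2\le 2\,\|w\|_{L^2(\mathcal{G})}\|w'\|_{L^2(\mathcal{G})}$. Taking the supremum over $x_0$ proves the claim.

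For $2\le q<\infty$ I would then interpolate directly: $\|w\|_{L^q}^q=\int_{\mathcal{G}}w^{q-2}\,w^2\le\|w\|_{L^\infty}^{q-2}\|w\|_{L^2}^2$, and substituting the endpoint bound gives $\|w\|_{L^q}^q\le 2^{(q-2)/2}\|w\|_{L^2}^{(q+2)/2}\|w'\|_{L^2}^{(q-2)/2}$. Taking $q$-th roots and rewriting the exponents as $\frac{q+2}{2q}=\frac12+\frac1q$ and $\frac{q-2}{2q}=\frac12-\frac1q$ yields exactly \eqref{2.22} with the explicit constant $C_q=2^{\,1/2-1/q}$; the case $q=2$ is the trivial identity $\|w\|_{L^2}\le\|w\|_{L^2}$.

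The step I expect to be the genuine obstacle is the endpoint estimate, and within it the vertex bookkeeping: one must guarantee that the function is continuous at the vertices so that $s\mapsto w(\gamma(s))$ is absolutely continuous along the branching path $\gamma$, and that decay to $0$ at infinity is genuinely available on the terminal half-line. This is precisely where connectedness and non-compactness are indispensable — on a bounded interval alone the inequality fails for constants — and it is why the statement is restricted to non-compact graphs. Everything downstream of the endpoint is routine Hölder interpolation and exponent arithmetic.
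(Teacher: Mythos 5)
The paper does not prove this lemma at all: it is quoted verbatim from the cited reference \cite{MR4438617}, so there is no internal argument to compare against. Your proof is the standard self-contained one from the metric-graph literature, and it is correct: the reduction to $w=|\psi|$ with $|w'|\le|\psi'|$ a.e., the endpoint bound $\|w\|_{L^\infty}^2\le 2\|w\|_{L^2}\|w'\|_{L^2}$ obtained by integrating $\tfrac{d}{ds}w(\gamma(s))^2$ along a simple path $\gamma$ from $x_0$ out to infinity along a half-line (which exists precisely because $\mathcal{G}$ is connected, non-compact and has finitely many edges, and which covers each edge at most once so that Cauchy--Schwarz on $\gamma$ is dominated by the norms on $\mathcal{G}$), and the interpolation $\|w\|_{L^q}^q\le\|w\|_{L^\infty}^{q-2}\|w\|_{L^2}^2$ with the exponent arithmetic $\tfrac{q+2}{2q}=\tfrac12+\tfrac1q$, $\tfrac{q-2}{2q}=\tfrac12-\tfrac1q$. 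The alternative route in the literature goes through the decreasing rearrangement onto the half-line and a P\'olya--Szeg\H{o} inequality; your path argument is more elementary and gives an explicit constant $C_q=2^{1/2-1/q}$, at no loss of generality for this statement.

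One caveat deserves emphasis, and you correctly identify it as the crux. Your argument needs $w=|\psi|$ to be continuous at the vertices, and the inequality is genuinely false without some vertex condition: a spinor equal to a nonzero constant on one bounded edge and zero elsewhere has $\psi'=0$ edge-wise but $\|\psi\|_{L^\infty}>0$. The paper's literal definition of $H^1(\mathcal{G},\mathbb{C}^2)$ is the edge-wise direct sum $\bigoplus_e H^1(I_e)\otimes\mathbb{C}^2$ with no vertex conditions, under which the lemma as stated would fail; it is valid under the standard convention (used in the cited source) that $H^1(\mathcal{G})$ incorporates continuity at the vertices. Note also that where the paper later applies the lemma to Dirac bound states, the second component $v$ satisfies only the Kirchhoff-type sum condition \eqref{2.6} rather than continuity, so $|\psi|$ need not be continuous at vertices there; that is a gap in the paper's application rather than in your proof of the stated inequality, but it is worth being aware that your argument does not cover that case as written.
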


\section{Existence of bound-state solutions of NLDE}
In this section we prove the existence of bound-state solutions of nonlinear Dirac equation on noncompact quantum graphs via variational methods. Note that since the parameter $c$ here does not play any role, we set $c=1$ throughout the section. In addition, in what follows (unless stated otherwise) we always tacitly assume that the mass parameter $m$ is positive, the frequency $\omega\in$ $(-m, m)$, and $\mathcal{G}$ is a noncompact quantum graph. 

\begin{Lem}\label{Lem3.1}
We assume that $(g_1)-(g_5)$ are satisfied. Then there exist constants $\rho, r^*>0$ such that 
$$\kappa:=\inf \Phi\left(\partial B_\rho \cap Y^{+}\right) \geq r^*>0,$$
where $B_\rho=\{\psi \in Y:\|\psi\| \leq \rho\}$.
\end{Lem}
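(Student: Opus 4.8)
The plan is to verify that $\Phi$ exhibits the standard mountain-pass (local linking) geometry on the positive spectral subspace $Y^{+}$: there the quadratic part is coercive while the nonlinearity is superquadratic, and this alone forces a positive threshold on the sphere $\partial B_\rho \cap Y^{+}$. Throughout I work with $c=1$, so $\sigma(\mathcal{D})=(-\infty,-m]\cup[m,+\infty)$ by \eqref{2.12}, and I fix $\psi\in\partial B_\rho\cap Y^{+}$, i.e. $\psi=\psi^{+}$ with $\psi^{-}=0$ and $\|\psi\|=\rho$. On such $\psi$ the representation \eqref{2.18} collapses to
\[
\Phi(\psi)=\frac{1}{2}\|\psi\|^2-\frac{\omega}{2}\|\psi\|_2^2-\int_{\mathcal{G}}G(|\psi|)\,dx.
\]

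First I would establish coercivity of the quadratic part. By Lemma \ref{Lem2.3} (with $c=1$) one has $m\|\psi\|_2^2\le\|\psi\|^2$, i.e. $\|\psi\|_2^2\le\frac{1}{m}\|\psi\|^2$. Distinguishing the sign of $\omega$ and using the spectral-gap condition $\omega\in(-m,m)$, this produces a constant $\alpha:=\frac{1}{2}\min\{1,\,1-\omega/m\}>0$ with
\[
\frac{1}{2}\|\psi\|^2-\frac{\omega}{2}\|\psi\|_2^2\ge\alpha\|\psi\|^2;
\]
indeed, if $\omega\ge 0$ one discards $\omega\|\psi\|_2^2$ via the bound above, while if $\omega<0$ the term $-\frac{\omega}{2}\|\psi\|_2^2$ is already nonnegative. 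This is the one place where the strict inequality $\omega<m$ is essential to keep $\alpha>0$.

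Next I would control the nonlinearity from above. Combining $(g_2)$ (so $g(s)\to0$ as $s\to0$) with the growth bound $(g_3)$, for every $\varepsilon>0$ there is $C_\varepsilon>0$ such that $g(s)\le\varepsilon+C_\varepsilon s^{p-2}$ for all $s>0$; multiplying by $s\ge0$ and integrating $G(t)=\int_0^t g(s)s\,ds$ gives $G(t)\le\frac{\varepsilon}{2}t^2+\frac{C_\varepsilon}{p}t^p$, hence
\[
\int_{\mathcal{G}}G(|\psi|)\,dx\le\frac{\varepsilon}{2}\|\psi\|_2^2+\frac{C_\varepsilon}{p}\|\psi\|_p^p.
\]
The continuous embedding $Y\hookrightarrow L^p(\mathcal{G},\mathbb{C}^2)$ from \eqref{2.9} (valid since $p\in(2,\infty)$) supplies $\|\psi\|_p\le C_p\|\psi\|$, and Lemma \ref{Lem2.3} again bounds $\|\psi\|_2^2\le\frac{1}{m}\|\psi\|^2$.

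Putting these estimates together yields
\[
\Phi(\psi)\ge\Big(\alpha-\frac{\varepsilon}{2m}\Big)\|\psi\|^2-\tilde{C}\|\psi\|^p.
\]
Choosing $\varepsilon$ so small that $\alpha-\frac{\varepsilon}{2m}\ge\frac{\alpha}{2}$ leaves $\Phi(\psi)\ge\|\psi\|^2\big(\frac{\alpha}{2}-\tilde{C}\|\psi\|^{p-2}\big)$. Since $p>2$, the bracketed factor is strictly positive once $\|\psi\|=\rho$ is small; fixing such a $\rho$ and setting, say, $r^*:=\frac{\alpha}{4}\rho^2$ gives $\inf\Phi(\partial B_\rho\cap Y^{+})\ge r^*>0$. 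I do not anticipate a genuine obstacle here, as this is the classical verification of a positive mountain-pass level; the only point demanding care is the sign bookkeeping in the coercivity step, where $\omega<m$ must be invoked to guarantee $\alpha>0$.
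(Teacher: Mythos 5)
Your proof is correct and follows essentially the same route as the paper: write $\Phi$ on $\partial B_\rho\cap Y^{+}$ via \eqref{2.18}, bound the quadratic part from below using Lemma \ref{Lem2.3} and $\omega\in(-m,m)$, bound $\int_{\mathcal{G}}G(|\psi|)\,dx$ from above via the growth conditions and the embedding \eqref{2.9}, and take $\rho$ small. The one substantive difference is that you use $(g_2)$ together with $(g_3)$ to get $g(s)\le\varepsilon+C_\varepsilon s^{p-2}$ (the paper's own estimate \eqref{4.1}, used later in Lemma \ref{Lem4.1}), so the quadratic contribution of $G$ enters with an arbitrarily small coefficient $\varepsilon/(2m)$; the paper instead absorbs a fixed constant $C/(mc^2)$ into the bracket $\frac{1}{2}-\frac{\omega}{2mc^2}-\frac{C}{mc^2}-C_1\rho^{p-2}$, whose positivity is not justified when $C$ is large relative to $m$ — your $\varepsilon$-splitting is the cleaner way to close that step.
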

\begin{proof}
By \eqref{1.5},\eqref{2.9}, \eqref{2.16} and $(g_4)$, for every
$ \psi \in \partial B_\rho \cap Y^+ = \{ \psi \in Y^+ :\|\psi\|=\rho\}$ and $\rho$ is small enough, we have
\begin{align*}
	\Phi(\psi) 
	& = \frac{1}{2} \|\psi^+\|^2- \frac{1}{2} \|\psi^-\|^2-\frac{\omega}{2} \int_{\mathcal{G}}|\psi|^2 d x- \int_{\mathcal{G}}G(|\psi|) d x          \\
	& =\frac{1}{2} \|\psi^+\|^2-\frac{\omega}{2} \|\psi^+\|_2^2- \int_{\mathcal{G}}G(|\psi|) d x             \\
	& \geq \frac{1}{2} \|\psi^+\|^2-\frac{\omega}{2mc^2}  \|\psi^+\|^2
	- C \int_{\mathcal{G}}\left( |\psi^+|^2+  |\psi^+|^p    \right) dx\\
	& = \frac{1}{2} \|\psi^+\|^2-\frac{\omega}{2mc^2}  \|\psi^+\|^2
	- C \left(\|\psi^+\|_2^2+  \|\psi^+\|_p^p    \right) \\  
	& \geq \frac{1}{2} \|\psi^+\|^2-\frac{\omega}{2mc^2}  \|\psi^+\|^2 
	- \frac{C}{mc^2}\|\psi^+\|^2 -C_1\|\psi^+\|^p    \\
	& =\rho^2(\frac{1}{2} - \frac{\omega}{2mc^2} -\frac{C}{mc^2}- C_1\rho^{p-2 }  )\\
	&> 0.
\end{align*}
Thus, $\kappa:=\inf \Phi\left(\partial B_\rho \cap Y^{+}\right) \geq r^*>0$.	
\end{proof}

\par 
By assumptions \eqref{1.4}, \eqref{1.5}, \eqref{1.6}, and $m>0$, we have
\begin{equation}\label{3.1}
	0<C_1 \leq m c_n^2-\omega_n \leq C_2.
\end{equation}
Obviously there exists a $\bar{\gamma}>\gamma_0:=m c_n^2-\omega_n$, which is independent on $n$, such that $V_0:=V \cap\left(Y_{\bar{\gamma}}-Y_{\gamma_0}\right) L^2 \neq \emptyset$, where $\left(Y_\gamma\right)_{\gamma \in \mathbb{R}}$ denote the spectral families of $\mathcal{D}$. We can choose a element $e^{+} \in V_0 \subset Y^{+}$(independent on $n$) with $\left\|e^{+}\right\|=1$. 
By lemma \ref{Lem2.3}, we have
\begin{equation*}
	\|\psi^+\|^2
	\geq (mc_n^2-\omega_n)\|\psi^+\|_{L^2}^2=\gamma_0\|\psi^+\|_{L^2}^2,
\end{equation*}
and there exists $\bar{\gamma} > \gamma_0 $ such that $\|\psi^+\|^2 \leq \bar{\gamma}\|\psi^+\|_{L^2}^2 . $
\par 
Then, we have
\begin{equation}\label{3.2}
	\gamma _0\left\|\psi^{+}\right\|_{L^2}^2 \leq\left\|\psi^{+}\right\|^2 \leq \bar{\gamma}\left\|\psi^{+}\right\|_{L^2}^2, \quad \text { for all } \psi^{+} \in V_{0}   .
\end{equation}
Define $\hat{Y}:=Y^{-} \oplus \mathbb{R}^{+} e^{+}$, where $\mathbb{R}^{+}:=[0, \infty)$.
\begin{Lem}\label{Lem3.2}
We assume that $(g_1)-(g_5)$ are satisfied. Then $\sup \Phi(\hat{Y})<\infty$, and there is a constant $R>0$ such that $\sup \Phi\left(\hat{Y} \backslash B_R\right) \leq 0$, where $ B_R = \{ \psi  \in  \hat{Y} : \|\psi\| \leq R\}$.
\end{Lem}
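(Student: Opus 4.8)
The plan is to exploit the convenient form \eqref{2.18} of $\Phi$ together with the fact that on $\hat Y$ the positive spectral part is only one--dimensional. First I would parametrize an arbitrary $\psi\in\hat Y$ as $\psi=se^{+}+\psi^{-}$ with $s\ge 0$ and $\psi^{-}\in Y^{-}$, so that $\psi^{+}=se^{+}$ and $\|\psi^{+}\|^{2}=s^{2}$. Since $Y^{+}$ and $Y^{-}$ are the positive and negative spectral subspaces of the self-adjoint operator $\mathcal D$, they are orthogonal also in $L^{2}$, hence $\|\psi\|_{2}^{2}=s^{2}\|e^{+}\|_{2}^{2}+\|\psi^{-}\|_{2}^{2}$. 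Substituting into \eqref{2.18} gives
$$\Phi(\psi)=\tfrac12 s^{2}\bigl(1-\omega\|e^{+}\|_{2}^{2}\bigr)-\tfrac12\|\psi^{-}\|^{2}-\tfrac{\omega}{2}\|\psi^{-}\|_{2}^{2}-\int_{\mathcal G}G(|\psi|)\,dx.$$
Using Lemma \ref{Lem2.3} (that is, $\|\psi^{-}\|_{2}^{2}\le \frac{1}{mc^{2}}\|\psi^{-}\|^{2}$) together with $\omega\in(-mc^{2},mc^{2})$, the two quadratic $\psi^{-}$ terms satisfy $-\tfrac12\|\psi^{-}\|^{2}-\tfrac{\omega}{2}\|\psi^{-}\|_{2}^{2}\le-\delta\|\psi^{-}\|^{2}$ for some $\delta>0$; the $s$-term is a fixed multiple of $s^{2}$, say $\le Cs^{2}$; and by $(g_4)$ one has $G\ge0$, so $-\int G\le0$.

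The goal is to prove that $\Phi(\psi)\to-\infty$ as $\|\psi\|\to\infty$ in $\hat Y$, which immediately yields both claims: one chooses $R$ so that $\Phi\le0$ outside $B_R$, and notes that $\Phi$ is bounded above on the ball $B_R$. I would argue by contradiction via a normalization. Suppose there is a sequence $\psi_k=s_ke^{+}+\psi_k^{-}\in\hat Y$ with $\|\psi_k\|\to\infty$ but $\Phi(\psi_k)\ge-M$. From the identity above and the bounds just mentioned, $\int_{\mathcal G}G(|\psi_k|)\,dx\le M+Cs_k^{2}\le M+C\|\psi_k\|^{2}$. Setting $w_k=\psi_k/\|\psi_k\|=t_ke^{+}+w_k^{-}$, so that $\|w_k\|=1$ and $t_k^{2}+\|w_k^{-}\|^{2}=1$, Lemma \ref{Lem2.4} combined with Lemma \ref{Lem2.3} then gives $\|w_k\|_{\theta}^{\theta}\le C\|\psi_k\|^{2-\theta}\to0$, precisely because $\theta>2$. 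By the splitting estimate \eqref{2.17}, $c_\theta\|w_k^{+}\|_{\theta}^{\theta}\le\|w_k\|_{\theta}^{\theta}\to0$; since $w_k^{+}=t_ke^{+}$ and $\|e^{+}\|_{\theta}>0$, this forces $t_k\to0$ and hence $\|w_k^{-}\|^{2}=1-t_k^{2}\to1$. Dividing the identity for $\Phi(\psi_k)$ by $\|\psi_k\|^{2}$ and discarding $-\int G\le0$ yields $\Phi(\psi_k)/\|\psi_k\|^{2}\le\tfrac12 t_k^{2}(1-\omega\|e^{+}\|_{2}^{2})-\delta\|w_k^{-}\|^{2}\to-\delta<0$, which contradicts $\Phi(\psi_k)/\|\psi_k\|^{2}\ge-M/\|\psi_k\|^{2}\to0$.

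I expect the main obstacle to be exactly the $L^{2}$-type contributions, namely the term $\tfrac{\omega}{2}\|\psi^{-}\|_{2}^{2}$ and the correction $C\|\psi\|_{2}^{2}$ hidden in Lemma \ref{Lem2.4}, which on a noncompact graph cannot be absorbed into $-\tfrac12\|\psi^{-}\|^{2}$ by a naive constant comparison when the mass $m$ is small. The normalization argument circumvents this difficulty: after dividing by $\|\psi_k\|^{2}$, the superquadraticity $\theta>2$ makes the $L^{\theta}$-mass of the normalized sequence vanish, which through \eqref{2.17} kills the one-dimensional $Y^{+}$ component and leaves the strictly negative $Y^{-}$ quadratic term dominant. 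The remaining routine points are the boundedness of $\Phi$ on $B_R$ and the fact that $\|e^{+}\|_{\theta}$ and $\|e^{+}\|_{2}$ are positive and finite, which hold since $e^{+}\in V_0$ is a fixed nonzero compactly supported spinor.
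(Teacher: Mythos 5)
Your proof is correct: the identity $\Phi(\psi)=\tfrac12 s^2(1-\omega\|e^+\|_2^2)-\tfrac12\|\psi^-\|^2-\tfrac{\omega}{2}\|\psi^-\|_2^2-\int_{\mathcal G}G(|\psi|)\,dx$ on $\hat Y$ is legitimate (the spectral subspaces are $L^2$-orthogonal), the absorption $-\tfrac12\|\psi^-\|^2-\tfrac{\omega}{2}\|\psi^-\|_2^2\le-\delta\|\psi^-\|^2$ follows from Lemma \ref{Lem2.3} and $|\omega|<mc^2$, and the chain $\int G(|\psi_k|)\le M+C\|\psi_k\|^2$ $\Rightarrow$ $\|w_k\|_\theta^\theta\to0$ (via Lemma \ref{Lem2.4}, Lemma \ref{Lem2.3} and $\theta>2$) $\Rightarrow$ $t_k\to0$ (via \eqref{2.17} and $\|e^+\|_\theta>0$) $\Rightarrow$ $\Phi(\psi_k)/\|\psi_k\|^2\le -\delta+o(1)$ delivers the contradiction with $\Phi(\psi_k)\ge-M$. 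The paper also normalizes and argues by contradiction, but the mechanism is genuinely different — in fact it runs in the opposite direction: it first shows that the normalized \emph{positive} component does not vanish (by comparing $\Phi(\varphi_j)$ with $\Phi(\psi_j)/\|\psi_j\|^2$ through the superquadratic monotonicity of $G$), and then exploits the compact core $\mathcal K$ together with the two-sided spectral bound \eqref{3.2} on $V_0$ to conclude $\|\varphi\|=0$, which needs a weak-limit extraction and the finiteness of $|\mathcal K|$. You instead prove that the one-dimensional positive component of the normalized sequence \emph{must} vanish, after which the strictly negative $Y^-$-quadratic term dominates. What your route buys is a fully quantitative argument that never leaves the spectral decomposition: no weak limits, no compact core, no appeal to the constants $\gamma_0,\bar\gamma$ of \eqref{3.2}; the only structural inputs are Lemma \ref{Lem2.3}, Lemma \ref{Lem2.4}, \eqref{2.17}, and the one-dimensionality of $\hat Y\cap Y^+$. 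The paper's route, by contrast, is the one that generalizes more directly to the larger sets $B_R$ of Lemma \ref{Lem3.3}, where the positive part is finite- but not one-dimensional and your step ``$\|w_k^+\|_\theta\to0$ forces $t_k\to0$'' would need the equivalence of norms on a finite-dimensional space rather than a single fixed spinor; as stated for $\hat Y$, however, your argument is complete.
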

\begin{proof}
It is clear that $\sup \Phi(\hat{Y})<\infty$. We show that $\Phi_n(\psi) \rightarrow -\infty$ as $\|\psi\| \rightarrow \infty, \psi \in \hat{Y}$. Assume that this is not true. Then there exist sequences $\left\{\psi_j\right\} \in \hat{Y}$ and $M>0$ such that $\Phi\left(\psi_j\right) \geq-M$ and $\left\|\psi_j\right\| \rightarrow \infty$ as $j \rightarrow \infty$. Setting $\varphi_j=\frac{\psi_j}{\left\|\psi_j\right\|}=\varphi_j^{-}+\varphi_j^{+}$, we have (along a subsequence) $\varphi_j^{-}\rightharpoonup \varphi^{-}$and $\varphi_j^{+} \rightarrow \varphi^{+}$. Then $\varphi^{+} \neq 0$ because otherwise, $\varphi_j^{+} \rightarrow 0$ . By \eqref{2.18} and \eqref{2.21}, we have
\begin{align*}
   \Phi(\varphi_j)
    =&\Phi\left( \frac{\psi_j}{\|\psi_j\|}\right)  =\frac{1}{2} \int_{\mathcal{G}}\left\langle\frac{\psi_j}{\|\psi_j\|},\mathcal{D}\frac{\psi_j}{\|\psi_j\|} \right\rangle d x
       -\frac{\omega}{2}\int_{\mathcal{G}} \frac{|\psi_j|^2}{\|\psi_j\|^2} d x 
       - \int_{\mathcal{G}} G\left(\frac{|\psi_j|}{\|\psi_j\|}\right )d x\\
 =& \frac{1}{\|\psi_j\|^2}  \left[  
  \frac{1}{2} \int_{\mathcal{G}} \langle  \psi_j,\mathcal{D}\psi_j \rangle dx
   -\frac{\omega}{2}\int_{\mathcal{G}} |\psi_j|^2 dx
   -\int_{\mathcal{G}}  G(|\psi_j| )dx
  \right]\\
 & +\frac{1}{\|\psi_j\|^2}\int_{\mathcal{G}}  G(|\psi_j| )dx
   -\int_{\mathcal{G}}  G\left(\frac{|\psi_j|}{\|\psi_j\|}\right)dx\\
    = &\frac{1}{\|\psi_j\|^2} \Phi(\psi_j)+\frac{1}{\|\psi_j\|^2}\int_{\mathcal{G}}  G(|\psi_j| )dx
   -\int_{\mathcal{G}}  G\left(\frac{|\psi_j|}{\|\psi_j\|}\right)dx  \\
    \geq &   \frac{1}{\|\psi_j\|^2} \Phi(\psi_j).
 \end{align*}
Then we have 
$$\frac{1}{2}\left\|\varphi_j^{+}\right\|^2-\frac{1}{2}\left\|\varphi_j^{-}\right\|^2 
 \geq\Phi_n(\varphi_j)
 \geq  \frac{1}{\|\psi_j\|^2} \Phi_n(\psi_j)\geq 0.$$
It implies that
$$
\limsup _{j \rightarrow \infty}\frac{1}{2}\left( \|\varphi_j^+\|^2 -\|\varphi_j^-\|^2 \right) \geq 0 ,
$$
that is
$$
\limsup _{j \rightarrow \infty}\left\|\varphi_j^{-}\right\|^2 \leq 0.
$$
Since  $ \|\varphi_j\|=\left\|   \frac{\psi_j}{\|\psi_j\|}\right\| =1$ , one has $ \|\varphi_j\|=\|\varphi_j^+ +\varphi_j^-\|\rightarrow \|\varphi_j^-\|\rightarrow 0$ .
This is a contradiction. We can choose a compact core $\mathcal{K}$ such that for every $x \in \mathcal{K} , \left|\psi_j(x)\right| \leq R$ where $R$ is a positive constant. By \eqref{3.2}, one has
\begin{align*}
  \frac{\Phi(\psi_j)}{\|\psi_j\|^2} 
  &  =\frac{1}{\|\psi_j\|^2}\left(\frac{1}{2}\|\psi_j^{+}\|^2-\frac{1}{2}\|\psi_j^{-}\|^2
  -\frac{\omega}{2} \int_{\mathcal{K}}|\psi_j|^2 d x
  - \int_{\mathcal{K}}G(|\psi_j| )  d x \right)\\
   &\leq \frac{1}{\|\psi_j\|^2}\left(\frac{1}{2}\|\psi_j^{+}\|^2-\frac{1}{2}\|\psi_j^{-}\|^2
      -\int_{\mathcal{K}}G(|\psi_j| )  d x  \right) \\
   & =\frac{1}{2} \left\|\frac{\psi_j^+}{\|\psi_j\|}\right\|^2 
        -\frac{1}{2} \left\|\frac{\psi_j^-}{\|\psi_j\|}\right\|^2 
        -\bar{\gamma} \int_{\mathcal{K}}|\varphi_j|^2 dx
        -\int_{\mathcal{K}}  \left (\frac{G(|\psi_j| )-\bar{\gamma}|\psi_j|^2 }{\|\psi_j\|^2} \right)dx  \\
   & \leq\frac{1}{2}(\|\varphi_j^+\|^2-\|\varphi_j^-\|^2 -2\bar{\gamma}\|\varphi_j\|_2^2)
      +\frac{1}{ \|\psi_j\|^2} \int_{\mathcal{K}} \left(C( |\psi_j|^2+|\psi_j|^p) - \bar{\gamma}|\psi_j|^2   \right)
   \\
   & \leq \frac{1}{2}(\|\varphi_j^+\|^2-\|\varphi_j^-\|^2 -2\|\varphi_j\|^2)
       + \frac{C|\mathcal{K}|}{ \|\psi_j\|^2} \\
   & =\frac{1}{2}(-\|\varphi_j^+\|^2+\|\varphi_j^-\|^2 )+ \frac{C|\mathcal{K}|}{ \|\psi_j\|^2} \\
   & =-\frac{1}{2} \|\varphi_j\|^2+ \frac{C|\mathcal{K}|}{ \|\psi_j\|^2}.
\end{align*}
Consequently,
$$
0 \leq-\frac{1}{2}\|\varphi\|^2+\liminf _{j \rightarrow \infty} \frac{C|\mathcal{K}|}{\left\|\psi_j\right\|^2}=-\frac{1}{2}\|\varphi\|^2
\Rightarrow \|\varphi\|^2 \leq 0.
$$
This is a contradiction. This completes the proof.
\end{proof}

\begin{Lem}\label{Lem3.3}
	For every $ N \in \mathbb{N}$ there exists $R=R(N,p)>0$ and an $N$-dimensional space $Z_N \subset Y^+$ 
	such that 
	\begin{equation}\label{3.3}
		\Phi(\psi)  \leq 0 , \quad   \forall \psi \in \partial B_R,
	\end{equation}
	where $ B_R = \{ \psi  \in  Y : \|\psi ^-\| \leq R \text { and } \psi ^+ \in Z_N , \|\psi ^+\| \leq R\}$.
\end{Lem}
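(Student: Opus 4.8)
The plan is to reduce the boundary estimate to the single claim that $\Phi(\psi)\to-\infty$ as $\|\psi\|\to\infty$ along the subspace $W:=Y^{-}\oplus Z_N$, for any fixed $N$-dimensional subspace $Z_N\subset Y^{+}$ (the choice of $Z_N$ is immaterial beyond its finite dimensionality, which makes the $Y$-, $L^2$- and $L^\theta$-norms mutually equivalent on it). This reduction is legitimate because every point of $\partial B_R$ satisfies $\max\{\|\psi^{+}\|,\|\psi^{-}\|\}=R$, hence $\|\psi\|^2=\|\psi^{+}\|^2+\|\psi^{-}\|^2\ge R^2$; so once the divergence is established, choosing $R$ large enough forces $\Phi\le 0$ on $\partial B_R$. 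I would argue by contradiction: if no such $R$ existed, there would be $\psi_j\in W$ with $\|\psi_j\|\to\infty$ and $\Phi(\psi_j)>0$. Writing $\varphi_j:=\psi_j/\|\psi_j\|=\varphi_j^{+}+\varphi_j^{-}$, the component $\varphi_j^{+}\in Z_N$ is bounded in a finite-dimensional space, so up to a subsequence $\varphi_j^{+}\to\varphi^{+}$ strongly (in every norm), while $\varphi_j^{-}\rightharpoonup\varphi^{-}$ in $Y^{-}$. Using \eqref{2.18}, $G\ge0$, and Lemma \ref{Lem2.3} in the form $\|\varphi_j\|_2^2\le\frac1m\|\varphi_j\|^2=\frac1m$, division by $\|\psi_j\|^2$ yields the two controlling inequalities
\[
\frac{1}{\|\psi_j\|^2}\int_{\mathcal{G}}G(|\psi_j|)\,dx\le \tfrac12\|\varphi_j^{+}\|^2-\tfrac12\|\varphi_j^{-}\|^2-\tfrac{\omega}{2}\|\varphi_j\|_2^2\le \tfrac12+\tfrac{|\omega|}{2m}.
\]

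The decisive dichotomy is whether $\varphi^{+}$ vanishes. If $\varphi^{+}\ne0$, I would exploit superquadraticity: by Lemma \ref{Lem2.4} (i.e. \eqref{2.21}) and the decomposition estimate \eqref{2.17} one has $\int_{\mathcal{G}}G(|\psi_j|)\ge C\|\psi_j\|_\theta^\theta-C\|\psi_j\|_2^2\ge Cc_\theta\|\psi_j^{+}\|_\theta^\theta-C\|\psi_j\|_2^2$, and since $\|\psi_j^{+}\|_\theta^\theta=\|\psi_j\|^\theta\,\|\varphi_j^{+}\|_\theta^\theta$ with $\|\varphi_j^{+}\|_\theta\to\|\varphi^{+}\|_\theta>0$ (via \eqref{2.9}), together with $\|\psi_j\|_2^2\le\frac1m\|\psi_j\|^2$, one gets
\[
\frac{1}{\|\psi_j\|^2}\int_{\mathcal{G}}G(|\psi_j|)\,dx\ge \tfrac{Cc_\theta}{2}\|\varphi^{+}\|_\theta^\theta\,\|\psi_j\|^{\theta-2}-\tfrac{C}{m}\longrightarrow+\infty,
\]
because $\theta>2$; this contradicts the upper bound $\tfrac12+\tfrac{|\omega|}{2m}$ above. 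Note this step only needs the strong convergence of the finite-dimensional part $\varphi_j^{+}$, so it sidesteps the absence of compactness on $Y^{-}$.

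If instead $\varphi^{+}=0$, then $\|\varphi_j^{+}\|\to0$ and, since $\|\varphi_j\|^2=\|\varphi_j^{+}\|^2+\|\varphi_j^{-}\|^2=1$, necessarily $\|\varphi_j^{-}\|^2\to1$. Dropping the nonnegative $G$-term in the middle inequality gives $\tfrac12\|\varphi_j^{-}\|^2+\tfrac{\omega}{2}\|\varphi_j\|_2^2\le \tfrac12\|\varphi_j^{+}\|^2\to0$, and bounding $\tfrac{\omega}{2}\|\varphi_j\|_2^2\ge-\tfrac{|\omega|}{2m}$ by Lemma \ref{Lem2.3} forces $\limsup_{j}\|\varphi_j^{-}\|^2\le\frac{|\omega|}{m}<1$, contradicting $\|\varphi_j^{-}\|^2\to1$. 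Here it is essential that $\omega\in(-m,m)$, i.e. $|\omega|<m$. The main obstacle is precisely this second case: the quadratic form is strongly indefinite and the frequency term $-\tfrac{\omega}{2}\|\cdot\|_2^2$ may be positive on the infinite-dimensional $Y^{-}$, so one cannot simply discard it; the resolution is to control it against the spectral gap via Lemma \ref{Lem2.3} together with $\eqref{3.2}$, which is exactly where the constraint $\omega\in(-m,m)$ is consumed. Having ruled out both cases, the contradiction is complete and the desired $R=R(N,p)$ exists.
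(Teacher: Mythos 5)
Your proof is correct, but it follows a genuinely different route from the paper's. The paper constructs $Z_N$ concretely as the span of positive projections $\eta_1^{+},\dots,\eta_N^{+}$ of upper-component test spinors from $V$, and then argues directly: it splits into the cases $\|\xi\|\le\|\varphi\|$ and $\|\xi\|\ge\|\varphi\|$, and in the second case bounds $\Phi(\psi)\le\tfrac12R^2-C\int_{\mathcal G}|\varphi+\xi|^\theta$, passes to the $L^2$-norm by H\"older, isolates the finite-dimensional part $\chi+\xi$ by $L^2$-orthogonality, and uses norm equivalence on that finite-dimensional space to obtain the explicit bound $\tfrac12R^2-CR^\theta$; it must then treat the degenerate case $\dim V^{+}<\infty$ separately via conjugation by $\sigma_2$. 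You instead run a normalized contradiction argument on $W=Y^{-}\oplus Z_N$ with a dichotomy on whether the (strongly convergent, finite-dimensional) part $\varphi^{+}$ vanishes, using Lemma \ref{Lem2.4} together with \eqref{2.17} in the non-vanishing case and the spectral gap of Lemma \ref{Lem2.3} with $|\omega|<m$ in the vanishing case. Your version works for an arbitrary $N$-dimensional $Z_N\subset Y^{+}$, so it dispenses entirely with the $V^{+}$/$\sigma_2$ case distinction, and it also avoids the paper's H\"older step $\int_{\mathcal G}|\varphi+\xi|^2\le C(\int_{\mathcal G}|\varphi+\xi|^\theta)^{2/\theta}$, which as written presupposes $|\mathcal G|<\infty$ and is delicate on a noncompact graph; the price is that your argument is purely qualitative (no explicit $R$), whereas the paper's yields the quantitative threshold $\tfrac12R^2-CR^\theta\le0$. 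Two cosmetic remarks: your contradiction hypothesis $\Phi(\psi_j)>0$ proves only that the superlevel set $\{\Phi>0\}\cap W$ is bounded rather than the announced divergence $\Phi\to-\infty$, but that weaker statement already suffices for the lemma (and taking $\Phi(\psi_j)\ge-M$ would give the stronger one at no cost); and your treatment of the sign of $\omega$ via $|\omega|<m$ is actually more careful than the paper's own Case $\|\xi\|\le\|\varphi\|$, which silently discards the term $-\tfrac{\omega}{2}\int_{\mathcal G}|\psi|^2\,dx$ and thus implicitly uses $\omega\ge0$.
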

\begin{proof}
	It follows from \eqref{2.19} that 
	\begin{equation*}
		\int_{\mathcal{G}}\langle \eta ,\mathcal{D} \eta \rangle dx= \|\eta^+\|^2 - \|\eta^-\|^2   
		=m\int_{\mathcal{G}} |\eta ^1|^2 dx.
	\end{equation*}
	If $ \eta ^1 \neq 0$, then $\|\eta^+\|^2\neq \|\eta^-\|^2$. Since $\eta^+ \perp\eta^- $, if $\|\eta^+\|^2= \|\eta^-\|^2 $, we have $\eta^+ =\eta^- =0$, thus $\eta^+ \neq 0$.
	\par 
	Assume first that $dim V^+ = \infty $, where $V^+ = V \cap Y^+$. For every fixed $ N \in  \mathbb{N}$, choose $N$ linearly independent spinors $ \eta_1^+,..., \eta_N^+  \in V^{+}$ and set $Z_N:=\operatorname{span}\left\{\eta_1^{+}, \ldots\right.$, $\left.\eta_N^{+}\right\}$. 
	As a consequence, if $ \psi \in \partial B_R$, then $ \psi = \varphi + \xi$ with $ \varphi \in Y^-$ 
	and $ \xi \in  Z_N \subset Y^+$, so that
	\begin{align*}
		\Phi(\psi)
		&= \Phi(\varphi+\xi )\\
		& =\frac{1}{2} \left (\|\xi\|^2 - \|\varphi\|^2   \right)
		- \frac{\omega}{2} \int_{\mathcal{G}} | \varphi +\xi|^2 dx 
		-\int_{\mathcal{G}} G(|\varphi +\xi|) dx \\
		& \leq \frac{1}{2} \left (\|\xi\|^2 - \|\varphi\|^2   \right)
		-\int_{\mathcal{G}} G(|\varphi +\xi|) dx.
	\end{align*}
	If $ \|\xi\| \leq \|\varphi\| $, then
	\begin{equation*}
		\Phi(\psi) \leq -\int_{\mathcal{G}} G(|\varphi +\xi|) dx \leq 0.
	\end{equation*}
	If $ \|\xi\| \geq\|\varphi\|$, recall that $ \psi \in \partial B_R$ and \eqref{2.20}, we have $\|\xi\|=R$ , thus
	\begin{equation}\label{3.4}
		\begin{aligned}
			\Phi(\psi)
			\leq & \frac{1}{2} R^2 - \int_{\mathcal{G}} G(|\varphi +\xi|) dx
			\leq  \frac{1}{2} R^2 -C \int_{\mathcal{G}}|\varphi +\xi|^{\theta} dx.
		\end{aligned}
	\end{equation}
	From the H\"older's  inequality,
	\begin{equation*}
		\int_{\mathcal{G}}|\varphi+\xi|^2 \leq C_1^{\frac{\theta-2}{\theta}}\left(\int_{\mathcal{G}}|\varphi+\xi|^\theta d x\right)^{\frac{2}{\theta}},
	\end{equation*}
	then
	\begin{equation*}
		C_1^{\frac{2-\theta}{2}}  \left(\int_{\mathcal{G}}|\varphi+\xi|^2\right)^{\frac{\theta}{2}} 
		\leq \int_{\mathcal{G}}|\varphi+\xi|^\theta d x,
	\end{equation*}
	Combining with \eqref{3.4} ,
	\begin{equation}\label{3.5}
		\Phi(\psi)  \leq \frac{1}{2} R^2 - C C_1^{\frac{2-\theta}{2}}\left ( \int_{\mathcal{G}}  |\varphi +\xi|^2 dx \right)^\frac{\theta}{2}.
	\end{equation}
	\par 
	By definition, $ \xi= \sum_{j=1}^{N} \lambda_j \eta_j^+$, for some $\lambda_j  \in  \mathbb{C}$. 
	On the other hand, denoting by $\eta_j^-$ the spinors such that $\eta_j^+ +\eta_j^- =:\eta_j \in V$,
	Since $ \varphi\in Y^-$ , there results that $ \varphi = \varphi^{\bot} + \chi$ with 
	$ \chi := \sum_{j=1}^{N} \lambda_j \eta_j^-$ and $ \varphi^{\bot}$ the orthogonal complement of $\chi$ in $ Y^-$.
	Therefore ,as $\varphi^{\bot}$ is  orthogonal  to $ \chi$ and $ \xi$ in $ L^2(\mathcal{G} , \mathbb{C}^2)$
	\begin{equation}\label{3.6}
		\begin{aligned}
			\int_{\mathcal{G}}  |\varphi+\xi|^2 dx
			= & \int_{\mathcal{G}}  |\varphi^{\bot} + \chi + \xi|^2 dx \\
			= & \int_{\mathcal{G}}  |\varphi^{\bot}|^2 dx  
			+2 \langle \varphi^{\bot},\xi \rangle +2 \langle \varphi^{\bot},\chi \rangle
			+ \int_{\mathcal{G}}  |\chi + \xi|^2 dx\\
			= & \int_{\mathcal{G}}  |\varphi^{\bot}|^2 dx + \int_{\mathcal{G}}  |\chi + \xi|^2 dx.
		\end{aligned}
	\end{equation}
	Plugging into \eqref{3.5},
	\begin{align*}
		\Phi(\psi)
		\leq & \frac{1}{2} R^2 - C C_1^{\frac{2-\theta}{2}}\left ( \int_{\mathcal{G}}  |\varphi +\xi|^2 dx \right)^\frac{\theta}{2} \\
		= &  \frac{1}{2} R^2 - C C_1^{\frac{2-\theta}{2}}\left (\int_{ \mathcal{G}}  |\varphi^\bot|^2 dx +\int_{\mathcal{G}}  |\chi+\xi|^2 dx \right)^\frac{\theta}{2}\\
		\leq & \frac{1}{2} R^2 - C C_1^{\frac{2-\theta}{2}}\left (\int_{\mathcal{G}}  |\chi+\xi|^2 dx \right)^\frac{\theta}{2}.
	\end{align*}
	Since $\chi$ and $\xi$ are orthogonal by construction and $ \chi +\xi$ belongs to a finite dimensional space (so that its $L^2$-norm is equivalent to the Y-norm), there exists $C>0$ such that 
	\begin{align*}
		\Phi(\psi)
		\leq & \frac{1}{2} R^2 -C  \left(\int_{\mathcal{G}}  |\chi +\xi|^2 dx \right) ^\frac{\theta }{2}\\
		=&  \frac{1}{2} R^2 -C (\|\chi\|^2 +\|\xi\|^2 )^\frac{\theta }{2} \\
		\leq & \frac{1}{2} R^2 -C\|\xi\|^\theta\\
		=&  \frac{1}{2} R^2 -C R^\theta ,
	\end{align*}
	for $R$ large, the claim is proved.
	\par 
	Finally, consider the case $dim V^+ < \infty$. As $dim V = \infty$, we have $dim V^- = \infty$.
	On the other hand, there holds $ \sigma_2 V^- \subset Y^+$ and $ \sigma_2 V^+ \subset Y^- $, where
	\begin{equation*}
		\sigma_2=\left(\begin{array}{ll}
			0 & -i \\
			i & 0
		\end{array}\right),
	\end{equation*}
	as this matrix anticommutes with the Dirac operator. Therefore (also recalling that $ \sigma_2$  in unitary), if we defines $ \tilde{V} =  \sigma_2  V$ , which consists of spinors of the form 
	$$
	\eta=\binom{0}{\eta^2}\quad  \text {with }\quad \eta^2 \in C_0^\infty(\mathcal{G} ,\mathbb{C}^2).
	$$
	Choosing $\eta=\binom{\eta^1}{0} \in V $, then $\eta^+=\binom{(\eta^1)^+}{0} \in V^+ ,\eta^-=\binom{(\eta^1)^-}{0} \in V^-$, there results that
	\begin{equation}\label{3.7}
		\sigma_2  V^- =\binom{0}{i(\eta^1)^-} \in Y^+ ,\sigma_2  V^+ =\binom{0}{i(\eta^1)^+} \in Y^- .
	\end{equation} 
	Since $ \tilde{V} =  \sigma_2 V \ni \left(\begin{array}{ll}
		0 & -i \\
		i & 0
	\end{array}\right) 
	\binom{\eta^1}{0}
	$ 
	and $ \tilde{V} \ni \binom{0}{\eta^2}$, thus
	\begin{equation*}
		\binom{0}{i \eta^1} =\binom{0}{\eta^2},
	\end{equation*}
	that is 
	\begin{equation*}
		i (\eta^1)^-=(\eta^2 )^+ , i (\eta^1)^+=(\eta^2 )^-.
	\end{equation*}
	Combining with \eqref{3.7},we get 
	\begin{equation*}
		\sigma_2 V^- =\binom{0}{(\eta^2)^+}=\tilde{V}^+ ,  
		\sigma_2 V^+ =\binom{0}{(\eta^2)^-}=\tilde{V}^- .
	\end{equation*}
	Therefore, $ \tilde{V}^+=\sigma_2 V^- $. Since $ dim V=\infty$, $ dim (\tilde{V}^+) =\infty$, the one (arguing as before) can prove again \eqref{3.3}.
\end{proof}
\par 
It follows from Lemma \ref{Lem3.1}-\ref{Lem3.3} and Theorem 5.1 in \cite{MR2232435} that $\Phi$ possesses a $(C)_c$-sequence  ${\psi_j}$ satisfying 
\begin{equation*}
\Phi\left({\psi_j}\right) \rightarrow c \geqslant \kappa \quad\text{and}\quad\left(1+\left\|{\psi_j}\right\|\right) \Phi^{\prime}\left({\psi_j}\right) \rightarrow 0.
\end{equation*}
\begin{Lem}\label{Lem3.4}
Any $( C )_c$ sequence is bounded in $Y$.
\end{Lem}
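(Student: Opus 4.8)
The plan is to argue by contradiction in the strongly–indefinite framework: assume $\|\psi_j\|\to\infty$ and derive a contradiction after rescaling. The backbone is the energy identity coming from the two defining properties of a $(C)_c$-sequence. Since $(1+\|\psi_j\|)\Phi'(\psi_j)\to0$ forces $\langle\Phi'(\psi_j),\psi_j\rangle\to0$, a direct computation from \eqref{2.18} (the quadratic and the $\omega$-terms cancel) gives
\[
\int_{\mathcal{G}}\hat{G}(\psi_j)\,dx=\Phi(\psi_j)-\tfrac12\langle\Phi'(\psi_j),\psi_j\rangle=c+o(1),
\]
so $\int_{\mathcal{G}}\hat{G}(\psi_j)$ stays bounded. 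By $(g_4)$ one has $\hat{G}(\psi)\ge(\tfrac12-\tfrac1\theta)\,g(|\psi|)|\psi|^2\ge0$, whence $\int_{\mathcal{G}}g(|\psi_j|)|\psi_j|^2\le C$; combined with the lower bound $G(s)\ge C_1 s^\theta$ of Lemma \ref{Lem2.4} this yields the superquadratic integrability $\int_{\{|\psi_j|\ge1\}}|\psi_j|^\theta\le C$, uniformly in $j$.

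Next I would normalize, setting $\varphi_j:=\psi_j/\|\psi_j\|$ so that $\|\varphi_j\|=1$, and first show $\varphi_j\to0$ in $L^q(\mathcal{G},\mathbb{C}^2)$ for every $q\in(2,\infty)$. Splitting $\mathcal{G}$ at amplitude one: on $\{|\psi_j|<1\}$ one has $|\varphi_j|^{q-2}\le\|\psi_j\|^{-(q-2)}$, so $\int_{\{|\psi_j|<1\}}|\varphi_j|^q\le\|\psi_j\|^{-(q-2)}\|\varphi_j\|_2^2\to0$, using Lemma \ref{Lem2.3} to keep $\|\varphi_j\|_2$ bounded; on $\{|\psi_j|\ge1\}$ the bound $\int_{\{|\psi_j|\ge1\}}|\psi_j|^\theta\le C$ gives $\int_{\{|\psi_j|\ge1\}}|\varphi_j|^q\le\|\psi_j\|^{-q}C\to0$ for $q\le\theta$, and the range $q>\theta$ follows by interpolating this decay against the uniform bound $\|\varphi_j\|_Q\le C$ from \eqref{2.9}. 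Then I would test the Cerami condition against $\psi_j^{+}$ and $\psi_j^{-}$; dividing by $\|\psi_j\|^2$ and using the $L^2$-orthogonality of $\psi_j^{\pm}$ gives
\[
\|\varphi_j^{+}\|^2=\omega\|\varphi_j^{+}\|_2^2+\mathrm{Re}\!\int_{\mathcal{G}}g(|\psi_j|)\langle\varphi_j,\varphi_j^{+}\rangle\,dx+o(1),
\]
together with the analogous identity for $\varphi_j^{-}$ carrying opposite signs. Because $|\omega|<m$, Lemma \ref{Lem2.3} makes the quadratic terms strictly subordinate, so it suffices to prove the nonlinear integrals $\int_{\mathcal{G}}g(|\psi_j|)\langle\varphi_j,\varphi_j^{\pm}\rangle\,dx\to0$; this forces $\|\varphi_j^{\pm}\|\to0$, hence $\|\varphi_j\|\to0$, contradicting $\|\varphi_j\|=1$.

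To estimate the nonlinear integral I would split $\mathcal{G}$ into three amplitude regions. On $\{|\psi_j|<\delta\}$, $(g_2)$ makes $g\le\varepsilon$, so the contribution is $\le\varepsilon\|\varphi_j\|_2\|\varphi_j^{\pm}\|_2\le C\varepsilon$. On the intermediate band $\{\delta\le|\psi_j|\le R\}$ the strict positivity and continuity of $\hat{G}$ in $(g_5)$ bound its measure by $\int_{\mathcal{G}}\hat{G}(\psi_j)/\min_{s\in[\delta,R]}\hat{G}(s)\le C$, so Hölder's inequality together with the $L^q$-decay of $\varphi_j$ (and of $\varphi_j^{\pm}$, by boundedness of the projections on $L^q$, cf. \eqref{2.17}) sends this contribution to $0$ for each fixed $\delta,R$. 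On $\{|\psi_j|>R\}$ one writes $g(|\psi_j|)|\varphi_j|=\|\psi_j\|^{-1}g(|\psi_j|)|\psi_j|$ and controls $g(|\psi_j|)|\psi_j|$ through $(g_3)$ and the $L^\theta$-integrability above, the prefactor $\|\psi_j\|^{-1}$ absorbing the residual growth.

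The boundedness of $\int_{\mathcal{G}}\hat{G}(\psi_j)$ and the $L^q$-vanishing are routine; the main obstacle is the control of the nonlinear integral on the \emph{noncompact} graph, where mass can escape to infinity along the half-lines. The most delicate point is the large-amplitude region: the crude growth bound $(g_3)$ only gives $g(s)\lesssim s^{p-2}$, whereas the integrability harvested from $(g_4)$ controls $|\psi_j|$ merely in $L^\theta$, so the estimate closes directly when the exponents are compatible (for instance $\theta>p-1$) and otherwise one must spend the $L^q$-decay of $\varphi_j^{\pm}$ to absorb the surplus power. It is exactly this interplay between the superquadraticity exponent $\theta$ and the growth exponent $p$, together with the noncompactness of $\mathcal{G}$, that has to be handled with care; and the intermediate band, which is invisible to the $L^q$-rescaling alone, is precisely what the strict positivity of $\hat{G}$ in $(g_5)$ is designed to tame.
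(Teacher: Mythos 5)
Your overall strategy is the same as the paper's: argue by contradiction, extract the uniform bound on $\int_{\mathcal{G}}\hat G(\psi_j)$ from the Cerami condition, rescale to $\varphi_j=\psi_j/\|\psi_j\|$, show that $\varphi_j$ vanishes in $L^q$, $q>2$, on the large-amplitude sets (you obtain this from $(g_4)$ and the resulting $L^\theta$-bound on $\{|\psi_j|\ge1\}$, while the paper uses $(g_5)$ to get $\int_{\{|\psi_j|\ge\rho\}}|\varphi_j|^\xi\to0$ and then interpolates against \eqref{2.9}; the two routes are interchangeable), and finally test $d\Phi(\psi_j)$ against the spectral components (you use $\psi_j^{\pm}$ separately, the paper uses $\psi_j^{+}-\psi_j^{-}$, which is only a cosmetic difference). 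Your small-amplitude estimate via $(g_2)$ and your intermediate-band estimate --- bounding the measure of $\{\delta\le|\psi_j|\le R\}$ by $\int_{\mathcal{G}}\hat G(\psi_j)/\min_{[\delta,R]}\hat G$ and then applying H\"older with the $L^q$-vanishing --- are correct, and the latter is in fact a more careful version of what the paper does implicitly.

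The genuine gap is exactly where you flag it: the region $\{|\psi_j|>R\}$. There $(g_3)$ only gives $g(|\psi_j|)\lesssim|\psi_j|^{p-2}$, and the integrability you have harvested ($\int_{\{|\psi_j|\ge1\}}|\psi_j|^\theta\le C$ together with $\|\varphi_j\|_{L^q}\lesssim\|\psi_j\|^{-(q-2)/q}$) closes the estimate only under a compatibility condition such as $\theta>p-1$, or $p<3$ if one instead spends the decay of $\varphi_j^{\pm}$; neither is among the hypotheses, since Theorem \ref{Thm1.1} allows any $p\in(2,\infty)$ and $(g_4)$ only requires $\theta>2$. A conditional ``it works when the exponents are compatible'' does not prove the lemma as stated. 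For what it is worth, the paper's own proof closes this step by invoking the bound $g(|\psi_j|)\psi_j\le C_1|\psi_j|$ ``for all $|\psi_j|$'', i.e.\ by treating $g$ as globally bounded on the large-amplitude set, which is likewise not a consequence of $(g_1)$--$(g_5)$. So you have correctly located the delicate point, but your argument does not resolve it without an additional relation between $\theta$ and $p$ (or a boundedness assumption on $g$), and this is the step that must be completed for the proof to stand.
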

\begin{proof}
Let $ {\psi_j} \subset Y$ be a $ ( C )_c $-sequence. For every spinor $\psi \in \hat{Y}:=Y^{-} \oplus \mathbb{R}^{+} e^{+}$, one has $\psi=\varphi^{\perp}+\lambda e$, where $\lambda \in \mathbb{C}$ and $\varphi^{\perp} \in Y^{-}$ is orthogonal to $\lambda e$.  
Hence, by \eqref{2.7}, \eqref{3.1} and Lemma \ref{Lem2.4},
\begin{align*}
  {\Phi}(\psi) 
  =  &\frac{1}{2} \int_{\mathcal{G}} \left\langle \varphi^\bot,(\mathcal{D}-\omega)\varphi^\bot  \right\rangle dx
       + \frac{1}{2} \int_{\mathcal{G}} \left\langle \lambda e,(\mathcal{D}-\omega)\lambda e  \right\rangle dx
        - \int_{\mathcal{G}}G(|\psi|) d x \\
 \leq& \frac{1}{2} \int_{\mathcal{G}} \left\langle \lambda e,(\mathcal{D}-\omega)\lambda e  \right\rangle dx
        - \int_{\mathcal{G}}\left[ C|\psi|^\theta-C|\psi|^2    \right] dx  \\
  \leq & \frac{1}{2} \int_{\mathcal{G}} \left\langle \lambda e,(\mathcal{D}-\omega)\lambda e  \right\rangle dx 
         - C\int_{\mathcal{G}} |\lambda e|^\theta dx
         +C\int_{\mathcal{G}}|\lambda e|^2      dx\\
  = & \frac{m c^2-\omega}{2} \int_{\mathcal{G}}|\lambda e|^2 d x
        - C\int_{\mathcal{G}} |\lambda e|^\theta dx
         +C\int_{\mathcal{G}}|\lambda e|^2      dx \\
  \leq & \frac{C_1|\lambda|^2}{2} \int_{\mathcal{G}}|e|^2 d x
   -|\lambda|^\theta C \int_{\mathcal{G}}|e|^\theta d x+|\lambda|^2 C \int_{\mathcal{G}}|e|^2 d x  \\
  \leq&|\lambda|^2\left(C_2-C_3|\lambda|^{\theta-2}\right) \\
  \leq& C_4.
\end{align*}
Thus, we have
 \begin{equation}\label{3.8}
  C \geq \max _{\hat{Y}} {\Phi} \geq \kappa
={\Phi}\left(\psi_j\right)={\Phi}\left(\psi_j\right)-\frac{1}{2}\left\langle{d\Phi}\left(\psi_j\right), \psi_j\right\rangle .
\end{equation}
It follows from $ \Phi\left(\psi_j\right)$ and for $j$ large
\begin{equation}\label{3.9}
\begin{aligned}
 C
   \geq  {\Phi}\left(\psi_j\right)-\left\langle{d\Phi}\left(\psi_j\right), \psi_j\right\rangle
  =   \frac{1}{2} \int_{\mathcal{G}} g(|\psi_j|)|\psi_j|^2 dx -\int_{\mathcal{G}} G(|\psi_j|)  dx       
  =    \int_{\mathcal{G}} \hat{G}(|\psi_j|) ,
\end{aligned}
\end{equation}
where $ \hat{G}(|\psi_j|)  =\frac{1}{2} g(|\psi_j|)|\psi_j|^2 - G(|\psi_j|).$
\par 
Assume by contradiction that $ \|\psi_j \| \rightarrow \infty $.
By \eqref{2.9} and $ \varphi_j = \frac{\psi_j}{\|\psi_j \|}$, we have
\begin{align*}
  |\varphi_j| _s = & \frac{1}{\|\psi_j \|}   |\psi_j |_s
  \leq   \frac{1}{\|\psi_j \|}  C \|\psi_j \|
  =  C
\end{align*}
for all $ s \in (2,\infty)$.
\par
It follows from $(g_5)$ that, for any $\rho > 0$ there exists $a_\rho > 0$ with
\begin{equation}\label{3.10}
 \hat{G}(|\psi_j|)  \geq  a_\rho |\psi_j| ^\xi \text { for all } |\psi_j| \geq \rho .
\end{equation}
Now \eqref{3.9} and \eqref{3.10} imply for $I_j (\rho) := \{ x \in \mathcal{G} :|\psi_j| \geq \rho \}$ 
and $I_j^c (\rho) = \mathcal{G}  \backslash I_j (\rho) $ :
\begin{align*}
  \int_{I_j (\rho)}|\varphi_j| ^\xi 
  = \int_{I_j (\rho)} \frac{1}{\|\psi_j\|^\xi} |\psi_j|^\xi 
  \leq  \frac{1}{a_\rho\|\psi_j\|^\xi}  \int_{I_j (\rho)} \hat{G}(|\psi_j|)  
 \leq  \frac{C}{a_\rho\|\psi_j\|^\xi} \rightarrow 0 .
\end{align*}
 For any $s\in (2, \infty)$, we choose $s<\bar{s}<\infty$. Using H\"older's  inequality we get
 \begin{equation}\label{3.11}
\int_{I_j (\rho)} |\varphi_j|^s \leq
 \left ( \int_{I_j (\rho)} |\varphi_j|^\xi       \right)^{\frac{\bar{s}-s}{\bar{s}-\xi}}
 \left ( \int_{I_j (\rho)} |\varphi_j|^{\bar{s}}      \right)^{\frac{s-\xi}{\bar{s}-\xi}}  
  \rightarrow 0 .
\end{equation}
On the other hand, for any $ \varepsilon > 0 $, there exists $\rho_\varepsilon > 0$ such that
\begin{equation*}
  \left|g(|\psi|)\right|\psi \leq C ( |\psi|+|\psi|^{p-1})\leq \varepsilon |\psi|  
  \text { for all } |\psi|   \leq \rho_\varepsilon.
\end{equation*}
It follows from 
\begin{align*}
  \Phi^\prime(\psi_j)  (\psi_j^+  - \psi_j ^-)
  =& \int_{\mathcal{G}} \left\langle \psi_j ,\mathcal{D}  (\psi_j^+  - \psi_j ^-)      \right\rangle
      -  \int_{\mathcal{G}} \omega \psi_j (\psi_j^+  - \psi_j ^-)
      -   \int_{\mathcal{G}} g(| \psi_j|) \psi_j(\psi_j^+  - \psi_j ^-)\\
  = & \int_{\mathcal{G}} \left\langle \psi_j^+ ,\mathcal{D}  \psi_j^+      \right\rangle
    - \int_{\mathcal{G}} \left\langle \psi_j^- ,\mathcal{D} \psi_j^-      \right\rangle
    -   \int_{\mathcal{G}} \omega \psi_j (\psi_j^+  - \psi_j ^-)
    -\int_{\mathcal{G}} g(| \psi_j|) \psi_j(\psi_j^+  - \psi_j ^-)\\
  \leq& \|\psi_j\| ^2 -   \int_{\mathcal{G}} \omega \psi_j (\psi_j^+  - \psi_j ^-)
        -\int_{\mathcal{G}} g(| \psi_j|) \psi_j(\psi_j^+  - \psi_j ^-) ,
\end{align*}
\eqref{3.11} and $ g(| \psi_j|) \psi_j \leq C_1 | \psi_j|$ for all $| \psi_j|$ that 
\begin{align*}
  1 \leq & o(1) + \int_{I_j^c (\rho_\varepsilon)}\frac{|g(| \psi_j|) \psi_j|}{|\psi_j|} |\varphi_j|\cdot |\varphi_j^+ -\varphi_j^- |
     +\int_{I_j (\rho_\varepsilon)}\frac{|g(| \psi_j|) \psi_j|}{|\psi_j|} |\varphi_j|\cdot |\varphi_j^+ -\varphi_j^- |\\
  \leq &  o(1) +\varepsilon |\varphi_j|_2^2+C_1\int_{I_j (\rho_\varepsilon)}|\varphi_j|^2 \\
  \leq & o(1) +\varepsilon C\\
  \rightarrow & 0
\end{align*}
as $j \rightarrow \infty ,$ which is impossible.
\end{proof}
\\
\textbf{Proof of Theorem \ref{Thm1.1}:}
It follows from Lemma \ref{Lem3.1}-\ref{Lem3.3} and Theorem 5.1 in \cite{MR2232435} that $\Phi$ possesses a $(C)_c$-sequence  $\{\psi_j\}$. By Lemma \ref{Lem3.4}, $\{\psi_j\}$ is bounded in $Y$ and hence $\Phi^{\prime}\left({\psi_j}\right) \rightarrow 0.$
Now by the concentration compactness principle and the invariance of $\Phi$ with respect to the $\mathbb{Z}$-action (see \cite{Yang-Zhu}), a standard argument shows that there is a critical point $\psi\neq0$ with $\Phi(\psi)=c$ and $\Phi'(\psi)=0$.

\section{Nonrelativistic limit of solutions for NLDE}
In this section we prove Theorem \ref{Thm1.2}, namely, that there exists a wide class of (pairs of) sequences $\{c_n\},\{\omega_n\}$ for which the nonrelativistic limit holds. More precisely, we show that with such a choice of parameters the bound-state solution of the NLDE converge, as $c_n \rightarrow+\infty$, to the bound-state solution of the NLSE.
\par 
Preliminarily, note that, since here the role of the (sequence of the) speed of light is central, we cannot set any more $c=1$. As a consequence, all the previous results have to be meant with $m$ replaced by $m c_n^2$ (and $\omega$ replaced by $\omega_n$ ). In addition, we denote by $\mathcal{D}_n$ the Dirac operator with $c=c_n$ and with $ \Phi_n$ the action functional with $\mathcal{D}=\mathcal{D}_n$ and $\omega=\omega_n$. There are clearly many other quantities which actually depend on the index $n$ (such as, for instance, the form domain $Y, Z_N, \ldots$ ), but since such a dependence is not crucial we omit it for the sake of simplicity. In addition, in the following, we will always make the assumptions \eqref{1.4}-\eqref{1.6} on the parameters $\{c_n\},\{\omega_n\}$. 
Now, from Theorem \ref{Thm1.1}, there exist at least one bound-state solution of frequency $\omega_n$ of the NLDE at speed of light $c_n$. Hence, we denote throughout by $\left(\psi_n\right)$ a sequence of bound-state solution corresponding to those values of parameters.

\subsection{Uniform boundedness of the bound-state solution for NLDE}
Firstly, we prove that the sequence $\left\{\psi_n\right\}$ defined above is bounded in $L^q\left(\mathcal{G}, \mathbb{C}^2\right)$ uniformly with respect to $n$, $\forall q \in [2,\infty)$.

\begin{Lem}\label{Lem4.1}
Under the assumptions $(g_1)-(g_5)$, the sequence $\left\{\psi_n\right\}$ is bounded in $L^q\left(\mathcal{G}, \mathbb{C}^2\right)$ uniformly with respect to $n$, $\forall q \in  [2,\infty)$.
\end{Lem}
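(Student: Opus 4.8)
The plan is to control $\psi_n$ through the variational level together with the algebraic structure of its two components, and then to upgrade the resulting low-order integrability to every $L^q$ by Gagliardo--Nirenberg. First I would show that the critical level is uniformly bounded. Since the linking geometry is built from the \emph{fixed} element $e^{+}\in V_0\subset Y^{+}$ and the computation in Lemma~\ref{Lem3.4} gives $\Phi_n(\psi)\le|\lambda|^2(C_2-C_3|\lambda|^{\theta-2})\le C_4$ on $\hat Y$ with $C_4$ independent of $n$ (here \eqref{3.1} guarantees $mc_n^2-\omega_n\le C_2$ uniformly), the linking value satisfies $c_n=\Phi_n(\psi_n)\le\sup\Phi_n(\hat Y)\le C_4$. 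Because $\psi_n$ is a critical point, $\Phi_n'(\psi_n)=0$, so exactly as in \eqref{3.9} one has $\int_{\mathcal G}\hat G(|\psi_n|)=\Phi_n(\psi_n)-\tfrac12\langle d\Phi_n(\psi_n),\psi_n\rangle=c_n\le C_4$. Using $(g_4)$ in the forms $\hat G\ge(\tfrac\theta2-1)G\ge0$ and $\int_{\mathcal G}g(|\psi_n|)|\psi_n|^2=2\int_{\mathcal G}\hat G+2\int_{\mathcal G}G$, I deduce that both $\int_{\mathcal G}G(|\psi_n|)$ and $\int_{\mathcal G}g(|\psi_n|)|\psi_n|^2$ are bounded uniformly in $n$.

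Next comes the $L^2$ bound, which is the genuine starting point. Writing the equation componentwise for $\psi_n=(u_n,v_n)^T$, testing the first line against $\bar u_n$ and the second against $\bar v_n$, integrating by parts on each edge (the boundary terms cancelling by the Kirchhoff conditions \eqref{2.5}--\eqref{2.6}) and subtracting the real parts, the mixed derivative terms cancel and I am left with
\[
(mc_n^2-\omega_n)\|u_n\|_2^2+(mc_n^2+\omega_n)\|v_n\|_2^2=\int_{\mathcal G}g(|\psi_n|)\bigl(|u_n|^2-|v_n|^2\bigr).
\]
Since $mc_n^2-\omega_n\ge C_1>0$ by \eqref{3.1} and the right-hand side is at most $\int_{\mathcal G}g(|\psi_n|)|\psi_n|^2\le C$ from the previous step, this forces $\|u_n\|_2\le C$ and $(mc_n^2+\omega_n)\|v_n\|_2^2\le C$; in particular $\|v_n\|_2\to0$ and $\|\psi_n\|_2\le C$ uniformly. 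Feeding this into Lemma~\ref{Lem2.4}, namely $\int_{\mathcal G}|\psi_n|^{\theta}\le C\bigl(\int_{\mathcal G}\hat G+\|\psi_n\|_2^2\bigr)\le C$, yields a uniform $L^{\theta}$ bound, and interpolation gives uniform control in $L^q$ for every $q\in[2,\theta]$.

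For exponents $q>\theta$ I would pass through the derivatives. The second component equation $-ic_n u_n'=(mc_n^2+\omega_n+g(|\psi_n|))v_n$, together with $(mc_n^2+\omega_n)\|v_n\|_2^2\le C$ and $\sqrt{mc_n^2+\omega_n}/c_n\to\sqrt{2m}$ (a consequence of \eqref{1.4}--\eqref{1.6}), controls $\|u_n'\|_2$ once the term $c_n^{-1}\|g(|\psi_n|)v_n\|_2$ is absorbed via $(g_3)$ and the integrability already at hand; the first component equation controls $\|v_n'\|_2$ analogously. With $\|\psi_n\|_2$ and $\|\psi_n'\|_2$ in hand, Lemma~\ref{Lem2.5} delivers $\|\psi_n\|_q\le C_q\|\psi_n\|_2^{1/2+1/q}\|\psi_n'\|_2^{1/2-1/q}\le C_q$ for all $q\in[2,\infty)$.

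The main obstacle is precisely this final absorption. By $(g_3)$ the nonlinear contribution to the derivative estimate behaves like $c_n^{-1}\bigl\||\psi_n|^{p-2}v_n\bigr\|_2$, whose square is dominated by $\|\psi_n\|_{2p-2}^{2p-2}$; applying Gagliardo--Nirenberg to this norm produces a factor $\|\psi_n'\|_2^{p-2}$, so closing the estimate requires a self-improving inequality of the type $\|\psi_n'\|_2\le C+C c_n^{-1}\|\psi_n'\|_2^{(p-2)/2}$. The subcritical restriction $2<p<6$ is what keeps the relevant interpolation exponent below threshold and, combined with the smallness $c_n^{-1}\to0$, lets one absorb the nonlinear term and run a short bootstrap that feeds the $L^q$ bounds already obtained on $[2,\theta]$ back into the derivative estimate until the full range $q\in[2,\infty)$ is reached.
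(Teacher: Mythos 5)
Your route is genuinely different from the paper's, and roughly its first two thirds are correct and in some respects more transparent. The paper does not pass through the components or the derivatives here at all: it bounds the graph norm $\|\psi_n\|=\|\sqrt{|\mathcal{D}_n|}\,\psi_n\|_{L^2}$ directly by pairing $d\Phi_n(\psi_n)$ with $\psi_n^{+}-\psi_n^{-}$, splitting $\int_{\mathcal G}g(|\psi_n|)\psi_n(\psi_n^{+}-\psi_n^{-})$ over $\{g>\varepsilon\}$ and $\{g\le\varepsilon\}$, controlling the first piece by $\bigl(\int_{\mathcal G}g(|\psi_n|)|\psi_n|^2-2G(|\psi_n|)\bigr)^{(p-1)/p}\|\psi_n\|$ via H\"older and $(g_4)$, and then invoking the embedding $Y\hookrightarrow L^q$; this needs neither $p<6$ nor anything beyond \eqref{3.1}. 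Your alternative up to the $L^\theta$ bound is sound: the uniform level bound, the identity $\int_{\mathcal G}\hat G(|\psi_n|)=\Phi_n(\psi_n)\le C$, the resulting bounds on $\int_{\mathcal G}G$ and $\int_{\mathcal G}g(|\psi_n|)|\psi_n|^2$, and in particular the componentwise identity $(mc_n^2-\omega_n)\|u_n\|_2^2+(mc_n^2+\omega_n)\|v_n\|_2^2=\int_{\mathcal G}g(|\psi_n|)(|u_n|^2-|v_n|^2)$ are all correct (the vertex terms do cancel by \eqref{2.5}--\eqref{2.6}, and $g\ge 0$ by $(g_4)$), yielding a clean uniform bound in $L^2$, hence in $L^q$ for $q\in[2,\theta]$. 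This is a nice and arguably simpler way to reach that range.

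The gap is in the passage to $q>\theta$, which you flag but do not actually close. The self-improving inequality you arrive at has the shape $X^2\le C+Cc_n^{-2}X^{p-2}+CmX^{(p-2)/2}$ with $X=\|\psi_n'\|_{L^2}$. For $2<p\le 4$ the nonlinear terms are absorbed by Young's inequality (or, at $p=4$, by the vanishing coefficient $c_n^{-2}$), but for $4<p<6$ the exponent $p-2$ exceeds $2$ and the inequality alone does not bound $X$: the admissible set is of the form $[0,C]\cup[T_n,\infty)$ with $T_n\sim c_n^{2/(p-4)}$, and since $n$ is a discrete parameter there is no continuity argument to rule out the unbounded branch. ``$c_n^{-1}\to 0$ lets one absorb the term'' is therefore an assertion, not a proof; one needs extra a priori input (for instance an $n$-dependent qualitative estimate eventually below $T_n$, or a bound on $\int_{\mathcal G}|\psi_n|^{2p-2}$ obtained from the $L^q$ bounds already in hand rather than from Gagliardo--Nirenberg). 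Independently of this, the detour through derivatives imports the hypotheses $p<6$ and \eqref{1.4}--\eqref{1.6}, which Lemma \ref{Lem4.1} does not assume: as stated (and as proved in the paper) the conclusion holds for every $p\in(2,\infty)$, so your argument, even once repaired, proves a strictly weaker statement.
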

\begin{proof}
By \eqref{3.8}, we have
	\begin{equation*}
		C \geq \max _{\hat{Y}} {\Phi}_n \geq {\Phi}_n\left(\psi_n\right)-\frac{1}{2}\left\langle{d\Phi}_n\left(\psi_n\right), \psi_n\right\rangle .
	\end{equation*}
Assumptions $\left(g_2\right)$ and $\left(g_3\right)$ imply that, for any $\varepsilon>0$ there is $C_{\varepsilon}>0$ satisfying
	\begin{equation}\label{4.1}
		g(|\psi|) \leq C( 1+|\psi|^{p-2})\leq \varepsilon +C_\varepsilon|\psi|^{p-2}.
	\end{equation}
	Therefore, by \eqref{4.1} and $(g_4)$, one has
	\begin{align*}
		& \int_{\{x \in \mathcal{G}:g(|\psi_n|)>\varepsilon\}} 
		\left(g(|\psi_n|)-\varepsilon  \right)|\psi_n|\cdot|\psi_n^{+}-\psi_n^{-}| dx\\
		&\leq \left( \int_{\{x \in \mathcal{G}:g(|\psi_n|)>\varepsilon\}}
		\left[ (g(|\psi_n|)-\varepsilon)|\psi_n|   \right]^\frac{p}{p-1} dx  \right)^\frac{p-1}{p}
		\left(\int_{\mathcal{G}} 
		|\psi_n^{+}-\psi_n^{-}|^p dx \right)^\frac{1}{p} \\
		& \leq   \left(\int_{\mathcal{G}}
		\left[C_{\varepsilon} |\psi_n|^{p-2}|\psi_n|\right]^\frac{p}{p-1} dx\right)^\frac{p-1}{p} 
		\left(\int_{\mathcal{G}} 
		|\psi_n^{+}+\psi_n^{-}|^p dx \right)^\frac{1}{p}\\
		& \leq  \left(\int_{\mathcal{G}} C|\psi_n|^{p-2}|\psi_n|^2dx \right)^\frac{p-1}{p}
		\|\psi_n\|_p \\
		& \leq  C_\theta \left(\int_{\mathcal{G}} \frac{\theta-2}{\theta}g(|\psi_n|)|\psi_n|^2 dx \right)^\frac{p-1}{p}
		\|\psi_n\|  \\
		& \leq  C_\theta\left( \int_{\mathcal{G}} g(|\psi_n|)|\psi_n|^2-2G(|\psi_n|) dx  \right)^\frac{p-1}{p}  \|\psi_n\|.
	\end{align*}
	By \eqref{2.16}, we obtain
	\begin{align*}
		& \left\langle{d\Phi}_n(\psi_n), \psi_n^{+}-\psi_n^{-}\right\rangle
		+\int_{\mathcal{G}}\left(g(|\psi_n|)-\varepsilon\right) \psi_n \cdot(\psi_n^{+}-\psi_n^{-}) d x 
		+\int_{\mathcal{G}} \varepsilon|\psi_n \| \psi_n^{+}-\psi_n^{-}| d x \\
		& = \left\langle{d\Phi}_n(\psi_n), \psi_n^{+}-\psi_n^{-}\right\rangle 
		+\int_{\mathcal{G}} g(|\psi_n|)\psi_n(\psi_n^{+}-\psi_n^{-})  \\
		& =\int_{\mathcal{G}} \left\langle\psi_n,\mathcal{D} (\psi_n^{+}-\psi_n^{-}) \right\rangle dx
		- \int_{\mathcal{G}} \omega_n \psi_n(\psi_n^{+}-\psi_n^{-}) dx\\
		& =\int_{\mathcal{G}} \langle\psi_n^{+},\mathcal{D}  \psi_n^{+} \rangle dx
		+\int_{\mathcal{G}} \langle\psi_n^{-},-\mathcal{D}  \psi_n^{-} \rangle dx
		- \int_{\mathcal{G}} \omega_n ( |\psi_n^{+}|^2-|\psi_n^{-}|^2)) dx \\
		& = \|\psi_n^{+}\|^2+\|\psi_n^{-}\|^2 -\omega_n( \|\psi_n^{+}\|_2^2-\|\psi_n^{-}\|_2^2)\\
		& \geq  \|\psi_n\|^2 -\omega_n\|\psi_n\|_2^2\\
		& \geq  \|\psi_n\|^2  - \frac{\omega_n}{m c_n^2}\|\psi_n\|^2\\
		& =\left(1-\frac{\omega_n}{m c_n^2}\right)\|\psi_n\|^2.
	\end{align*}
	It follows from \eqref{2.16} and \eqref{3.8} that
	$$
	\begin{aligned}
		\left(1-\frac{\omega_n}{m c_n^2}\right)\left\|\psi_n\right\|^2 
		\leq & \left\langle{d\Phi}_n\left(\psi_n\right), \psi_n^{+}-\psi_n^{-}\right\rangle
		+\int_{\mathcal{G}}\left(g\left(\left|\psi_n\right|\right)-\varepsilon\right) \psi_n \cdot\left(\psi_n^{+}-\psi_n^{-}\right) d x
		+\int_{\mathcal{G}} \varepsilon\left|\psi_n \| \psi_n^{+}-\psi_n^{-}\right| d x  \\
		\leq & \left\langle{d\Phi}_n\left(\psi_n\right), \psi_n^{+}-\psi_n^{-}\right\rangle
		+\int_{\left\{x \in\mathcal{G}: g\left(\left|\psi_n\right| \right)>\varepsilon\right\}}\left(g\left(\left|\psi_n\right|\right)-\varepsilon\right)\left|\psi_n \| \psi_n^{+}-\psi_n^{-}\right| d x \\
		& +\int_{\left\{x \in \mathcal{G}: g\left(\left|\psi_n\right|\right) \leq \varepsilon\right|}\left|g\left(\left|\psi_n\right|\right)-\varepsilon\right|\left|\psi_n\right|\left|\psi_n^{+}-\psi_n^{-}\right| d x+\int_{\mathcal{G}} \varepsilon\left|\psi_n \| \psi_n^{+}-\psi_n^{-}\right| d x \\
		\leq & C_\theta\left(2 {\Phi}_n\left(\psi_n\right)-\left\langle{d\Phi}_n\left(\psi_n\right), \psi_n\right\rangle\right)^{\frac{p-1}{p}}\left\|\psi_n\right\|
		+3 \varepsilon \int_{\mathcal{G}}\left|\psi_n || \psi_n^{+}-\psi_n^{-}\right| d x
		+o\left(\left\|\psi_n\right\|\right)  \\
		\leq & C\|\psi_n\| + 3 \varepsilon \int_{\mathcal{G}} |\psi_n || \psi_n^{+}+\psi_n^{-}| d x 
		+o\left(\left\|\psi_n\right\|\right)\\
		\leq & C\|\psi_n\|+ 3\varepsilon|\psi_n|_2^2  +o\left(\left\|\psi_n\right\|\right) \\
		\leq &  C\left\|\psi_n\right\|+C \varepsilon\left\|\psi_n\right\|^2+o\left(\left\|\psi_n\right\|\right) ,
	\end{aligned}
	$$
	which implies that $\left\|\psi_n\right\| \leq C$ uniformly in $n$. Then, by Sobolev imbedding theorem, we get $\left\{\psi_n\right\}$ is uniformly bounded in $L^q\left(\mathcal{G}, \mathbb{C}^2\right)$ with respect to $n$ for $q \in [2,\infty)$.
\end{proof}
\par 
As a consequence, we have the boundedness of the sequence $\{ \psi_n\}$ in $H^1\left(\mathcal{G}, \mathbb{C}^2\right)$.
\begin{Lem}\label{Lem4.2}
Under the assumptions $(g_1)-(g_5)$ with $2<p<6$, $\left\{\psi_n\right\}$ is uniformly bounded in $H^1\left(\mathcal{G}, \mathbb{C}^2\right)$ with respect to $n$.
\end{Lem}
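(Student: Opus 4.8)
The plan is to read off the $H^1$ bound directly from the Dirac system \eqref{1.1}, using as the only inputs the uniform $Y$-norm bound $\|\psi_n\|\le C$ established in the proof of Lemma \ref{Lem4.1} and the uniform $L^q$ bounds it provides. The first step is to extract from Lemma \ref{Lem2.3} not merely boundedness but a quantitative $L^2$ decay: since $\|\psi_n\|\le C$ uniformly, inequality \eqref{2.16} gives
$$\|\psi_n\|_{L^2}^2\le\frac{1}{mc_n^2}\|\psi_n\|^2\le\frac{C}{mc_n^2},$$
so that $\|u_n\|_{L^2},\|v_n\|_{L^2}\le C/c_n$. This rate is the crucial ingredient.

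Next I would write \eqref{1.1} componentwise. With $\psi_n=(u_n,v_n)^T$ the two scalar equations are $-ic_n v_n'=(\omega_n-mc_n^2+g(|\psi_n|))u_n$ and $-ic_n u_n'=(mc_n^2+\omega_n+g(|\psi_n|))v_n$, whence
$$v_n'=\frac{i}{c_n}(\omega_n-mc_n^2)u_n+\frac{i}{c_n}g(|\psi_n|)u_n,\qquad u_n'=\frac{i}{c_n}(mc_n^2+\omega_n)v_n+\frac{i}{c_n}g(|\psi_n|)v_n.$$
Thus the derivatives are expressed through lower-order quantities, and it remains only to estimate them in $L^2$.

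For $v_n'$ the linear coefficient $\frac{1}{c_n}(\omega_n-mc_n^2)$ is bounded (indeed it tends to $0$ by \eqref{1.6}), while the nonlinear term is controlled by $(g_3)$ through $\frac{1}{c_n}\|g(|\psi_n|)u_n\|_{L^2}\le\frac{C}{c_n}\bigl(\|u_n\|_{L^2}+\|\psi_n\|_{L^{2(p-1)}}^{p-1}\bigr)$, which is $O(c_n^{-1})$ since $2(p-1)<\infty$ and Lemma \ref{Lem4.1} bounds every such $L^q$-norm; hence $\|v_n'\|_{L^2}\to 0$. The decisive estimate is for $u_n'$: here the coefficient $\frac{1}{c_n}(mc_n^2+\omega_n)\sim 2mc_n$ diverges, and boundedness survives only through its cancellation with the $L^2$-decay of $v_n$, namely $\frac{mc_n^2+\omega_n}{c_n}\|v_n\|_{L^2}\le 2mc_n\cdot\frac{C}{c_n}=C$; the nonlinear contribution is again $O(c_n^{-1})$ by $(g_3)$ together with the uniform $L^q$ bounds (valid for all finite $q\ge2$), the restriction $2<p<6$ corresponding to the subcritical regime for the limiting equation \eqref{1.2}. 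Adding the bounds on $\|\psi_n\|_{L^2}$, $\|u_n'\|_{L^2}$ and $\|v_n'\|_{L^2}$ then yields $\|\psi_n\|_{H^1(\mathcal{G},\mathbb{C}^2)}\le C$ uniformly in $n$.

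The main obstacle is precisely this estimate for $u_n'$: the factor $c_n$ produced by the mass term $mc_n^2\sigma_3$ must be compensated exactly by the $c_n^{-1}$ decay of $\|v_n\|_{L^2}$. The argument therefore rests essentially on the sharp quantitative form of Lemma \ref{Lem2.3}, and not merely on the boundedness of $\psi_n$ in $Y$; by contrast, the $v_n'$ estimate comes with room to spare and already foreshadows the convergence $v_n\to 0$ in $H^1$ asserted in Theorem \ref{Thm1.2}.
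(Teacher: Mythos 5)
Your componentwise strategy and your estimate of $\|v_n'\|_{L^2}$ are fine, but the argument fails at its self-declared crucial point: the claim $\|\psi_n\|_{L^2}=O(1/c_n)$. You obtain it by feeding the bound $\|\psi_n\|\le C$ from the proof of Lemma \ref{Lem4.1} into Lemma \ref{Lem2.3}. This cannot be the right input: since $\|\psi_n\|^2\ge mc_n^2\|\psi_n\|_{L^2}^2\ge mc_n^2\|u_n\|_{L^2}^2$, a genuinely $n$-uniform bound on the $Y$-norm (which depends on $n$ through $\mathcal{D}_n$) would force $u_n\to 0$ in $L^2$, contradicting Lemma \ref{Lem4.4} and the nontriviality of the limit $u$ in Theorem \ref{Thm1.2}. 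The only conclusion of Lemma \ref{Lem4.1} that can safely be carried into the rest of Section 4 is the one actually stated in that lemma, namely the uniform $L^q$ bounds; the $O(1/c_n)$ decay holds for the \emph{second component} $v_n$ only, and it is proved in Lemma \ref{Lem4.3}, whose proof already uses Lemma \ref{Lem4.2} --- so invoking it here would be circular. With only $\|v_n\|_{L^2}\le C$ available, your key estimate degenerates to $\|u_n'\|_{L^2}\le 2mc_n\|v_n\|_{L^2}+O(1/c_n)=O(c_n)$, which is useless; and trying to close the loop via $\|v_n\|_{L^2}\lesssim \frac{1}{2mc_n}\|u_n'\|_{L^2}$ from the second equation only returns $\|u_n'\|_{L^2}\le(1+o(1))\|u_n'\|_{L^2}+O(1/c_n)$, with no gain.

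The paper's proof avoids the issue entirely: it squares the equation, uses the identity $\|\mathcal{D}_n\psi_n\|_{L^2}^2=m^2c_n^4\|\psi_n\|_{L^2}^2+c_n^2\|\psi_n'\|_{L^2}^2$, and lets the term $m^2c_n^4\|\psi_n\|_{L^2}^2$ on the left absorb $\omega_n^2\|\psi_n\|_{L^2}^2$ on the right (possible because $\omega_n<mc_n^2$); the remaining nonlinear terms are estimated via Gagliardo--Nirenberg (Lemma \ref{Lem2.5}) by $C\|\psi_n'\|_{L^2}^{p-2}$ and $C\omega_n\|\psi_n'\|_{L^2}^{(p-2)/2}$, and after dividing by $c_n^2$ the resulting powers of $\|\psi_n'\|_{L^2}$ can be absorbed precisely because $p<6$. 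If you want to keep a componentwise argument, you need an independent source of smallness for $\|v_n\|_{L^2}$ \emph{before} the $H^1$ bound is known; as written, the proposal does not provide one.
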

\begin{proof}
	Observe that $\psi_n$ satisfy
	\begin{equation}\label{4.2}
		\mathcal{D}_n \psi_n=\omega_n \psi_n+g(|\psi_n|) \psi_n .
	\end{equation}
	Moreover,
	\begin{equation}\label{4.3}
		\left\| \mathcal{D}_n \psi_n\right\|_{L^2}^2=\left\|\omega_n \psi_n+g(|\psi_n|)\psi_n\right\|_{L^2}^2 .
	\end{equation}
	An easy calculation shows that
	\begin{equation}\label{4.4}
		\begin{aligned}
			&\|\omega_n \psi_n+g(|\psi_n|) \psi_n\|_{L^2}^2 \\
			&= \int_{\mathcal{G}}(\omega_n \psi_n+g(|\psi_n|) \psi_n)^2 d x \\
			&=\omega_n^2\|\psi_n\|_{L^2}^2 
			+\int_{\mathcal{G}}(g(|\psi_n|))^2|\psi_n|^2  d x
			+2\omega_n\int_{\mathcal{G}}g(|\psi_n|)|\psi_n|^2 d x \\
			&\leq \omega_n^2\|\psi_n\|_{L^2}^2 
			+ C_1 \int_{\mathcal{G}} \left(1+ |\psi_n|^{p-2}  \right)^2|\psi_n|^2  dx
			+ C_2\omega_n \int_{\mathcal{G}}  (|\psi_n|^2 +|\psi_n|^p    ) dx         \\
			&\leq\omega_n^2\|\psi_n\|_{L^2}^2 
			+ C_3 \int_{\mathcal{G}} \left(|\psi_n|^2+ |\psi_n|^{2p-2}  \right)  dx
			+ C_2\omega_n \int_{\mathcal{G}}  (|\psi_n|^2 +|\psi_n|^p    ) dx          \\
			& =(\omega_n^2+C_3 + C_2\omega_n   )\|\psi_n\|_{L^2}^2 
			+ C_3 \int_{\mathcal{G}}  |\psi_n|^{2p-2}    dx
			+ C_2\omega_n \int_{\mathcal{G}}  |\psi_n|^p   dx   .
		\end{aligned}
	\end{equation}
	Now, we estimate the right-hand side of \eqref{4.4}. By Lemma \ref{Lem4.1} with $p \in (2,6)$ and Lemma \ref{Lem2.5} with $q=2 p-2$, we have
	\begin{equation}\label{4.5}
		\begin{aligned}
			\int_{\mathcal{G}}|\psi_n|^{2 p-2} d x
			&\leq C_q \|\psi_n\|_{L^2}^{(\frac{1}{2}+\frac{1}{q})(2p-2)}
			\|\psi_n^{\prime}\|_{L^2}^{(\frac{1}{2}-\frac{1}{q}) (2p-2)}\\
			& =C_q \|\psi_n\|_{L^2}^{p} \|\psi_n^{\prime}\|_{L^2}^{p-2}\\
			& \leq C \|\psi_n^{\prime}\|_{L^2}^{p-2}.
		\end{aligned}
	\end{equation}
	Similarly, let $q=p$ in Lemma \ref{Lem2.5},
	\begin{equation}\label{4.6}
		\begin{aligned}
			\int_{\mathcal{G}}|\psi_n|^{p} d x
			& \leq C_q \|\psi_n\|_{L^2}^{(\frac{1}{2}+\frac{1}{q})p} \|\psi_n^{\prime}\|_{L^2}^{(\frac{1}{2}-\frac{1}{q})p}\\
			&  =C_q \|\psi_n\|_{L^2}^{\frac{p+2}{2}} \|\psi_n^{\prime}\|_{L^2}^{\frac{p-2}{2}}\\
			& \leq C \|\psi_n^{\prime}\|_{L^2}^{\frac{p-2}{2}}.
		\end{aligned}
	\end{equation}
	On the other hand, the left-hand side of \eqref{4.4} can be rewritten as
	\begin{equation}\label{4.7}
		\begin{aligned}
			\|\mathcal{D}_n \psi_n \|_{L^2}^2
			&=\int_{\mathcal{G}}|\mathcal{D}_n \psi_n |^2 dx  \\
			& =\int_{\mathcal{G}} \left| -i c_n \frac{d}{dx} \sigma_1 \psi_n+m c_n^2  \sigma_3\psi_n\right|^2 d x  \\
			& =\int_{\mathcal{G}}m^2c_n^4|\psi_n|^2+ c_n^2|\psi_n^{\prime}|^2 d x\\
			& =m^2c_n^4\|\psi_n\|_2^2+ c_n^2\|\psi_n^{\prime}\|_2^2.
		\end{aligned}
	\end{equation}
	Combining \eqref{4.3}-\eqref{4.7}, we have
	\begin{align*}
		c_n^2\|\psi_n^{\prime}\|_{L^2}^2+m^2c_n^4\|\psi_n\|_{L^2}^2
		& \leq(\omega_n^2+C_3 + C_2\omega_n   )\|\psi_n\|_{L^2}^2 
		+ C_3 \int_{\mathcal{G}}  |\psi_n|^{2p-2}    dx
		+ C_2\omega_n \int_{\mathcal{G}}  |\psi_n|^p   dx\\
		& \leq  
		(\omega_n^2+C_3 + C_2\omega_n   )\|\psi_n\|_{L^2}^2
		+  C_4 \|\psi_n^{\prime}\|_{L^2}^{p-2} 
		+C_5\omega_n  \|\psi_n^\prime\|_{L^2}^{\frac{p-2}{2}} .
	\end{align*}
	Hence, we obtain
	\begin{align*}
		\|\psi_n^{\prime}\|_{L^2}^2
		& \leq \frac{\omega_n^2+C_3 + C_2\omega_n}{c_n^2}\|\psi_n\|_{L^2}^2-\frac{m^2c_n^4}{c_n^2}\|\psi_n\|_{L^2}^2 
		+\frac{C_4}{c_n^2} \|\psi_n^{\prime}\|_{L^2}^{p-2} 
		+\frac{C_5\omega_n }{c_n^2}\|\psi_n^{\prime}\|_{L^2}^{\frac{p-2}{2}}      \\
		&  \leq   \frac{C_3 + C_2\omega_n}{c_n^2}\|\psi_n\|_{L^2}^2   
		+\frac{C_4}{c_n^2} \|\psi_n^{\prime}\|_{L^2}^{p-2} 
		+C_5  m \|\psi_n^{\prime}\|_{L^2}^{\frac{p-2}{2}}         \\
		&   \leq C_5  m \|\psi_n^{\prime}\|_{L^2}^{\frac{p-2}{2}} .
	\end{align*}
	Therefore, we have
	\begin{equation}\label{4.8}
		\|\psi_n^{\prime}\|_{L^2} \leq C .
	\end{equation}
	Finally, the Lemma follows from Lemma \ref{Lem4.1} and \eqref{4.8}.
\end{proof}

\subsection{Proof of the first part of Theorem \ref{Thm1.2}}
The section is devoted to the limit of the solutions $\psi_n=\left(u_n, v_n\right)^T$ of \eqref{1.1} as $n \rightarrow \infty$. We will prove that the first components of the sequence $\left\{u_n\right\}$ converge to a solution of the NLSE \eqref{1.2} and the second one $\left\{v_n\right\}$ converge to zero.

In addition, we define
$$
a_n:=(m c_n^2-\omega_n) b_n \text { and } \quad b_n:=\frac{m c_n^2+\omega_n}{c_n^2}, \quad \text { for all } n \in \mathbb{N}.
$$
Dividing \eqref{3.1} by $ c_n^2$, one has 
$$ \frac{C_1}{c_n^2} \leq m - \frac{\omega_n}{c_n^2} \leq \frac{C_2}{c_n^2}.
$$
Letting $ c_n\rightarrow \infty $, we have
$$ m = \frac{\omega_n}{c_n^2}=\frac{ mc_n^2+\omega_n}{c_n^2} -m.
$$
Thus,
\begin{equation}\label{4.9}
	b_n \rightarrow 2 m .
\end{equation}
By \eqref{1.6}, we obtain
$$ a_n=(m c_n^2-\omega_n) b_n \rightarrow -\frac{\nu}{2m} \cdot 2m =-\nu,
$$
Hence, 
\begin{equation}\label{4.10}
	a_n \rightarrow -\nu.
\end{equation}
\par 
First we prove the second components of the sequence $\left\{\psi_n\right\}$ converge to zero, that is $v_n \rightarrow 0$ in $H^1(\mathcal{G})$. It is worth to point out that we can also estimate the speed of convergence of $\left\{v_n\right\}$.
\begin{Lem}\label{Lem4.3}
Under the assumptions $(g_1)-(g_5)$ with $2<p<6$, the sequence $\left\{v_n\right\}$ converges to 0 in $H^1(\mathcal{G})$ as $n \rightarrow \infty$. Moreover, there holds
\begin{equation}\label{4.11}
 \left\|v_n\right\|_{H^1}=\mathcal{O}\left(\frac{1}{c_n}\right) \text { as } n \rightarrow \infty.
\end{equation}
\end{Lem}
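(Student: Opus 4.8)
The plan is to avoid any variational machinery and instead read the size of $v_n$ off the Dirac system directly, converting a pointwise algebraic relation into $L^2$ estimates by means of the uniform bounds already established in Lemmas \ref{Lem4.1} and \ref{Lem4.2}. Writing $\psi_n=(u_n,v_n)^T$ and expanding $\mathcal{D}_n\psi_n=\omega_n\psi_n+g(|\psi_n|)\psi_n$ with the explicit form of $\sigma_1,\sigma_3$, I would obtain the coupled system
\begin{equation*}
(mc_n^2-\omega_n)u_n-ic_n v_n'=g(|\psi_n|)u_n,\qquad -ic_n u_n'-(mc_n^2+\omega_n)v_n=g(|\psi_n|)v_n.
\end{equation*}
The second identity already exhibits the mechanism of the limit: since $mc_n^2+\omega_n\sim 2mc_n^2$ is of order $c_n^2$ while $\|u_n'\|_{L^2}$ stays bounded, the quantity $v_n$ is forced to be of size $c_n^{-1}$.

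For the $L^2$ bound on $v_n$ I would take $L^2$ norms in the second equation, obtaining $(mc_n^2+\omega_n)\|v_n\|_{L^2}\le c_n\|u_n'\|_{L^2}+\|g(|\psi_n|)v_n\|_{L^2}$. By Lemma \ref{Lem4.2} one has $c_n\|u_n'\|_{L^2}\le Cc_n$. For the nonlinear term I would invoke \eqref{4.1}, namely $g(s)\le\varepsilon+C_\varepsilon s^{p-2}$, and split $\|g(|\psi_n|)v_n\|_{L^2}\le\varepsilon\|v_n\|_{L^2}+C_\varepsilon\||\psi_n|^{p-2}v_n\|_{L^2}$, where the last factor is controlled by $\|\psi_n\|_{L^{2(p-1)}}^{p-1}\le C$ thanks to Lemma \ref{Lem4.1}. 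Absorbing $\varepsilon\|v_n\|_{L^2}$ into the left-hand side and using $mc_n^2+\omega_n-\varepsilon\ge\tfrac12 mc_n^2$ for $n$ large yields $\|v_n\|_{L^2}\le (Cc_n+C_\varepsilon)/(\tfrac12 mc_n^2)=\mathcal{O}(c_n^{-1})$.

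For $\|v_n'\|_{L^2}$ I would instead use the first equation, which isolates $ic_n v_n'=(mc_n^2-\omega_n)u_n-g(|\psi_n|)u_n$. The decisive point here is estimate \eqref{3.1}: the combination $mc_n^2-\omega_n$ remains bounded by $C_2$ uniformly in $n$, so $\|(mc_n^2-\omega_n)u_n\|_{L^2}\le C_2\|u_n\|_{L^2}\le C$, while $\|g(|\psi_n|)u_n\|_{L^2}\le C$ again by Lemma \ref{Lem4.1}. Dividing by $c_n$ gives $\|v_n'\|_{L^2}\le C/c_n=\mathcal{O}(c_n^{-1})$. Combining the two displays furnishes $\|v_n\|_{H^1}=\mathcal{O}(c_n^{-1})$, and in particular $v_n\to 0$ in $H^1(\mathcal{G})$ as $n\to\infty$.

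I expect the main obstacle to be the honest, $n$-uniform control of the nonlinear terms $g(|\psi_n|)v_n$ and $g(|\psi_n|)u_n$ in $L^2$: one must ensure that these do not secretly scale like $c_n^2$, which would destroy the rate. This is precisely where the growth restriction $(g_3)$ enters, together with the uniform $L^q$ bounds of Lemma \ref{Lem4.1} (valid for every $q\in[2,\infty)$) and the uniform $H^1$ bound of Lemma \ref{Lem4.2} (which is where the hypothesis $2<p<6$ is used); without these the term $\||\psi_n|^{p-2}\psi_n\|_{L^2}$ could not be bounded independently of $n$, and the asymmetric roles of $mc_n^2-\omega_n$ (bounded) versus $mc_n^2+\omega_n$ (of order $c_n^2$) could not be exploited.
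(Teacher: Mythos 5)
Your proposal is correct and follows essentially the same route as the paper: both proofs read the estimates directly off the two scalar equations of the Dirac system, using the boundedness of $mc_n^2-\omega_n$ from \eqref{3.1} together with the uniform $L^q$ and $H^1$ bounds of Lemmas \ref{Lem4.1} and \ref{Lem4.2} to get $\|v_n'\|_{L^2}=\mathcal{O}(c_n^{-1})$ from the first equation and $\|v_n\|_{L^2}=\mathcal{O}(c_n^{-1})$ from the second (where $mc_n^2+\omega_n\sim 2mc_n^2$ dominates). The only cosmetic difference is in the $L^2$ bound for $v_n$: you absorb an $\varepsilon\|v_n\|_{L^2}$ term into the left-hand side, whereas the paper bounds $\|c_n^{-2}g(|\psi_n|)v_n\|_{L^2}\le C c_n^{-2}$ outright and concludes by the triangle inequality.
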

\begin{proof}
Since $\psi_n=\left(u_n, v_n\right)^T$ is a solution of the $\operatorname{NLDE}$ \eqref{1.1} with $c=c_n, \omega=\omega_n $ for every $n \in \mathbb{N}$, we have 
\begin{align*}
  -ic_n\frac{d}{dx}\sigma_1\psi_n+mc_n^2\sigma_3\psi_n-\omega_n\psi_n
   =& -  ic_n\frac{d}{dx}
    \left(\begin{array}{ll}
0 & 1 \\
1 & 0
\end{array}\right)
\binom{u_n}{v_n} +mc_n^2\left(\begin{array}{ll}
1 & 0 \\
0 & -1
\end{array}\right)
\binom{u_n}{v_n}-\omega_n\binom{u_n}{v_n}\\
   =&   -ic_n\binom{v_n^{\prime}}{u_n^{\prime}} +m c_n^2 \binom{u_n}{-v_n}-\omega_n\binom{u_n}{v_n}  \\
  = &  g(|\psi_n|) \binom{u_n}{v_n} .
\end{align*} 
Then we can rewrite the equation as follow:
\begin{equation}\label{4.12}
-i c_n v_n^{\prime}+m c_n^2 u_n-\omega_n u_n=g(|\psi_n|)u_n ,
\end{equation}
\begin{equation}\label{4.13}
-i c_n u_n^{\prime}-m c_n^2 v_n-\omega_n v_n= g(|\psi_n|) v_n .
\end{equation}
Dividing \eqref{4.12} by $c_n$, then its $L^2$-norm squared reads
$$
\left\|v_n^{\prime}\right\|_{L^2}^2=\left\|\frac{m c_n^2-\omega_n}{c_n} u_n-\frac{1}{c_n}g(|\psi_n|) u_n\right\|_{L^2}^2 .
$$
Therefore, by \eqref{3.1} , ($g_3$) and Lemma \ref{Lem4.2}, we have
\begin{align*}
  \|v_n^{\prime}\|_{L^2}^2
   & =\left\|\frac{m c_n^2-\omega_n}{c_n} u_n-\frac{1}{c_n}g(|\psi_n|) u_n\right\|_{L^2}^2 \\
   &=\int_{\mathcal{G}} \left(\frac{m c_n^2-\omega_n}{c_n} u_n-\frac{1}{c_n}g(|\psi_n|) u_n \right)^2 d x \\
   & =\int_{\mathcal{G}}\frac{(m c_n^2-\omega_n)^2}{c_n^2} |u_n|^2 dx
       +\int_{\mathcal{G}}\frac{1}{c_n^2}(g(|\psi_n|))^2|u_n|^2  dx
       -\int_{\mathcal{G}}2 \frac{m c_n^2-\omega_n}{c_n^2} g(|\psi_n|)|u_n| ^2       d x \\
   & \leq \frac{(m c_n^2-\omega_n)^2}{c_n^2} \|u_n\|_{L^2}^2 
       +\frac{1}{c_n^2}\int_{\mathcal{G}}(g(|\psi_n|))^2 |u_n|^2 d x\\
   & \leq \frac{(m c_n^2-\omega_n)^2}{c_n^2} \|u_n\|_{L^2}^2 
       +\frac{C_1}{c_n^2}\int_{\mathcal{G}}(1+|\psi_n|^{p-2}) ^2 |u_n|^2 d x   \\
   & \leq \frac{(m c_n^2-\omega_n)^2}{c_n^2}\|u_n\|_{L^2}^2 
        +\frac{C_2}{c_n^2}\int_{\mathcal{G}}(|u_n|^{2}+|\psi_n|^{2(p-2)}|u_n|^2 )d x  \\
   & \leq  \frac{(m c_n^2-\omega_n)^2}{c_n^2}\|u_n\|_{L^2}^2 
          + \frac{C_3}{c_n^2} \left [\|u_n\|_{L^2}^2  + \left(\int_{\mathcal{G}}|u_n|^p dx \right)^{\frac{2}{p}} \left(\int_{\mathcal{G}}|\psi_n|^{2p} dx \right)^{\frac{p-2}{p}}         
          \right]        \\
   & \leq  \frac{C}{c_n^2},
\end{align*}
that is
\begin{equation}\label{4.14}
 \left\|v_n^{\prime}\right\|_{L^2}=\mathcal{O}\left(\frac{1}{c_n}\right) .
\end{equation}
On the other hand, dividing \eqref{4.13} by $c_n^2$ and using Lemma \ref{Lem4.2}, we can infer that
\begin{equation}\label{4.15}
\begin{aligned}
\left\|-i \frac{1}{c_n} u_n^{\prime}-\frac{m c_n^2+\omega_n}{c_n^2} v_n\right\|_{L^2}^2
   =\left\|\frac{1}{c_n^2} g(|\psi_n|)v_n\right\|_{L^2}^2 
= \frac{1}{c_n^4}\int_{\mathcal{G}} (g(|\psi_n|))^2| v_n|^2 dx      
     \leq  \frac{C}{c_n^4}.
\end{aligned}
\end{equation}
By \eqref{4.15}, we have
\begin{align*}
   \left\|\frac{1}{c_n^2}g(|\psi_n|) v_n\right\|_{L^2}^2
   &  =\left\|-i \frac{1}{c_n} u_n^{\prime}-\frac{m c_n^2+\omega_n}{c_n^2} v_n\right\|_{L^2}^2 \\
   &  =\int_{\mathcal{G}} \left| -i \frac{1}{c_n} u_n^{\prime}-\frac{m c_n^2+\omega_n}{c_n^2} v_n   \right| ^2 dx    \\
   & =\int_{\mathcal{G}} \left( \frac{1}{c_n^2}|u_n^{\prime}|^2+ \frac{(m c_n^2+\omega_n)^2}{c_n^4} |v_n |^2\right)d x \\
   & =\frac{(m c_n^2+\omega_n)^2}{c_n^4}  \| v_n\|_{L^2}^2 +  \frac{1}{c_n^2} \|u_n^{\prime}\|_{L^2}^2.
\end{align*}
Therefore, by \eqref{3.1}, \eqref{4.15} and Lemma \ref{Lem4.2}, we have
$$
\begin{aligned}
\frac{m c_n^2+\omega_n}{c_n^2}\left\|v_n\right\|_{L^2} 
& \leq\left\|-i \frac{1}{c_n} u_n^{\prime}-\frac{m c_n^2+\omega_n}{c_n^2} v_n\right\|_{L^2}
      +\frac{1}{c_n}\left\|u_n^{\prime}\right\|_{L^2} 
 \leq \frac{C}{c_n^2}+\frac{C}{c_n} 
 =\mathcal{O}\left(\frac{1}{c_n}\right)
\end{aligned}
$$
which, together with \eqref{4.14}, implies that $\left\|v_n\right\|_{H^1}=\mathcal{O}\left(\frac{1}{c_n}\right)$ as $n \rightarrow \infty$.
\end{proof}
\par 
Next the proof of the convergence of the first components $u_n$ will be divided into several parts. As a first step, we prove the sequence $\left\{u_n\right\}$ is bounded away from zero in $H^1\left(\mathcal{G}\right)$.
\begin{Lem}\label{Lem4.4}
Under the assumptions $(g_1)-(g_5)$ with $2<p<6$, there exists $\rho>0$ such that
\begin{equation}\label{4.16}
\inf _{n \in \mathbb{N}}\left\|u_n\right\|_{H^1} \geq \rho>0 .
\end{equation}
\end{Lem}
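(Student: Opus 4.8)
The plan is to argue by contradiction. Suppose \eqref{4.16} fails; then along a subsequence (not relabelled) $\|u_n\|_{H^1}\to 0$. First I would record that each $u_n\not\equiv 0$: if $u_n\equiv 0$, then on the support of $v_n$ equation \eqref{4.13} reduces to $\bigl(mc_n^2+\omega_n+g(|v_n|)\bigr)v_n=0$, and since $mc_n^2+\omega_n>0$ while $g>0$ on $(0,\infty)$ by $(g_4)$, this forces $v_n\equiv 0$, i.e. $\psi_n\equiv 0$, contradicting that $\psi_n$ is a nontrivial bound state. Combining $\|u_n\|_{H^1}\to 0$ with Lemma \ref{Lem4.3} (which gives $\|v_n\|_{H^1}\to 0$) yields $\psi_n\to 0$ in $H^1(\mathcal{G},\mathbb{C}^2)$, hence, by the embedding \eqref{2.9}, $\|\psi_n\|_{L^q}\to 0$ for every $q\in[2,\infty)$; in particular $\|\psi_n\|_{L^p}\to 0$.

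Next I would establish a uniform $L^\infty$ bound, which is the crucial quantitative input. Applying Lemma \ref{Lem2.5} with $q=\infty$ together with Lemma \ref{Lem4.2} gives $\|\psi_n\|_{L^\infty}\le C_\infty\|\psi_n\|_{L^2}^{1/2}\|\psi_n'\|_{L^2}^{1/2}\le C\|\psi_n\|_{H^1}\le C$, uniformly in $n$. By $(g_3)$ this entails $g(|\psi_n(x)|)\le C_1\bigl(1+\|\psi_n\|_{L^\infty}^{p-2}\bigr)\le C_g$ for all $x\in\mathcal{G}$ and all $n$.

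The heart of the argument is then a scalar identity for $u_n$. Since $D_n:=mc_n^2+\omega_n+g(|\psi_n|)>0$, equation \eqref{4.13} can be solved pointwise as $v_n=-ic_n u_n'/D_n$. Substituting this into \eqref{4.12}, multiplying by $\overline{u_n}$, integrating over $\mathcal{G}$ and integrating by parts edge by edge, the boundary contributions $\sum_{\mathrm v}\overline{u_n(\mathrm v)}\sum_{e\succ \mathrm v}(v_n)_e(\mathrm v)_{\pm}$ vanish thanks to the Kirchhoff conditions \eqref{2.5}--\eqref{2.6}, leaving
\[
\int_{\mathcal{G}}\frac{c_n^2}{D_n}\,|u_n'|^2\,dx+(mc_n^2-\omega_n)\int_{\mathcal{G}}|u_n|^2\,dx=\int_{\mathcal{G}}g(|\psi_n|)\,|u_n|^2\,dx.
\]
On the left-hand side, the uniform bound $g(|\psi_n|)\le C_g$ gives $c_n^2/D_n\ge 1/(b_n+C_g/c_n^2)$, and since $b_n\to 2m$ by \eqref{4.9} one has $c_n^2/D_n\ge c'>0$ for $n$ large; together with $mc_n^2-\omega_n\ge C_1>0$ from \eqref{3.1}, the left-hand side is at least $\min(c',C_1)\,\|u_n\|_{H^1}^2$. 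On the right-hand side, \eqref{4.1} with Hölder's inequality and the Sobolev embedding yields $\int_{\mathcal{G}}g(|\psi_n|)|u_n|^2\,dx\le \varepsilon\|u_n\|_{L^2}^2+C_\varepsilon\|\psi_n\|_{L^p}^{p-2}\|u_n\|_{L^p}^2\le\bigl(\varepsilon+C\,C_\varepsilon\|\psi_n\|_{L^p}^{p-2}\bigr)\|u_n\|_{H^1}^2$.

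Dividing this inequality by $\|u_n\|_{H^1}^2>0$ and letting $n\to\infty$ (so that $\|\psi_n\|_{L^p}\to 0$ by the first step) gives $\min(c',C_1)\le\varepsilon$, which is absurd once $\varepsilon$ is fixed smaller than $\min(c',C_1)$. This contradiction establishes \eqref{4.16}. The main obstacle, and the step deserving the most care, is controlling the $x$-dependent weight $c_n^2/D_n$ uniformly in $n$: this is precisely where the uniform $L^\infty$ bound is indispensable, for without it the weight could degenerate on the region where $|\psi_n|$ is large, and the coercivity of the left-hand side would be lost; the vanishing of the boundary terms in the integration by parts is the secondary point, and it is handled by the Kirchhoff vertex conditions.
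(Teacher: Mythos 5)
Your proof is correct, and it reaches the contradiction by a genuinely different (and in places cleaner) route than the paper's. The paper also argues by contradiction and also eliminates $v_n$ via $v_n=-ic_nu_n'/D_n$ (its \eqref{4.20}), but it then expands everything into the non-divergence-form equation \eqref{4.25}, $-u_n''+a_nu_n=-\tfrac{i}{c_n}\left[g(|\psi_n|)v_n\right]'+b_ng(|\psi_n|)u_n$, tests with $\bar u_n$, and must prove the claim \eqref{4.27} that the entire right-hand side is $o\left(\|u_n\|_{H^1}^2\right)$; the cost is the term $I_2=-\tfrac{i}{c_n}\int_{\mathcal G}g'(|\psi_n|)v_n\bar u_n\,dx$, which forces a differentiation of $g(|\psi_n|)$ and a delicate H\"older computation using $2<p<6$. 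You instead keep the divergence form $-c_n^2\bigl(u_n'/D_n\bigr)'+(mc_n^2-\omega_n)u_n=g(|\psi_n|)u_n$, so that after testing with $\bar u_n$ the flux $c_n^2u_n'/D_n=ic_nv_n$ is precisely the quantity annihilated at each vertex by \eqref{2.6} together with the continuity \eqref{2.5} of $u_n$, and the only nonlinear input is the uniform bound $g(|\psi_n|)\le C_g$ obtained from Lemma \ref{Lem2.5} with $q=\infty$ and Lemma \ref{Lem4.2}; no derivative of $g$ ever appears, and the coercivity of the weight $c_n^2/D_n\ge c'>0$ is exactly where that $L^\infty$ bound is spent. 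What each approach buys: yours avoids the $I_2$ estimate entirely and makes the role of the Kirchhoff conditions in the integration by parts explicit (the paper passes over the vertex boundary terms in \eqref{4.26} silently, where they in fact only cancel after combining with the $\left[g(|\psi_n|)v_n\right]'$ term); the paper's version produces \eqref{4.25} in the exact form reused in Lemma \ref{Lem4.6} to identify the limiting NLSE, so its extra work is amortized later. Two small merits of your write-up worth keeping are the explicit verification that $u_n\not\equiv0$ (needed before dividing by $\|u_n\|_{H^1}^2$, and only implicit in the paper) and the observation that the contradiction hypothesis together with Lemma \ref{Lem4.3} forces $\|\psi_n\|_{L^p}\to0$, which is what kills the superlinear part of the right-hand side.
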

\begin{proof}
To the contrary, let us assume that, up to a subsequence,
\begin{equation}\label{4.17}
 \lim _{n \rightarrow \infty}\left\|u_n\right\|_{H^1}=0 .
\end{equation}
Dividing \eqref{4.12} by $c_n$, one has
\begin{equation}\label{4.18}
-i v_n^{\prime}=\frac{1}{c_n}\left[g(|\psi_n|) u_n-\left(m c_n^2-\omega_n\right) u_n\right] .
\end{equation}
Thus, by H\"older's inequality, \eqref{3.1}, Lemma \ref{Lem4.2}, we can deduce from \eqref{4.18} that
\begin{equation}\label{4.19}
\begin{aligned}
 \| v_n^{\prime}\|_{L^2}^2 
   &=\|-i v_n^{\prime}\|_{L^2}^2 = \int_{\mathcal{G}}\frac{1}{c_n^2} \left[g(|\psi_n|) u_n-\left(m c_n^2-\omega_n\right) u_n\right] ^2 d x \\
   & =\int_{\mathcal{G}} \frac{1}{c_n^2} (g(|\psi_n|))^2|u_n|^2dx
             +\int_{\mathcal{G}}\frac{(m c_n^2-\omega_n)^2}{c_n^2}|u_n|^2 dx 
             - 2 \int_{\mathcal{G}}\frac{(m c_n^2-\omega_n)}{c_n^2}g(|\psi_n|)|u_n|^2
          d x \\
   &\leq \frac{(m c_n^2-\omega_n)^2}{c_n^2}\|u_n\|_{L^2}^2
          + \frac{1}{c_n^2}\int_{\mathcal{G}} (g(|\psi_n|))^2|u_n|^2  d x\\
   &\leq  \frac{C}{c_n^2}\|u_n\|_{H^1}^2
          + \frac{C_1}{c_n^2}\int_{\mathcal{G}} (1+|\psi_n|^{p-2})^2 |u_n|^2  dx    \\
   &\leq   \frac{C}{c_n^2}\|u_n\|_{H^1}^2 + \frac{C_2}{c_n^2}\int_{\mathcal{G}} \left( |u_n|^2+|u_n|^2 |\psi_n|^{2(p-2)} \right) dx     \\
   &\leq \frac{C}{c_n^2}\|u_n\|_{H^1}^2 + \frac{C_2}{c_n^2} \|u_n\|_{H^1}^2
              +  \frac{C_3}{c_n^2} \left(\int_{\mathcal{G}}|u_n|^p  dx\right) ^{\frac{2}{p}}
                    \left(\int_{\mathcal{G}}|\psi_n|^{2p}  dx\right) ^{\frac{p-2}{p}}         \\ 
   &\leq  \frac{C_4}{c_n^2}\|u_n\|_{H^1}^2 +\frac{C_5}{c_n^2}\|u_n\|_{L^p}^2   
\end{aligned}
\end{equation}
where $C_i $ are constant independent of $n$.
Together with $p \in (2, 6)$, and Sobolev embedding inequality, we have
$$
\left\|v_n^{\prime}\right\|_{L^2}^2 \leq \frac{C}{c_n^2}\left\|u_n\right\|_{H^1}^2 .
$$
In addition, \eqref{4.13} is equivalent to the following equation
\begin{equation}\label{4.20}
 v_n\left(1+\frac{g(|\psi_n|)}{m c_n^2+\omega_n}\right)=-i \frac{c_n}{m c_n^2+\omega_n} u_n^{\prime},
\end{equation}
so that, combining with \eqref{4.9} and \eqref{4.10}
\begin{align*}
 \|v_n\|_{L^2}^2
  &\leq \frac{c_n^2}{(m c_n^2+\omega_n)^2}\|u_n^{\prime}\|_{L^2}^2   
  = \frac{(c_n^2)^2}{c_n^2(m c_n^2+\omega_n)^2}\|u_n^{\prime}\|_{L^2}^2  \\
  &\leq \frac{1}{c_n^2 b_n^2}\| u_n\|_{H^1}^2  
  \leq \frac{C}{c_n^2 }\| u_n\|_{H^1}^2 .
\end{align*}
Therefore,
\begin{equation}\label{4.21}
 \left\|v_n\right\|_{H^1} \leq \frac{C}{c_n}\left\|u_n\right\|_{H^1} .
\end{equation}
Multiplying \eqref{4.20} by $ -i $  and derivativing, we have 
\begin{align*}
  -iv_n^{\prime}
   & =-\frac{c_n }{mc_n^2+\omega_n} \left[\frac{u_n^{\prime}}{1+\frac{g(|\psi_n|)}{mc_n^2+\omega_n} }\right]^{\prime}\\
   & = -\frac{c_n}{mc_n^2+\omega_n} I,
\end{align*}
where $ I= \left[\frac{u_n^{\prime}}{1+\frac{g(|\psi_n|)}{mc_n^2+\omega_n} }\right]^{\prime}$. Then
\begin{equation}\label{4.22}
  -I=\frac{-i}{c_n} (mc_n^2+\omega_n)v_n^{\prime}.
\end{equation}
Multiplying \eqref{4.18} by $ \frac{mc_n^2+\omega_n}{c_n}$   , we have 
\begin{equation}\label{4.23}
\begin{aligned}
-I & =\frac{mc_n^2+\omega_n}{c_n^2}\left[g(|\psi_n|) u_n-(m c_n^2-\omega_n) u_n\right]  \\
     & =b_n g(|\psi_n|) u_n -(m c_n^2-\omega_n)b_n u_n   \\
     & =b_ng(|\psi_n|) u_n  -  a_n u_n .
\end{aligned}
\end{equation}
Multiplying \eqref{4.13} by $ \frac{-i}{c_n}$  and derivativing, we have
\begin{equation}\label{4.24}
\begin{aligned}
-\frac{i}{c_n} \left[g(|\psi_n|) v_n\right]^{\prime}
   & = \left[ -u_n^{\prime} +\frac{i(mc_n^2+\omega_n)}{c_n} v_n \right]^{\prime}  \\
   & = -u_n^{\prime\prime}+\frac{i}{c_n}(mc_n^2+\omega_n)v_n^{\prime}.
\end{aligned}
\end{equation}
Combining with  \eqref{4.22} and \eqref{4.23}, we have 
\begin{align*}
  -u_n^{\prime\prime} + \frac{i}{c_n}\left[g(|\psi_n|) v_n\right]^{\prime}
   & =-\frac{i}{c_n}(mc_n^2+\omega_n)v_n^{\prime}=-I\\
   &   =b_n g(|\psi_n|) u_n  -  a_n u_n .
\end{align*}
Thus, we get
\begin{equation}\label{4.25}
 -u_n^{\prime\prime}+a_n u_n=-\frac{i}{c_n} \left[g(|\psi_n|) v_n\right]^{\prime}+b_n g(|\psi_n|) u_n . 
\end{equation}
Multiplying \eqref{4.25} by $\bar{u}_n$ and integrating over $\mathcal{G}$, we have
\begin{equation}\label{4.26}
\begin{aligned}
\int_{\mathcal{G}}-u_n^{\prime\prime}\bar{u}_n+a_n u_n\bar{u}_n dx 
   & =  \int_{\mathcal{G}} |u_n^{\prime} |^2 d x + a_n \int_{\mathcal{G}}| u_n |^2  d x\\
   & =\int_{\mathcal{G}} -\frac{i}{c_n} [g(|\psi_n|) v_n]^{\prime}\bar{u}_n
           +b_n g(|\psi_n|) u_n\bar{u}_n  d x      \\
   & =-\frac{i}{c_n} \int_{\mathcal{G}} [g(|\psi_n|) v_n]^{\prime}\bar{u}_n d x
           +  b_n \int_{\mathcal{G}}g(|\psi_n|)|u_n|^2  d x  .
\end{aligned}
\end{equation}
We claim that
\begin{equation}\label{4.27}
 -\frac{i}{c_n} \int_{\mathcal{G}} \left[g(|\psi_n|) v_n\right]^{\prime} \bar{u}_n d x+b_n \int_{\mathcal{G}}g(|\psi_n|)|u_n|^2 d x
 =o\left(\left\|u_n\right\|_{H^1}^2\right) . 
\end{equation}
Accepting this fact for the moment, combining with \eqref{4.10}, we get
\begin{align*}
  o(\|u_n\|_{H^1}^2)
   & =\int_{\mathcal{G}}|u_n^{\prime}|^2 d x+a_n \int_{\mathcal{G}}|u_n|^2 d x \\
   & =\|u_n^{\prime}\|_{L^2}^2 + a_n \|u_n\|_{L^2}^2\\
   & \geq C \|u_n\|_{H^1}^2
\end{align*}
which is a contradiction. Therefore, the lemma is obtained.

Now, it remains to show \eqref{4.27} is valid.
Indeed, by ($g_3$), \eqref{4.17} and H\"older's inequality, one has
\begin{equation}\label{4.28}
\begin{aligned}
\int_{\mathcal{G}} g\left(|\psi_n|\right)|u_n|^2 d x 
   \leq& C\int_{\mathcal{G}}(|u_n|^2+|\psi_n|^{p-2}|u_n|^2)d x  \\
  \leq&  C\|u_n\| _{H^1}^2     +C\int_{\mathcal{G}}\left(|u_n|^2+|v_n|^2  \right)^{\frac{p-2}{2}} |u_n|^2 d x \\
  \leq & C\left(\int_{\mathcal{G}}(|u_n|^2+|v_n|^2)^{\frac{p}{2}} d x\right)^{\frac{p-2}{p}}
         \left(\int_{\mathcal{G}}|u_n|^p d x\right)^{\frac{2}{p}}
         +o\left(\|u_n\|_{H^1}^2\right)     \\
  \leq & C\left(\int_{\mathcal{G}}\left(\left|u_n\right|^p+\left|v_n\right|^p\right) d x\right)^{\frac{p-2}{p}}
        \left(\int_{\mathcal{G}}\left|u_n\right|^p d x\right)^{\frac{2}{p}}
        +o\left(\left\|u_n\right\|_{H^1}^2\right) \\
  \leq & C\left(\left\|u_n\right\|_{L^{p}}^{p-2}+\left\|v_n\right\|_{L^{p}}^{p-2}\right)
      \left\|u_n\right\|_{L^p}^2
      +o\left(\left\|u_n\right\|_{H^1}^2\right)\\
  \leq & C\left(\left\|u_n\right\|_{H^1}^{p-2}+\left\|v_n\right\|_{H^1}^{p-2}\right)\left\|u_n\right\|_{H^1}^2
      +o\left(\left\|u_n\right\|_{H^1}^2\right) \\
  \leq &  o\left(\left\|u_n\right\|_{H^1}^2\right). 
\end{aligned}
\end{equation}
In addition, we have
\begin{align*}
  & -\frac{i}{c_n}\int_{\mathcal{G}}\left[g(|\psi_n|) v_n\right]^{\prime} \bar{u}_n d x \\
  = &-\frac{i}{c_n}\int_{\mathcal{G}}g(|\psi_n|) v_n^{\prime} \bar{u}_n dx
      -\frac{i}{c_n}\int_{\mathcal{G}}g^{\prime}(|\psi_n|) v_n \bar{u}_n  dx\\
  = &I_1+I_2 .
\end{align*}
Combining with \eqref{4.18}, \eqref{4.19} and \eqref{4.28},  we obtain
\begin{align*}
  |I_1 | = &\left| -\frac{i}{c_n}\int_{\mathcal{G}}g(|\psi_n|) v_n^{\prime} \bar{u}_n dx\right| \\
  = &\frac{1}{c_n}\left|\int_{\mathcal{G}}\frac{1}{c_n}\left[ g(|\psi_n|)u_n-(mc_n^2 -\omega_n)u_n   \right]g(|\psi_n|) \bar{u}_n dx\right|\\
  = & \frac{1}{c_n^2} \left|g^2(|\psi_n|) |u_n|^2 -g(|\psi_n|)(mc_n^2 -\omega_n)|u_n|^2 \right|\\
  \leq &  \frac{C}{c_n^2} \|u_n\|_{H^1}^2 +o(\|u_n\|_{H^1}^2)\\
  \leq & o(\|u_n\|_{H^1}^2).
\end{align*}
Observe that for $ p \in (2,6)$,
$$
(|u_n|^2+|v_n|^2)^{\frac{p-4}{2}} \leq 2^{\frac{p-4}{2}}(|u_n||v_n|)^{\frac{p-4}{2}}.
$$
Therefore, combining with H\"older's  inequality, we have
\begin{align*}
  |I_2 |=&\left| -\frac{i}{c_n}\int_{\mathcal{G}}g^{\prime}(|\psi_n|) v_n \bar{u}_n  dx\right| \\
  \leq & \left| \frac{C}{c_n}\int_{\mathcal{G}}\left( 1+|\psi_n|^{p-2}  \right)^{\prime} v_n \bar{u}_n  dx\right|\\
  = & \left| \frac{C}{c_n}(p-2) \int_{\mathcal{G}}(|u_n|^2+|v_n|^2)^{\frac{p-4}{2}}(u_n u_n^{\prime}+v_n v_n^{\prime} ) v_n \bar{u}_n  dx       \right| \\
 \leq &\frac{C}{c_n} \left|\int_{\mathcal{G}} (|u_n||v_n|)^{\frac{p-4}{2}} (u_n u_n^{\prime}
      +v_n v_n^{\prime} ) v_n \bar{u}_n  dx       \right|  \\
 \leq & \frac{C}{c_n} \int_{\mathcal{G}} |u_n|^{\frac{p}{2}}|v_n|^{\frac{p-2}{2}}|u_n^{\prime}|
                       + |u_n|^{\frac{p-2}{2}}|v_n|^{\frac{p}{2}}|v_n^{\prime}|dx\\
  \leq & \frac{C}{c_n}\left( \int_{\mathcal{G}}|u_n|^{2(p-1)}dx \right)^{\frac{p}{4(p-1)}}
            \left( \int_{\mathcal{G}}|v_n|^{2(p-1)}dx \right)^{\frac{p-2}{4(p-1)}}
             \left( \int_{\mathcal{G}}|u_n^{\prime}|^2dx \right)^{\frac{1}{2}}  \\
           &  + \frac{C}{c_n}\left( \int_{\mathcal{G}}|v_n|^{2(p-1)}dx \right)^{\frac{p}{4(p-1)}}
            \left( \int_{\mathcal{G}}|u_n|^{2(p-1)})dx \right)^{\frac{p-2}{4(p-1)}}
             \left( \int_{\mathcal{G}}|v_n^{\prime}|^2dx \right)^{\frac{1}{2)}}\\
 \leq& \frac{C}{c_n^2} \|u_n\|_{H^1}^p.
\end{align*}
Together with \eqref{4.28}, implies that \eqref{4.27} is true. This completes the proof.
\end{proof}
\par 
We recall that a function $w: \mathcal{G} \rightarrow \mathbb{C}$ is a solution of the NLSE \eqref{1.2} if and only if it is a critical point of the $C^2$ energy functional $J: H^1\left(\mathcal{G}, \mathbb{C}^2\right) \rightarrow \mathbb{R}$ defined by
$$
J(w):=\frac{1}{2} \int_{\mathcal{G}}|w^{\prime}|^2 d x-\frac{\nu}{2} \int_{\mathcal{G}}|w|^2 d x-2m \int_{\mathcal{G}}G(|w|) d x .
$$
For the study of solutions to the NLSE \eqref{1.2}, we refer to \cite{MR786909}. Therefore, we have the following Lemma.
\begin{Lem}\label{Lem4.5}
Let $\left\{w_n\right\}$ be a bounded sequence in $H^1\left(\mathcal{G}\right)$. For every $n$, we define the linear functional $\mathcal{A}_n\left(w_n\right): H^1\left(\mathcal{G}\right) \rightarrow \mathbb{R}$
$$
\left\langle\mathcal{A}_n\left(w_n\right), \phi\right\rangle:=\int_{\mathcal{G}} w_n^{\prime} \bar{\phi}^{\prime} d x+a_n \int_{\mathcal{G}} w_n \bar{\phi} d x-b_n \int_{\mathcal{G}}g(|w_n|) w_n \bar{\phi} d x .
$$
Then, $\left\{w_n\right\}$ is a (PS)-sequence for $\Phi$ if and only if
\begin{equation}\label{4.29}
 \sup _{\|\phi\|_{H^1(\mathcal{G})} \leq 1}\left\langle\mathcal{A}_n\left(w_n\right), \phi\right\rangle \rightarrow 0 \text { as } n \rightarrow \infty.
\end{equation}
\end{Lem}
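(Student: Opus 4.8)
The plan is to realize $\mathcal{A}_n(w_n)$ as the Fr\'echet derivative of a scalar energy functional and then to compare that functional with the fixed NLSE energy $J$ recalled just above the statement; the Palais--Smale property ``for $\Phi$'' is, in this nonrelativistic regime, precisely the Palais--Smale property for that limiting functional $J$. For each $n$ I would introduce
$$
J_n(w):=\frac{1}{2}\int_{\mathcal{G}}|w'|^2\,dx+\frac{a_n}{2}\int_{\mathcal{G}}|w|^2\,dx-b_n\int_{\mathcal{G}}G(|w|)\,dx .
$$
Differentiating and using $\tfrac{d}{ds}G(s)=g(s)s$ gives $\langle J_n'(w_n),\phi\rangle=\langle\mathcal{A}_n(w_n),\phi\rangle$ for every $\phi\in H^1(\mathcal{G})$. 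Since $\sup_{\|\phi\|_{H^1}\le1}\langle\mathcal{A}_n(w_n),\phi\rangle=\|J_n'(w_n)\|_{(H^1)^*}$, condition \eqref{4.29} says exactly that $J_n'(w_n)\to0$ in $(H^1(\mathcal{G}))^*$.

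The decisive step is to pass from the moving derivatives $J_n'$ to the derivative $J'$ of the fixed functional $J$, so that the limit of the sequence is genuinely a critical point of the NLSE energy. By \eqref{4.9}--\eqref{4.10} one has $a_n\to-\nu$ and $b_n\to2m$, so for every $\phi$ with $\|\phi\|_{H^1}\le1$,
$$
\bigl|\langle J'(w_n)-J_n'(w_n),\phi\rangle\bigr|\le|a_n+\nu|\,\|w_n\|_{L^2}\|\phi\|_{L^2}+|b_n-2m|\int_{\mathcal{G}}g(|w_n|)|w_n|\,|\phi|\,dx .
$$
The uniform $H^1$-bound on $\{w_n\}$, the growth hypothesis $(g_3)$, and the Gagliardo--Nirenberg embedding of Lemma \ref{Lem2.5} (which controls $\|w_n\|_{L^p}$ by $\|w_n\|_{H^1}$ with an $n$-independent constant, all exponents being subcritical on the graph) render both terms $o(1)$ uniformly in $\phi$. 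Hence $\|J'(w_n)-J_n'(w_n)\|_{(H^1)^*}\to0$, and therefore \eqref{4.29} is equivalent to $J'(w_n)\to0$ in $(H^1(\mathcal{G}))^*$.

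It remains to supply the energy half of the Palais--Smale condition, which is automatic under the hypotheses. Boundedness of $\{w_n\}$ in $H^1$ bounds the quadratic part of $J(w_n)$, while $(g_3)$ gives $G(s)\le C(s^2+s^p)$ and Lemma \ref{Lem2.5} then bounds $\int_{\mathcal{G}}G(|w_n|)$ through $\|w_n\|_{L^2}^2+\|w_n\|_{L^p}^p$; thus $\{J(w_n)\}$ is bounded. Combining the three steps yields the asserted equivalence: a bounded sequence $\{w_n\}$ is a Palais--Smale sequence for the NLSE energy $J$ if and only if \eqref{4.29} holds. The one genuinely delicate point is the uniform-in-$\phi$ estimate of the nonlinear contribution to $J'(w_n)-J_n'(w_n)$: it is subcriticality of $g$ on the noncompact graph, together with the $H^1$-bound, that forces this difference to zero, and one must verify that the Gagliardo--Nirenberg constant is independent of $n$ so that the smallness is genuinely uniform.
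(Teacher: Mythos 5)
Your proposal is correct and follows essentially the same route as the paper: both compare $\mathcal{A}_n(w_n)$ with $dJ(w_n)$, observe that the difference is $(a_n+\nu)\int_{\mathcal{G}} w_n\bar{\phi}\,dx-(b_n-2m)\int_{\mathcal{G}}g(|w_n|)w_n\bar{\phi}\,dx$, and use $a_n\to-\nu$, $b_n\to 2m$ together with the $H^1$-bound to make this vanish in the dual norm, plus boundedness of $J(w_n)$ for the energy half. Your version merely adds the cosmetic device of the auxiliary functionals $J_n$ and spells out the uniform-in-$\phi$ estimate of the nonlinear term (via $(g_3)$, H\"older, and Gagliardo--Nirenberg) that the paper states tersely.
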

\begin{proof}
Let $\left\{w_n\right\}$ be a (PS)-sequence for $J$, that is $d J\left(w_n\right) \rightarrow 0$ in $H^{*}(\mathcal{G})$. Here, $H^{*}(\mathcal{G})$ is the dual space of $H^1(\mathcal{G})$. In addition, we notice that
$$
\left\langle\mathcal{A}_n\left(w_n\right)-d J\left(w_n\right), \phi\right\rangle
=\left(a_n+ \nu\right) \int_{\mathcal{G}} w_n \bar{\phi} d x-
     \left(b_n-2 m\right) \int_{\mathcal{G}}g(|w_n|) w_n \bar{\phi} d x.
$$
By \eqref{4.9} and \eqref{4.10}, we have
$$ \langle\mathcal{A}_n(w_n)-dJ(w_n),\phi\rangle \rightarrow 0.
$$
By the assumption that $\left\{w_n\right\}$ is bounded in $H^1\left(\mathcal{G}\right)$, we can get
$J(w_n)$ is bounded,  i.e.$J(w_n)\rightarrow C $. Thus, $\left\{w_n\right\}$ is a (PS)-sequence for $J$ if and only if
\begin{equation*}
  \sup _{|\phi|_{H^1} \leq 1}\left\langle\mathcal{A}_n\left(w_n\right), \phi\right\rangle \rightarrow 0 \text { as } n \rightarrow \infty  .
\end{equation*}
\end{proof}

In what follows, we will prove that $\left\{u_n\right\}$ is a (PS)-sequence for $J$.
\begin{Lem}\label{Lem4.6}
The sequence $\left\{u_n\right\}$ is a (PS)-sequence for $J$.
\end{Lem}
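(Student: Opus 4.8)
The plan is to verify the criterion of Lemma \ref{Lem4.5}: since $\{u_n\}$ is bounded in $H^1(\mathcal{G})$ by Lemma \ref{Lem4.2}, it suffices to show
\[
\sup_{\|\phi\|_{H^1(\mathcal{G})}\le 1}\langle\mathcal{A}_n(u_n),\phi\rangle \longrightarrow 0 \quad\text{as } n\to\infty .
\]
To this end I would test the scalar equation \eqref{4.25} against an arbitrary $\phi\in H^1(\mathcal{G})$ with $\|\phi\|_{H^1}\le 1$, integrating by parts in the term $-u_n''$ (the vertex contributions cancel because $u_n$ is continuous and $v_n$ obeys the Kirchhoff condition \eqref{2.6}, while the half-line contributions vanish by the $H^1$ decay at infinity). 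This yields
\[
\int_{\mathcal{G}}u_n'\bar\phi'\,dx + a_n\int_{\mathcal{G}}u_n\bar\phi\,dx = -\frac{i}{c_n}\int_{\mathcal{G}}[g(|\psi_n|)v_n]'\bar\phi\,dx + b_n\int_{\mathcal{G}}g(|\psi_n|)u_n\bar\phi\,dx .
\]
Subtracting $b_n\int_{\mathcal{G}}g(|u_n|)u_n\bar\phi\,dx$ and recalling the definition of $\mathcal{A}_n$ in Lemma \ref{Lem4.5}, I get the decomposition $\langle\mathcal{A}_n(u_n),\phi\rangle = A_n(\phi)+B_n(\phi)$ with
\[
A_n(\phi):=-\frac{i}{c_n}\int_{\mathcal{G}}[g(|\psi_n|)v_n]'\bar\phi\,dx,\qquad B_n(\phi):=b_n\int_{\mathcal{G}}\big(g(|\psi_n|)-g(|u_n|)\big)u_n\bar\phi\,dx ,
\]
so the task reduces to proving that $\sup_{\|\phi\|_{H^1}\le1}|A_n(\phi)|$ and $\sup_{\|\phi\|_{H^1}\le1}|B_n(\phi)|$ both vanish in the limit.

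For $A_n$ I would integrate by parts once more, moving the derivative onto $\bar\phi$ (again the vertex terms cancel by the Kirchhoff condition on $v_n$ and the continuity of $g(|\psi_n|)$ and $\phi$), so that $A_n(\phi)=\frac{i}{c_n}\int_{\mathcal{G}}g(|\psi_n|)v_n\bar\phi'\,dx$. Using the growth bound $(g_3)$, $g(|\psi_n|)\le C(1+|\psi_n|^{p-2})$, Hölder's inequality, the uniform $L^q$ bounds of Lemma \ref{Lem4.1}, and $\|\phi'\|_{L^2}\le\|\phi\|_{H^1}\le1$, this is controlled by a constant multiple of $\tfrac{1}{c_n}\big(\|v_n\|_{L^2}+\|v_n\|_{L^{s}}\big)$ for a suitable finite $s$. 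Since $\|v_n\|_{L^q}=\mathcal{O}(1/c_n)$ for every $q\in[2,\infty)$ by Lemma \ref{Lem4.3} and the Gagliardo--Nirenberg inequality, $\sup_{\|\phi\|\le1}|A_n(\phi)|=\mathcal{O}(1/c_n^{2})\to0$; the prefactor $1/c_n$ alone already forces this, the decay of $v_n$ merely improving the rate.

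The main work, and the main obstacle, is the term $B_n$: its prefactor $b_n\to 2m$ carries no smallness, so everything hinges on the difference $g(|\psi_n|)-g(|u_n|)$. I would exploit that $|\psi_n|^2=|u_n|^2+|v_n|^2$, whence $0\le|\psi_n|-|u_n|\le|v_n|$, together with the uniform bound $\|\psi_n\|_{L^\infty}\le C\|\psi_n\|_{H^1}\le C$ obtained from Lemma \ref{Lem4.2} and the embedding $H^1(\mathcal{G})\hookrightarrow L^\infty(\mathcal{G})$. Writing $g(|\psi_n|)-g(|u_n|)=\int_{|u_n|}^{|\psi_n|}g'(t)\,dt$ and using the bound on $g'$ inherited from $(g_3)$ (exactly as in the estimate of $I_2$ in the proof of Lemma \ref{Lem4.4}), the difference is dominated by a power of $|v_n|$ times uniformly bounded factors. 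Because $H^1(\mathcal{G})\hookrightarrow L^\infty(\mathcal{G})$ dualizes to $L^1(\mathcal{G})\hookrightarrow (H^1(\mathcal{G}))^{\ast}$, one obtains $\sup_{\|\phi\|\le1}|B_n(\phi)|\le C b_n\,\|(g(|\psi_n|)-g(|u_n|))u_n\|_{L^1}$, and the right-hand side tends to $0$ after a Hölder splitting, since $\|v_n\|_{L^q}\to0$ for all $q\in[2,\infty)$ while $\|u_n\|_{L^q}$ stays bounded. Combining the two estimates gives $\sup_{\|\phi\|_{H^1}\le1}\langle\mathcal{A}_n(u_n),\phi\rangle\to0$, and Lemma \ref{Lem4.5} then identifies $\{u_n\}$ as a $(PS)$-sequence for $J$. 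The delicate point to get right is the uniform control of $g(|\psi_n|)-g(|u_n|)$ in the range $2<p<3$, where $g'$ is unbounded near the origin and one must invoke the sub-additivity $(a^2+b^2)^{(p-2)/2}\le a^{p-2}+b^{p-2}$ in place of a naive Lipschitz estimate.
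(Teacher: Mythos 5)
Your proposal is correct and follows essentially the same route as the paper: both verify the criterion of Lemma~\ref{Lem4.5} by testing \eqref{4.25} against $\phi$ with $\|\phi\|_{H^1}\le 1$, splitting the right-hand side into the $\tfrac{1}{c_n}[g(|\psi_n|)v_n]'$ term (handled by integration by parts, $(g_3)$, H\"older's inequality and Lemmas~\ref{Lem4.2}--\ref{Lem4.3}, giving a bound of order $1/c_n$) and the $b_n g(|\psi_n|)u_n$ term. The only difference is in the second term, where the paper passes from $g(|\psi_n|)$ to $g(|u_n|)$ qualitatively via $v_n\to 0$ a.e.\ and continuity of $g$, whereas you give a quantitative estimate using $\bigl||\psi_n|-|u_n|\bigr|\le|v_n|$ together with the $L^\infty$ embedding; this is a minor variation that in fact makes the uniformity over the unit ball of $H^1$ more transparent.
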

\begin{proof}
From Lemma \ref{Lem4.5}, it is sufficient to prove \eqref{4.29}. Let us take an arbitrary $\phi \in H^1\left(\mathcal{G}\right)$ with $\|\phi\|_{H^1\left(\mathcal{G}\right)} \leq 1$. Multiplying \eqref{4.25} by $\phi$ and integrating over $\mathcal{G}$, one has
\begin{equation}\label{4.30}
\begin{aligned}
 -\int_{\mathcal{G}}  u_n^{\prime\prime} \bar{\phi} d x+a_n \int_{\mathcal{G}} u_n \bar{\phi} d x
  = & -\frac{i}{c_n} \int_{\mathcal{G}} \left[g(|\psi_n|) v_n\right]^{\prime} \bar{\phi} d x  
       +b_n \int_{\mathcal{G}}g(|\psi_n|) u_n \bar{\phi} d x\\
  = :&  J_1+J_2
\end{aligned}
\end{equation}
By H\"older's inequality, Lemmas \ref{Lem4.2} and \ref{Lem4.3}, we get
\begin{equation}\label{4.31}
\begin{aligned}
\left|J_1\right|
  = & \left|\frac{1}{c_n} \int_{\mathcal{G}}g(|\psi_n|) v_n \bar{\phi}^{\prime}  d x\right| \\
  \leq &  \frac{1}{c_n} \int_{\mathcal{G}}|g(|\psi_n|)|\left|v_n\right||\bar{\phi}^{\prime} | d x \\
  \leq & \frac{1}{c_n}\left(\int_{\mathcal{G}}|g(|\psi_n|)|^2|v_n|^2 d x\right)^{\frac{1}{2}}\left(\int_{\mathcal{G}}| \bar{\phi}^{\prime} |^2 d x\right)^{\frac{1}{2}}  \\
   \leq &\frac{C}{c_n} \|\bar{\phi}^{\prime} \| _{L^2}  \left(\int_{\mathcal{G}} |v_n|^2+|\psi_n|^{2(p-2)}|v_n|^2 dx \right)  ^{\frac{1}{2}}                                   \\
   \leq &  \frac{C}{c_n} \|\bar{\phi}^{\prime} \| _{H^1} \left[ \|v_n\|_{L^2}^2+
               \left(\int_{\mathcal{G}}|\psi_n|^{2p}dx \right)^{\frac{p-2}{p}}
               \left(\int_{\mathcal{G}}|v_n|^{p}dx \right)^{\frac{2}{p}}
               \right] ^{\frac{1}{2}}         \\
   \leq &  \frac{C}{c_n}  \left( \|v_n\|_{H^1}^2 +\|\psi_n\|_{L^{2p}}^{2(p-2)}\|v_n\|_{L^p}^2\right) ^{\frac{1}{2}}        \\
   \leq &  \frac{C}{c_n}  \left( \|v_n\|_{H^1}^2 +\|\psi_n\|_{L^{2p}}^{2(p-2)}\|v_n\|_{H^1}^2\right) ^{\frac{1}{2}}  \rightarrow 0       .
\end{aligned}
\end{equation}
Since $\left\|v_n\right\|_{H^1} \rightarrow 0$, then $v_n \rightarrow 0$ a.e. on $\mathcal{G}$. Thus, by the continuity of $g$, one has
$$
\int_{\mathcal{G}}\left(g\left(\left|\psi_n\right|\right)-g\left(\left|u_n\right|\right)\right) u_n \bar{\phi} d x \rightarrow 0 .
$$
Therefore, 
\begin{equation}\label{4.32}
J_2 =b_n\int_{\mathcal{G}}g(|\psi_n|)u_n \bar{\phi} d x=b_n\int_{\mathcal{G}}g(|u_n|)u_n \bar{\phi} d x  +o(1).
\end{equation}
Hence, combining with \eqref{4.30} and \eqref{4.31}, we obtain
\begin{align*}
    -\int_{\mathcal{G}}  u_n^{\prime\prime} \bar{\phi} d x+a_n \int_{\mathcal{G}} u_n \bar{\phi} d x 
  = & \int_{\mathcal{G}}  u_n^{\prime} \bar{\phi} ^{\prime}d x + a_n \int_{\mathcal{G}} u_n \bar{\phi} d x \\
  = & o(1)+b_n \int_{\mathcal{G}}g(|u_n|) u_n \bar{\phi} d x+o(1)\\
  = &  b_n \int_{\mathcal{G}}g(|u_n|)u_n \bar{\phi} d x+o(1) ,
\end{align*}
that is
$$
\sup _{\|\phi\|_{H^1} \leq 1}\left(\mathcal{A}_n\left(u_n\right), \phi\right\rangle \rightarrow 0 \text { as } n \rightarrow \infty.
$$
\end{proof}
\\
\textbf{Proof of the first part of Theorem \ref{Thm1.2}}:
\par 
Define the linear functional $\mathcal{B}(u): H^1\left(\mathcal{G}\right) \rightarrow \mathbb{R}$
$$
\mathcal{B}(u) \varphi:=\int_{\mathcal{G}}  u^{\prime} \bar{\varphi}^{\prime} d x-\nu \int_{\mathcal{G}} u \bar{\varphi} d x.
$$
By Lemma \ref{Lem4.2} the sequence $\left\{u_n\right\}$ is bounded in $H^1\left(\mathcal{G}\right)$, which together with Lemma \ref{Lem4.6} implies that it is a (PS)-sequence for $J$. Thus, up to subsequences, there is a $u \in H^1\left(\mathcal{G}\right)$ such that $u_n \rightharpoonup u$ in $H^1\left(\mathcal{G}\right)$ and $u_n \rightarrow u$ in $L^p\left(\mathcal{K}, \mathbb{C}^2\right)$. Therefore, using \eqref{4.9}, \eqref{4.10}, $(g_3)$ and the $H^1$-boundedness of $\left\{u_n\right\}$, we get
$$
\begin{aligned}
o(1)= & \left\langle\mathcal{A}_n\left(u_n\right)-\mathcal{B}\left(u\right), u_n-u\right\rangle \\
= & \int_{\mathcal{G}}\left| u_n^{\prime}- u^{\prime}\right|^2 d x
+a_n \int_{\mathcal{G}} u_n\left(\bar{u}_n-\bar{u}\right) d x
-b_n \int_{\mathcal{G}}g(|u_n|) u_n\left(\bar{u}_n-\bar{u}\right) d x 
       +\nu \int_{\mathcal{G}} u\left(\bar{u}_n-\bar{u}\right) d x \\
= & \int_{\mathcal{G}}\left| u_n^{\prime}- u^{\prime}\right|^2 d x-\nu \int_{\mathcal{G}} |u_n-u|^2 d x
  -2 m \int_{\mathcal{G}}g(|u_n|) u_n\left(\bar{u}_n-\bar{u}\right) d x \\
\geq &  \|( u_n- u)^{\prime}\|_{L^2}^2 -\nu\|u_n-u\|_{L^2}^2 
        -Cm \left[ \int_{\mathcal{G}}  u_n\left(\bar{u}_n-\bar{u}\right) d x
        +    \int_{\mathcal{G}}  u_n^{p-1}\left(\bar{u}_n-\bar{u}\right) d x     \right]            \\
\geq &  C_1  \| u_n- u\|_{H^1}^2 -C_2 m \left[           
         \left( \int_{\mathcal{G}}|u_n|^2 dx \right)^{\frac{1}{2}} 
          \left( \int_{\mathcal{G}}|{u}_n-{u}|^2 dx \right)^{\frac{1}{2}}
          + \left( \int_{\mathcal{G}}|u_n|^p dx \right)^{\frac{p-1}{p}} 
          \left( \int_{\mathcal{G}}|{u}_n-{u}|^p dx \right)^{\frac{1}{p}}
                    \right]           \\
\geq &  C_1  \| u_n- u\|_{H^1}^2  -C_3 m \left[ \|u_n-u\|_{L^2}+ \|u_n-u\|_{L^p}          \right]          \\
\geq &   C_1  \| u_n- u\|_{H^1}^2  -C_4 m \| u_n- u\|_{H^1}^2         
\end{aligned}
$$
Here we use the fact $\nu<0$ and $p \in(2, 6)$. Hence, there exists a constant $m_0>0$ such that $u_n \rightarrow u$ in $H^1\left(\mathcal{G}\right)$ for all $m \leq m_0$. 

\subsection{Proof of the second part of Theorem 1.2}
In this section, we estimate the uniform boundedness and the exponential decay properties of solutions. Let $n \rightarrow \infty, \psi_n$ be a solution of frequency $\omega_n$ of the NLDE \eqref{1.1} at speed of light $c_n$. To begin with, we estimate the uniform boundedness of the sequence $\left\{\psi_n\right\}$ in $L^{\infty}\left(\mathcal{G}, \mathbb{C}^2\right)$.
\begin{Lem}\label{Lem4.7}
 The sequence $\left\{\psi_n\right\}$ is bounded in $L^{\infty}\left(\mathcal{G}, \mathbb{C}^2\right)$ uniformly with respect to $n$.
\end{Lem}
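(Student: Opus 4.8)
The plan is to obtain the uniform $L^\infty$ bound as an immediate consequence of the uniform $H^1$ bound already secured in Lemma \ref{Lem4.2}, using the Gagliardo--Nirenberg inequality of Lemma \ref{Lem2.5} at the endpoint exponent $q=\infty$. Since for spinors the relevant norm is defined componentwise, $\|\psi\|_{L^\infty(\mathcal{G},\mathbb{C}^2)}=\max\{\|u\|_{L^\infty(\mathcal{G})},\|v\|_{L^\infty(\mathcal{G})}\}$, it suffices to control $u_n$ and $v_n$ separately and then take the maximum.

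First I would apply \eqref{2.22} with $q=\infty$ to each scalar component. Setting $1/q=0$ in the exponents $\tfrac12+\tfrac1q$ and $\tfrac12-\tfrac1q$ yields
$$
\|u_n\|_{L^\infty(\mathcal{G})}\le C_\infty\|u_n\|_{L^2}^{1/2}\|u_n'\|_{L^2}^{1/2}\le C_\infty\|u_n\|_{H^1(\mathcal{G})},
$$
and likewise $\|v_n\|_{L^\infty(\mathcal{G})}\le C_\infty\|v_n\|_{H^1(\mathcal{G})}$, where $C_\infty$ is the constant furnished by Lemma \ref{Lem2.5}; crucially, it depends only on the graph $\mathcal{G}$ and not on the parameters $c_n,\omega_n$. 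Next I would invoke Lemma \ref{Lem4.2}, which under $(g_1)$--$(g_5)$ with $2<p<6$ gives $\|u_n\|_{H^1}+\|v_n\|_{H^1}\le C$ with $C$ independent of $n$. Combining the two estimates produces
$$
\|\psi_n\|_{L^\infty(\mathcal{G},\mathbb{C}^2)}=\max\{\|u_n\|_{L^\infty(\mathcal{G})},\|v_n\|_{L^\infty(\mathcal{G})}\}\le C_\infty C
$$
uniformly in $n$, which is precisely the assertion.

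The only delicate point, and hence the step I would treat most carefully, is the uniform validity of the interpolation constant at the endpoint $q=\infty$ on a \emph{noncompact} graph: on a single bounded interval such a bound is elementary, but the presence of half-lines and vertices means one must rely on a global inequality. This is exactly what Lemma \ref{Lem2.5} supplies, provided one checks that $\mathcal{G}$ satisfies its hypotheses (connected, noncompact, finitely many edges), which holds throughout the paper. Should one wish to avoid the endpoint case, a safe fallback is to argue edge by edge: on each interval $I_e$ the pointwise bound $|f(x)|^2\le \|f\|_{L^2(I_e)}^2+2\|f\|_{L^2(I_e)}\|f'\|_{L^2(I_e)}$ gives $\|f\|_{L^\infty(I_e)}\le C\|f\|_{H^1(I_e)}$, and taking the maximum over the finitely many edges recovers the same conclusion with a uniform constant; this is effectively what Lemma \ref{Lem2.5} packages, so no further work is genuinely needed.
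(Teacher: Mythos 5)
Your proof is correct, but it takes a genuinely more direct route than the paper. The paper's official argument is a regularity bootstrap: it rewrites the equation as $\mathcal{D}_a\psi_n = \text{(right-hand side)}$ for the shifted operator $\mathcal{D}_a=-i\frac{d}{dx}\sigma_1+a\sigma_3$ with $0\notin\sigma(\mathcal{D}_a)$, estimates the right-hand side in $L^r$ for every $r\in[2,\infty)$ using the $L^q$ bounds of \eqref{4.34}, inverts $\mathcal{D}_a$ to get uniform $W^{1,r}$ bounds, and only then applies the Sobolev embedding $W^{1,r}\hookrightarrow L^\infty$. You instead go straight from the uniform $H^1$ bound of Lemma \ref{Lem4.2} to $L^\infty$ via the one-dimensional embedding $H^1(\mathcal{G})\hookrightarrow L^\infty(\mathcal{G})$ (Lemma \ref{Lem2.5} at $q=\infty$, or the elementary edge-by-edge estimate), which is sufficient because the graph is one-dimensional. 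In fact the paper itself implicitly concedes your point: its display \eqref{4.34} already asserts $\|\psi_n\|_{L^q}\le C_q$ for all $q\in[2,\infty]$ \emph{including} $q=\infty$ as a consequence of Lemmas \ref{Lem4.1}--\ref{Lem4.2} and Gagliardo--Nirenberg, which is precisely the statement to be proved, so the subsequent bootstrap is logically redundant for this lemma. What the longer route buys is the stronger conclusion $\|\psi_n\|_{W^{1,r}}\le C_r$ uniformly in $n$ for all $r$, i.e.\ control of $\psi_n'$ as well; your argument does not yield that, but the lemma does not claim it. One small caveat in your fallback: the pointwise bound $|f(x)|^2\le\|f\|_{L^2(I_e)}^2+2\|f\|_{L^2(I_e)}\|f'\|_{L^2(I_e)}$ as written needs $\ell_e\ge 1$ (one averages the starting point over a unit-length subinterval); for shorter edges the first term carries a factor $\ell_e^{-1}$, which is harmless since there are finitely many edges. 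This does not affect the validity of your proof.
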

\begin{proof}
By the Sobolev embedding theorem, we only need to prove that there exist $C_r>0$ independent of $n$ such that $\left\|\psi_n\right\|_{W^{1, r}} \leq C_r$ for any $r \geq 2$. Let $\mathcal{D}_a:=-i\frac{d}{dx}\sigma_1 + a\sigma_3$ with $a>0$. 
Then, \eqref{2.2} is equivalent to
\begin{equation}\label{4.33}
\begin{aligned}
 \mathcal{D} _a\psi_n
   = &(-i\frac{d}{dx}\sigma_1 + a\sigma_3)\psi_n 
  = -mc_n \sigma_3\psi_n +a\sigma_3\psi_n+\frac{\omega_n}{c_n} \psi_n+\frac{1}{c_n}  g(|\psi_n|) \psi_n \\
  = & -(m c_n-a)\sigma_3\psi_n+\frac{\omega_n}{c_n} \psi_n+\frac{1}{c_n} g(|\psi_n|) \psi_n .
\end{aligned}
\end{equation}
It is clear that $0 \notin \sigma\left(\mathcal{D} _a\right)$. 
Here, $\left|\left(u_n, v_n\right)\right|:=\left(\left|u_n\right|^2+\left|v_n\right|^2\right)^{\frac{1}{2}}$.

We have shown that $\left\|\psi_n\right\|_{L^2} \leq C$ and $\left\|\psi_n\right\|_{H^1} \leq C$ in Lemma \ref{Lem4.1} and Lemma \ref{Lem4.2}. Thus, using the Sobolev embedding theorem, we obtain
\begin{equation}\label{4.34}
\begin{aligned}
\|\psi_n\|_{L^q} 
  \leq & C_q\|\psi_n\|_{L^2}^{\frac{1}{2}+\frac{1}{q}}\|\psi_n\|_{H^1}^{\frac{1}{2}+\frac{1}{q}}   \\
    \leq& C_q ,\quad \forall q \in[2,\infty].
\end{aligned}
\end{equation}
For any $r \in[2, \infty)$,
\begin{align*}
  \||(u_n,v_n)|^{p-2}u_n\|_{L^r}^r 
  =   & \int_{\mathcal{G}}\left[(|u_n|^2+|v_n|^2)^{\frac{p-2}{2}}u_n \right]^r dx \\
 \leq & C\int_{\mathcal{G}} \left[\left(|u_n|^{p-2}+|v_n|^{p-2}  \right)u_n  \right]^r dx  \\
  \leq & C_1\int_{\mathcal{G}}|u_n|^{r(p-1)}+|v_n|^{r(p-2)}|u_n|^{r} dx\\
  \leq &  C_1\int_{\mathcal{G}}|u_n|^{r(p-1)} dx +C_2\left(\int_{\mathcal{G}}|u_n|^{r(p-1)}dx \right)^{\frac{1}{p-1}}
            \left(\int_{\mathcal{G}}|v_n|^{r(p-1)}dx \right)^{\frac{p-2}{p-1}}\\
  \leq &  C_1\int_{\mathcal{G}}|u_n|^{r(p-1)} dx  + C_2\|u_n\|_{L^{r(p-1)}}^{r}\|v_n\|_{L^{r(p-1)}}^{r(p-2)} \\
  \leq &  C_1\int_{\mathcal{G}}|u_n|^{r(p-1)} dx  +o(1)
\end{align*}
Similarly,
$$\||(u_n,v_n)|^{p-2}v_n\|_{L^r}^r \leq C\int_{\mathcal{G}}|v_n|^{r(p-1)} dx +o(1).
$$
Hence , one has
\begin{equation}\label{4.35}
\begin{aligned}
  &\left\|\frac{1}{c_n}g(|\psi_n|)(u_n ,v_n)  \right\|_{L^r}^r\\
   \leq & \left\|\frac{C}{c_n}(u_n ,v_n)  \right\|_{L^r}^r 
         +\left\|\frac{C}{c_n}\left|\left(u_n, v_n\right)\right|^{p-2} u_n\right\|_{L^r}^r
           +\left\|\frac{C}{c_n}\left|\left(u_n, v_n\right)\right|^{p-2} v_n\right\|_{L^r}^r \\
  \leq & \frac{C_1}{c_n^r}  \int_{\mathcal{G}}(|u_n|^2+|v_n|^2)^{\frac{r}{2}}dx   
            +\frac{C_2}{c_n^r} \int_{\mathcal{G}}\left|u_n\right|^{r(p-1)} d x
            +\frac{C_3}{c_n^r} \int_{\mathcal{G}}\left|v_n\right|^{r(p-1)} d x\\
  \leq & \frac{C_4}{c_n^r} \int_{\mathcal{G}} \left|u_n\right|^{r} d x  
          + \frac{C_4}{c_n^r} \int_{\mathcal{G}} \left|v_n\right|^{r} d x
         + \frac{C_2}{c_n^r} \int_{\mathcal{G}}\left|u_n\right|^{r(p-1)} d x
         +\frac{C_3}{c_n^r} \int_{\mathcal{G}}\left|v_n\right|^{r(p-1)} d x\\
         \leq & C.
\end{aligned}
\end{equation}
In addition, from Lemma \ref{Lem4.3} and \eqref{4.34}, we can infer that for any $r \in[2,\infty)$
\begin{equation}\label{4.36}
\begin{aligned}
&\left\|\left(\frac{\omega_n}{c_n}-\left(m c_n-a\right)\sigma_3 \right)\left(u_n, v_n\right)\right\|_{L^r}^r \\
& =\left\|\left(\frac{\omega_n-m c_n^2 \sigma_3}{c_n}+a \sigma_3\right) u_n\right\|_{L^r}^r+\left\|\left(\frac{\omega_n}{c_n^2}-\left(m-\frac{a}{c_n}\right)\sigma_3\right) c_n v_n\right\|_{L^r}^r \\
& \leq C_1(r)\left\|u_n\right\|_{L^r}^r+C_2(r) c_n^r\left\|v_n\right\|_{L^r}^r \\
& \leq C,
\end{aligned}
\end{equation}
with $C>0$ independent of $n$. Combining \eqref{4.33}, \eqref{4.35} and \eqref{4.36}, there is a $C>0$ such that 
$$
\left\|\psi_n\right\|_{W^{1, r}} \leq C, \quad \text { for any } r \in[2,\infty).
$$
\end{proof}

\begin{Lem}\label{Lem4.8}
 $\left|\psi_n(x)\right| \rightarrow 0$ as $|x| \rightarrow \infty$ uniformly in $n$.
\end{Lem}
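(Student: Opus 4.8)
The plan is to read off the uniform decay from the $H^1$-convergence of the full spinor sequence established in the previous subsection. By Lemma \ref{Lem4.3} we have $v_n \to 0$ in $H^1(\mathcal{G})$, and by the first part of Theorem \ref{Thm1.2} we have $u_n \to u$ in $H^1(\mathcal{G})$; hence, setting $\psi := (u,0)^T$, we obtain $\psi_n \to \psi$ in $H^1(\mathcal{G}, \mathbb{C}^2)$. Since $\mathcal{G}$ has finitely many edges, the embedding $H^1(I_e) \hookrightarrow L^\infty(I_e)$ on each edge (with constant $1$ on the half-lines, where $|f(x)|^2 = -\int_x^\infty \tfrac{d}{ds}|f(s)|^2\,ds \le \|f\|_{H^1(I_e)}^2$) yields $H^1(\mathcal{G}, \mathbb{C}^2) \hookrightarrow L^\infty(\mathcal{G}, \mathbb{C}^2)$, so that
\[
\|\psi_n - \psi\|_{L^\infty(\mathcal{G}, \mathbb{C}^2)} \le C\,\|\psi_n - \psi\|_{H^1(\mathcal{G}, \mathbb{C}^2)} \to 0.
\]

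First I would record that the limit decays: since $\psi \in H^1(\mathcal{G}, \mathbb{C}^2)$, its restriction to each half-line lies in $H^1([0,\infty))$ and is therefore continuous with $|\psi(x)| \to 0$ as $x \to \infty$; as the compact core is bounded and the number of half-lines is finite, $|\psi(x)| \to 0$ as $|x| \to \infty$. Fix $\varepsilon > 0$ and choose $R_0 > 0$ with $|\psi(x)| < \varepsilon/2$ whenever $|x| > R_0$. Next, using the $L^\infty$-convergence above, pick $N \in \mathbb{N}$ such that $\|\psi_n - \psi\|_{L^\infty} < \varepsilon/2$ for all $n \ge N$; then
\[
|\psi_n(x)| \le |\psi_n(x) - \psi(x)| + |\psi(x)| < \varepsilon, \qquad \forall\, |x| > R_0,\ \forall\, n \ge N.
\]
For each of the finitely many indices $n < N$, the solution $\psi_n$ itself belongs to $H^1(\mathcal{G}, \mathbb{C}^2)$ by Lemma \ref{Lem4.2}, hence $|\psi_n(x)| \to 0$ as $|x| \to \infty$, and I may select $R_n > 0$ with $|\psi_n(x)| < \varepsilon$ for $|x| > R_n$. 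Taking $R := \max\{R_0, R_1, \dots, R_{N-1}\}$ gives $|\psi_n(x)| < \varepsilon$ for every $n$ and every $|x| > R$, which is exactly the claimed uniform decay.

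The only genuine subtlety — and the step I would treat most carefully — is the passage from convergence to uniformity in $n$: the $H^1$-convergence controls the solutions for large $n$ only through the fixed, decaying limit $\psi$, so a single decay radius valid for all $n$ at once is not immediate. This is resolved by the dichotomy above, separating the tail behaviour of the common limit (which governs all large $n$ simultaneously via the uniform $L^\infty$ bound) from the finitely many remaining indices, each of which decays on its own. I note that the uniform $W^{1,r}$ bounds of Lemma \ref{Lem4.7} furnish an independent, more quantitative route through uniform Hölder continuity together with the uniform $L^2$ tail control; however, the argument via $H^1$-convergence is the most economical and suffices here.
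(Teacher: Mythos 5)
Your proof is correct and follows essentially the same route as the paper: both arguments pass from the strong $H^1$-convergence $\psi_n\to\psi$ (assembled from Lemma \ref{Lem4.3} and the first part of Theorem \ref{Thm1.2}) through the one-dimensional Sobolev embedding to a uniform $L^\infty$ estimate on $\psi_n-\psi$, then use the decay of the fixed limit $\psi$ for all large $n$ and handle the finitely many remaining indices individually. The only cosmetic difference is that you invoke the embedding $H^1(\mathcal{G},\mathbb{C}^2)\hookrightarrow L^\infty(\mathcal{G},\mathbb{C}^2)$ directly, whereas the paper phrases the same step via an $L^2$ tail bound plus the Sobolev constant; the dichotomy large-$n$/small-$n$ that you flag as the key subtlety is exactly the paper's resolution as well.
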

\begin{proof}
Since $\mathcal{G}$ is a quantum graph, each edge of $\mathcal{G}$ with $|x| \rightarrow \infty$ is either isometric to $\mathbb{R}$ or a half-line $[0,\infty)$. Consider an arbitrary edge $e\in E$. Given $\psi_n|_e\in H^1(I_e,\mathbb{C}^2)$, by the classical Sobolev embedding theorem for dimension one, we know:
$$
H^1(I_e)\hookrightarrow C_0(I_e),
$$
which implies that for each fixed $n$, we have:
$$
|\psi_n(x)|\to 0\quad \text{as}\quad |x|\to\infty,\quad x\in I_e.
$$
However, the above only guarantees pointwise decay for each fixed function and does not immediately yield uniform decay in the index $n$.
\par 
We now utilize the strong convergence $\psi_n \to \psi$ in $H^1(\mathcal{G})$. Since $\psi\in H^1(\mathcal{G})$, we also have:
$$
|\psi(x)|\to 0,\quad\text{as }|x|\to\infty.
$$
Given an arbitrary $\varepsilon>0$, due to $\psi\in L^2(\mathcal{G})$, there exists a sufficiently large radius $R>0$, such that:

$$
\int_{|x|\ge R}|\psi(x)|^2\,dx<\frac{\varepsilon^2}{4C^2},
$$
where $C>0$ is the Sobolev embedding constant. Since $\psi_n\to \psi$ strongly in $H^1(\mathcal{G})$, we also have $\psi_n\to\psi$ strongly in $L^2(\mathcal{G})$. Thus, there exists a large enough $N>0$, such that for all $n>N$:
$$
\int_{\mathcal{G}}|\psi_n(x)-\psi(x)|^2\,dx<\frac{\varepsilon^2}{4C^2}.
$$
By the Sobolev embedding, for all $n>N$, and all points $x\in \mathcal{G}$, we obtain the pointwise estimate:
$$
|\psi_n(x)-\psi(x)|\le C\|\psi_n-\psi\|_{H^1}\le C\frac{\varepsilon}{2C}=\frac{\varepsilon}{2}.
$$
Therefore, for all $n>N$ and $|x|\ge R$, we have:
$$
|\psi_n(x)|\le |\psi_n(x)-\psi(x)|+|\psi(x)|\le \frac{\varepsilon}{2}+|\psi(x)|.
$$
Since we already know $|\psi(x)|\to 0$ as $|x|\to\infty$, we can choose an even larger radius $R'>R$ (depending only on $\psi$, not on $n$) so that:
$$
|\psi(x)|<\frac{\varepsilon}{2},\quad\text{for all } |x|\ge R'.
$$
Thus, for all $n>N$ and $|x|\ge R'$:
$$
|\psi_n(x)|\le\frac{\varepsilon}{2}+\frac{\varepsilon}{2}=\varepsilon.
$$
Finally, we note that the finite number of functions $\psi_1,\dots,\psi_N$ individually satisfy pointwise decay to zero at infinity. Therefore, we may select a possibly larger uniform radius $R''\ge R'$ (again independent of $n$) ensuring:
$$
|\psi_n(x)|\le \varepsilon,\quad\text{for all }|x|\ge R'',\quad\text{uniformly in }n.
$$
Since $\varepsilon>0$ was arbitrary, we conclude:
$$
|\psi_n(x)|\to 0,\quad |x|\to\infty,\quad\text{uniformly in } n.
$$
The lemma is proven.
\end{proof}
\begin{Lem}\label{Lem4.9}
 There exists $C, \tilde{C}>0$ such that
$$
\left|\psi_n(x)\right| \leq C e^{-\tilde{C}|x|}, \quad \forall x \in \mathcal{G}
$$
uniformly in $n \in \mathbb{N}$.
\end{Lem}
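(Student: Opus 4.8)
The plan is to collapse the first-order Dirac system into a single scalar second-order differential inequality for the modulus $|\psi_n|$ whose effective mass is exactly the positive constant $a_n$, and then to compare $|\psi_n|$ against an exponential by a maximum principle, keeping all constants uniform in $n$.

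First I would square the Dirac operator. Since $\sigma_1^2=\sigma_3^2=I$ and $\sigma_1\sigma_3+\sigma_3\sigma_1=0$, one has $\mathcal{D}_n^2=-c_n^2\frac{d^2}{dx^2}+m^2c_n^4$. Applying $\mathcal{D}_n$ to \eqref{4.2}, written as $\mathcal{D}_n\psi_n=h_n\psi_n$ with the real scalar $h_n:=\omega_n+g(|\psi_n|)$, and using $\mathcal{D}_n(h_n\psi_n)=-ic_n h_n'\sigma_1\psi_n+h_n^2\psi_n$, I divide by $c_n^2$ and exploit the identity $m^2c_n^2-\omega_n^2/c_n^2=(mc_n^2-\omega_n)b_n=a_n$ to arrive at
$$
-\psi_n''+a_n\psi_n=-\frac{i}{c_n}g_n'\,\sigma_1\psi_n+\frac{2\omega_n g_n+g_n^2}{c_n^2}\psi_n,\qquad g_n:=g(|\psi_n|).
$$

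Next I would apply Kato's inequality $|\psi_n|''\ge\mathrm{Re}\langle\psi_n/|\psi_n|,\psi_n''\rangle$ in the distributional sense (legitimate because, by elliptic bootstrap on \eqref{4.2} and Lemma \ref{Lem4.7}, $\psi_n$ is smooth on each edge). The decisive point is that the troublesome first-order term vanishes upon taking the real part: since $\sigma_1$ is Hermitian, $\langle\psi_n,\sigma_1\psi_n\rangle\in\mathbb{R}$, hence $\mathrm{Re}\langle\psi_n/|\psi_n|,-\tfrac{i}{c_n}g_n'\sigma_1\psi_n\rangle=0$. This yields
$$
|\psi_n|''\ge\Big(a_n-\frac{(2\omega_n+g_n)g_n}{c_n^2}\Big)|\psi_n|.
$$
Because $mc_n^2-\omega_n>0$ and $b_n>0$ we have $a_n>0$ for every $n$, and $a_n\to-\nu>0$ by \eqref{4.10}, so $a_0:=\inf_n a_n>0$. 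As $2\omega_n/c_n^2\to 2m$ is bounded and, by Lemma \ref{Lem4.8} together with $(g_2)$, $g_n\to0$ uniformly in $n$ as $|x|\to\infty$, I can fix $R>0$ independent of $n$ so that the bracket exceeds $a_0/2$ for all $|x|\ge R$. Setting $\mu:=\sqrt{a_0/2}$ gives $|\psi_n|''\ge\mu^2|\psi_n|$ on each half-line beyond radius $R$.

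Finally I would compare $w_n:=|\psi_n|$ with an exponential on each of the finitely many half-lines. With $M_0:=\sup_n\|\psi_n\|_{L^\infty}<\infty$ from Lemma \ref{Lem4.7}, the function $z:=w_n-M_0e^{-\mu(|x|-R)}$ is a distributional subsolution of $z''-\mu^2 z\ge0$ with $z(R)\le0$ and $z\to0$ at infinity (Lemma \ref{Lem4.8}); the maximum principle then forces $z\le0$, i.e. $|\psi_n(x)|\le M_0 e^{\mu R}e^{-\mu|x|}$ for $|x|\ge R$. On the compact core and on $\{|x|<R\}$ the estimate holds trivially after enlarging the constant, and taking the maximum over the finitely many edges produces $C,\tilde C>0$ independent of $n$ with $|\psi_n(x)|\le Ce^{-\tilde C|x|}$. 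The main obstacle I expect is the uniform-in-$n$ bookkeeping: justifying Kato's inequality distributionally (so no nonvanishing hypothesis on $\psi_n$ is needed) and extracting the uniform radius $R$ and uniform gap $a_0$ from the convergences \eqref{4.9}--\eqref{4.10} and the uniform decay in Lemma \ref{Lem4.8}.
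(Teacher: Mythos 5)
Your proposal is correct and follows essentially the same route as the paper: square the Dirac equation to get a second-order identity with effective mass $a_n=\frac{m^2c_n^4-\omega_n^2}{c_n^2}$, kill the first-order term via Kato's inequality using that $\langle\psi_n,\sigma_1\psi_n\rangle\in\mathbb{R}$, extract a uniform radius $R$ from Lemma \ref{Lem4.8}, and conclude by comparison with an exponential supersolution and the maximum principle. If anything, your bookkeeping of the perturbation as $\frac{(2\omega_n+g_n)g_n}{c_n^2}$ (using the boundedness of $\omega_n/c_n^2$ rather than smallness of $2\omega_n g_n$ itself) is slightly more careful than the paper's.
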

\begin{proof}
Setting $\mathcal{M}:=-i \frac{d}{dx}\sigma_1$, $\psi_n$ solves \eqref{2.2}, that is
\begin{equation}\label{4.37}
	\mathcal{M} \psi_n=-m c_n\sigma_3 \psi_n+\frac{\omega_n}{c_n} \psi_n+\frac{1}{c_n}g(|\psi_n|) \psi_n .
\end{equation}
Noting that 
\begin{align*}
	\mathcal{M}^2
	= (-i \frac{d}{dx}\sigma_1) (-i \frac{d}{dx}\sigma_1) 
	= - \frac{d^2}{dx^2} .
\end{align*}
Acting the operator $\mathcal{M}$ on the two sides of \eqref{4.37} we get
\begin{align*}
	- \psi_n^{\prime\prime} 
	=& \mathcal{M} \left(-m c_n \sigma_3\psi_n+\frac{\omega_n}{c_n} \psi_n+\frac{1}{c_n}g(|\psi_n|) \psi_n \right )\\
	= & -m c_n\left(-i \frac{d}{dx} \sigma_1\right)\sigma_3 \psi_n 
	+\frac{\omega_n}{c_n} \mathcal{M}\psi_n
	+\frac{1}{c_n}\mathcal{M}(g(|\psi_n|) \psi_n)\\
	= & -m c_n\left( i \frac{d}{dx} \sigma_3\sigma_1 \right)\psi_n 
	+\frac{\omega_n}{c_n} \mathcal{M}\psi_n
	+\frac{1}{c_n}\left(\mathcal{M}g(|\psi_n|)\psi_n+g(|\psi_n|) \mathcal{M} \psi_n\right)\\
	= & m c_n \sigma_3 \mathcal{M} \psi_n
	+\frac{\omega_n}{c_n} \mathcal{M}\psi_n
	+\frac{1}{c_n}\left(\mathcal{M}g(|\psi_n|)\psi_n+g(|\psi_n|)\mathcal{M} \psi_n\right)\\
	= &  \left(m c_n \sigma_3
	+\frac{\omega_n}{c_n} 
	+\frac{1}{c_n}g(|\psi_n|)\right)\mathcal{M} \psi_n
	+\frac{1}{c_n} \mathcal{M}g(|\psi_n|) \psi_n  \\
	= & \left(m c_n  \sigma_3  +\frac{\omega_n}{c_n}  +\frac{1}{c_n}g(|\psi_n|)\right)
	\left(-m c_n \sigma_3 \psi_n+\frac{\omega_n}{c_n} \psi_n+\frac{1}{c_n}g(|\psi_n|) \psi_n\right)
	+\frac{1}{c_n} \mathcal{M}g(|\psi_n|)\psi_n           \\
	= &  -m^2 c_n^2 \psi_n  +   \left(\frac{\omega_n}{c_n}+\frac{1}{c_n}g(|\psi_n|)\right)^2 \psi_n
	+\frac{1}{c_n} \mathcal{M}g(|\psi_n|)\psi_n          .
\end{align*}
Firstly, we claim that
$$
Re\left(\mathcal{M}g(|\psi_n|) \psi_n \frac{\bar{\psi}_n}{\left|\psi_n\right|}\right)=0 .
$$
Indeed, we recall that $\left(\sigma_1 \psi_n, \bar{\psi}_n\right\rangle \in \mathbb{R}$. Thus,
$$
\begin{aligned}
	\mathcal{M}g(|\psi_n|) \psi_n \frac{\bar{\psi}_n}{\left|\psi_n\right|} 
	& =-i \frac{d}{dx}\sigma_1 \left(g(|\psi_n|)\right) \psi_n \frac{\bar{\psi}_n}{\left|\psi_n\right|} \\
	& =-i \frac{d}{dx}\left(g(|\psi_n|)\right) \frac{\left\langle\sigma_1 \psi_n, \bar{\psi}_n\right\rangle}{\left|\psi_n\right|} \in \mathbb{R}.
\end{aligned}
$$
By Kato's inequality \cite{MR1064315}, there holds
$$
\left|\psi_n^{\prime\prime} \right| \geq Re \left[ \psi_n^{\prime\prime} \left(\operatorname{sgn} \psi_n\right)\right],
$$
where $\operatorname{sgn} \psi_n=\frac{\psi_n}{|\psi_n|}$, if $\psi_n \neq 0 ; \operatorname{sgn} \psi_n=0$, if $\psi_n=0$. Therefore,
\begin{align*}
	|\psi_n^{\prime\prime} | 
	\geq& Re \left[\psi_n^{\prime\prime} (\operatorname{sgn} \psi_n)\right] \\
	= & Re \left[ m^2 c_n^2 \psi_n(\operatorname{sgn} \psi_n)  
	- \left(\frac{\omega_n}{c_n}+\frac{1}{c_n}g(|\psi_n|)\right)^2 \psi_n(\operatorname{sgn} \psi_n)
	-\frac{1}{c_n} \mathcal{M}g(|\psi_n|) \psi_n (\operatorname{sgn} \psi_n)    \right]\\
	= &   m^2 c_n^2 |\psi_n|  
	- \left(\frac{\omega_n}{c_n}+\frac{1}{c_n}g(|\psi_n|)\right)^2 |\psi_n|
	-Re \left[  \frac{1}{c_n} \mathcal{M}g(|\psi_n|) |\psi_n|  \right]\\
	= &  m^2 c_n^2 |\psi_n|  
	- \left(\frac{\omega_n}{c_n}+\frac{1}{c_n}g(|\psi_n|)\right)^2 |\psi_n|\\
	= &  \frac{m^2 c_n^4-\omega_n^2}{c_n^2} |\psi_n|-\frac{1}{c_n^2}\left( g(|\psi_n|)^2 +2\omega_ng(|\psi_n|) \right)|\psi_n|            .
\end{align*}
\par 
It follows from Lemma \ref{Lem4.8} that for any $\varepsilon>0$, there exists $R>0$, independent of $n$, such that if $|x|\geq R$, one has uniformly:
$$
2\omega_n g(|\psi_n|) + g(|\psi_n|)^2 <\varepsilon.
$$
Hence, there exists a uniform constant $\tau>0$ such that for all $|x|\geq R$
\begin{equation*}
|\psi_n''(x)|\geq \frac{1}{c_n^2 }(m^2 c_n^4-\omega_n^2-\varepsilon )|\psi_n(x)|\geq\tau|\psi_n(x)|.
\end{equation*}
For sufficiently small $\varepsilon$, the constant $\tau>0$ can be chosen independent of $n$.
\par 
Define $\Gamma(x)$ as a fundamental solution of
$$
-\Gamma''+\tau\Gamma=0,\quad x\in\mathbb{R},\quad\Gamma(x)=Ce^{-\sqrt{\tau}|x|},\quad\sqrt{\tau}>0.
$$
Due to uniform boundedness of $\psi_n$ on the boundary $\{|x|=R\}$, we choose the constant $C>0$ large enough so that
$$
|\psi_n(x)|\leq \tau\Gamma(x)=\tau Ce^{-\sqrt{\tau}|x|},\quad |x|=R,\quad\forall n\in\mathbb{N}.
$$
Define
$$
z_n(x)=|\psi_n(x)|-\tau\Gamma(x).
$$
By construction, on $|x|=R$, we have
$$
z_n(x)\leq 0.
$$
Moreover, we have
$$
z_n''(x)=|\psi_n''(x)|-\tau\Gamma''(x)\geq\tau|\psi_n(x)|-\tau^2\Gamma(x)=\tau z_n(x).
$$
Applying the standard maximum principle (since $z_n$ tends to zero at infinity), we conclude:
$$
z_n(x)\leq 0,\quad\forall|x|\geq R,\quad n\in\mathbb{N}.
$$
Hence, it follows that for all $|x|\geq R$:
$$
|\psi_n(x)|\leq \tau Ce^{-\sqrt{\tau}|x|}.
$$
\par 
Since the graph $\mathcal{G}$ is finite or countable union of edges, each edge admits the above exponential decay estimate uniformly in $n$. For the compact region $|x|\le R$, we trivially have uniform boundedness. Combining both cases, we conclude the existence of constants $C,\tilde{C}>0$ independent of $n$ such that:
$$
|\psi_n(x)|\leq C e^{-\tilde{C}|x|},\quad\forall x\in\mathcal{G},\quad\forall n\in\mathbb{N}.
$$
This completes the proof.
\end{proof}

\section*{Acknowledgment}
We express our gratitude to the anonymous referee for their meticulous review of our manuscript and valuable feedback provided for its enhancement. 
This work is supported by National Natural Science Foundation of China (12301145,12561020,12261107) and Yunnan Fundamental Research Projects (202301AU070144, 202401AU070123,202301AU070159). M. Ruzhansky is also supported by FWO Odysseus 1 Grant G.0H94.18N: Analysis and Partial Differential Equations and the Methusalem programme of the Ghent University Special Research Fund (BOF) (Grant Number 01M01021). 

{\bf Data availability:}  Data sharing is not applicable to this article as no new data were created or analyzed in this study.

{\bf Conflict of Interests:} The author declares that there is no conflict of interest.

\bibliographystyle{plain}
\bibliography{reference}

\begin{thebibliography}{10}

\bibitem{MR3385179}
R.~Adami, E.~Serra, and P.~Tilli.
\newblock N{LS} ground states on graphs.
\newblock {\em Calc. Var. Partial Differential Equations}, 54(1):743--761,
  2015.

\bibitem{MR3494248}
R.~Adami, E.~Serra, and P.~Tilli.
\newblock Threshold phenomena and existence results for {NLS} ground states on
  metric graphs.
\newblock {\em J. Funct. Anal.}, 271(1):201--223, 2016.

\bibitem{MR2232435}
T.~Bartsch and Y.~Ding.
\newblock Solutions of nonlinear {D}irac equations.
\newblock {\em J. Differential Equations}, 226(1):210--249, 2006.

\bibitem{MR4182313}
T.~Bartsch and T.~Xu.
\newblock Strongly localized semiclassical states for nonlinear {D}irac
  equations.
\newblock {\em Discrete Contin. Dyn. Syst.}, 41(1):29--60, 2021.

\bibitem{MR786909}
H.~Berestycki and P.-L. Lions.
\newblock Th\'eorie des points critiques et instabilit\'e{} des ondes
  stationnaires pour des \'equations de {S}chr\"odinger non lin\'eaires.
\newblock {\em C. R. Acad. Sci. Paris S\'er. I Math.}, 300(10):319--322, 1985.

\bibitem{MR3013208}
G.~Berkolaiko and P.~Kuchment.
\newblock {\em Introduction to quantum graphs}, volume 186 of {\em Mathematical
  Surveys and Monographs}.
\newblock American Mathematical Society, Providence, RI, 2013.

\bibitem{MR187642}
J.~Bjorken and S.~Drell.
\newblock {\em Relativistic quantum fields}.
\newblock McGraw-Hill Book Co., New York-Toronto-London-Sydney, 1965.

\bibitem{MR4414162}
W.~Borrelli.
\newblock Symmetric solutions for a 2{D} critical {D}irac equation.
\newblock {\em Commun. Contemp. Math.}, 24(4):Paper No. 2150019, 27, 2022.

\bibitem{MR3934110}
W.~Borrelli, R.~Carlone, and L.~Tentarelli.
\newblock Nonlinear {D}irac equation on graphs with localized nonlinearities:
  bound states and nonrelativistic limit.
\newblock {\em SIAM J. Math. Anal.}, 51(2):1046--1081, 2019.

\bibitem{MR4200758}
W.~Borrelli, R.~Carlone, and L.~Tentarelli.
\newblock On the nonlinear {D}irac equation on noncompact metric graphs.
\newblock {\em J. Differential Equations}, 278:326--357, 2021.

\bibitem{MR1050469}
W.~Bulla and T.~Trenkler.
\newblock The free {D}irac operator on compact and noncompact graphs.
\newblock {\em J. Math. Phys.}, 31(5):1157--1163, 1990.

\bibitem{MR4728777}
P.~Chen, Y.~Ding, Q.~Guo, and H.~Wang.
\newblock Nonrelativistic limit of normalized solutions to a class of nonlinear
  {D}irac equations.
\newblock {\em Calc. Var. Partial Differential Equations}, 63(4):Paper No. 90,
  29, 2024.

\bibitem{MR4799303}
V.~Coti~Zelati and M.~Nolasco.
\newblock Normalized solutions for a nonlinear {D}irac equation.
\newblock {\em J. Differential Equations}, 414:746--772, 2025.

\bibitem{MR1064315}
R.~Dautray and J.~Lions.
\newblock {\em Mathematical analysis and numerical methods for science and
  technology. {V}ol. 3}.
\newblock Springer-Verlag, Berlin, 1990.
\newblock Spectral theory and applications, With the collaboration of Michel
  Artola and Michel Cessenat, Translated from the French by John C. Amson.

\bibitem{MR2389415}
Y.~Ding.
\newblock {\em Variational methods for strongly indefinite problems}, volume~7
  of {\em Interdisciplinary Mathematical Sciences}.
\newblock World Scientific Publishing Co. Pte. Ltd., Hackensack, NJ, 2007.

\bibitem{MR4280523}
Y.~Ding, X.~Dong, and Q.~Guo.
\newblock Nonrelativistic limit and some properties of solutions for nonlinear
  {D}irac equations.
\newblock {\em Calc. Var. Partial Differential Equations}, 60(4):Paper No. 144,
  23, 2021.

\bibitem{MR2434900}
Y.~Ding and B.~Ruf.
\newblock Solutions of a nonlinear {D}irac equation with external fields.
\newblock {\em Arch. Ration. Mech. Anal.}, 190(1):57--82, 2008.

\bibitem{MR3023429}
Y.~Ding and B.~Ruf.
\newblock Existence and concentration of semiclassical solutions for {D}irac
  equations with critical nonlinearities.
\newblock {\em SIAM J. Math. Anal.}, 44(6):3755--3785, 2012.

\bibitem{MR4523529}
Y.~Ding, Y.~Yu, and F.~Zhao.
\newblock {$L^2$}-normalized solitary wave solutions of a nonlinear {D}irac
  equation.
\newblock {\em J. Geom. Anal.}, 33(2):Paper No. 69, 25, 2023.

\bibitem{MR4610809}
X.~Dong, Y.~Ding, and Q.~Guo.
\newblock Nonrelativistic limit and nonexistence of stationary solutions of
  nonlinear {D}irac equations.
\newblock {\em J. Differential Equations}, 372:161--193, 2023.

\bibitem{MR1344729}
M.J. Esteban and \'E. S\'er\'e.
\newblock Stationary states of the nonlinear {D}irac equation: a variational
  approach.
\newblock {\em Comm. Math. Phys.}, 171(2):323--350, 1995.

\bibitem{MR1869528}
M.J. Esteban and E.~S\'er\'e.
\newblock Nonrelativistic limit of the {D}irac-{F}ock equations.
\newblock {\em Ann. Henri Poincar\'e}, 2(5):941--961, 2001.

\bibitem{MR1897689}
M.J. Esteban and E.~S\'er\'e.
\newblock An overview on linear and nonlinear {D}irac equations.
\newblock volume~8, pages 381--397. 2002.
\newblock Current developments in partial differential equations (Temuco,
  1999).

\bibitem{MR4438617}
A.~Kairzhan, D.~Noja, and D.~Pelinovsky.
\newblock Standing waves on quantum graphs.
\newblock {\em J. Phys. A}, 55(24):Paper No. 243001, 51, 2022.

\bibitem{MR2042548}
P.~Kuchment.
\newblock Quantum graphs. {I}. {S}ome basic structures.
\newblock volume~14, pages S107--S128. 2004.
\newblock Special section on quantum graphs.

\bibitem{MR2459887}
O.~Post.
\newblock Equilateral quantum graphs and boundary triples.
\newblock In {\em Analysis on graphs and its applications}, volume~77 of {\em
  Proc. Sympos. Pure Math.}, pages 469--490. Amer. Math. Soc., Providence, RI,
  2008.

\bibitem{MR3456809}
E.~Serra and L.~Tentarelli.
\newblock Bound states of the {NLS} equation on metric graphs with localized
  nonlinearities.
\newblock {\em J. Differential Equations}, 260(7):5627--5644, 2016.

\bibitem{MR2431434}
M.~Struwe.
\newblock {\em Variational methods}, volume~34 of {\em Results in Mathematics
  and Related Areas. 3rd Series. A Series of Modern Surveys in Mathematics}.
\newblock Springer-Verlag, Berlin, fourth edition, 2008.
\newblock Applications to nonlinear partial differential equations and
  Hamiltonian systems.

\bibitem{MR4290374}
Z.~Wang and X.~Zhang.
\newblock Semiclassical states for nonlinear {D}irac equations with singular
  potentials.
\newblock {\em Calc. Var. Partial Differential Equations}, 60(5):Paper No. 161,
  29, 2021.

\bibitem{Yang-Zhu}
Z.~Yang and L.~Zhu.
\newblock Bound states of nonlinear dirac equation on noncompact quantum graphs
  with localized nonlinearities.
\newblock {\em arXiv}, 2505.02036, 2025.

\bibitem{MR2468430}
F.~Zhao and Y.~Ding.
\newblock Infinitely many solutions for a class of nonlinear {D}irac equations
  without symmetry.
\newblock {\em Nonlinear Anal.}, 70(2):921--935, 2009.

\end{thebibliography}

\end{document}